\newcommand\op{\operatorname{op}}
\newcommand\Set{\operatorname{\bf Set}}
\newcommand{\id}{\operatorname{id}}
\newcommand{\Id}{\operatorname{Id}}
\newcommand\colim{\operatorname{colim}}
\newcommand{\Met}{\operatorname{\bf Met}}
\newcommand{\CMet}{\operatorname{\bf CMet}}
\newcommand{\PMet}{\operatorname{\bf PMet}}
\newcommand{\Lan}{\operatorname{\bf Lan}}
\newcommand{\Mnd}{\operatorname{\bf Mnd}}  
\newcommand\ck{\mathcal {K}}
\newcommand\ca{\mathcal {A}}
\newcommand\cb{\mathcal {B}}
\newcommand\cd{\mathcal {D}} %{\mathscr{D}}
\newcommand\ce{\mathcal {E}}
\newcommand\cv{\mathcal {V}}
\newcommand\cw{\mathcal {W}}
\newcommand\ch{\mathcal {H}}
\newcommand\N{\mathbb{N}}
\newcommand\var{\varepsilon}
\newtheorem{theorem}{Theorem}[section]
\newtheorem{lemma}[theorem]{Lemma}
\newtheorem{birk}[theorem]{Quantitative Birkhoff Variety Theorem}
\newtheorem{open}[theorem]{Open problem}
\newtheorem{prop}[theorem]{Proposition}
\newtheorem{coro}[theorem]{Corollary}
\theoremstyle{definition}
\newtheorem{defi}[theorem]{Definition}
\newtheorem*{defini}{Definition}
\newtheorem{assump}[theorem]{Assumption}
\newtheorem{example}[theorem]{Example}
\newtheorem*{exam}{Example}
\newtheorem{exs}[theorem]{Examples}
\newtheorem{remark}[theorem]{Remark}
\newtheorem{constr}[theorem]{Construction}
\newtheorem{nota}[theorem]{Notation}
\numberwithin{equation}{section}
\begin{document}

%\begin{document}
	%zkraceno zahlavi
	\title[Strongly finitary monads are too strong]{Strongly finitary metric monads are too strong}
	
	\author[J. Ad{\'{a}}mek]{Ji{\v{r}}{\'{\i}} Ad{\'{a}}mek%$^{^{\ast)}}$
}
	\dedicatory{\rm Czech Technical University in Prague\\
		Technical University in Braunschweig}
		
		\begin{abstract}
		Varieties of quantitative algebras are fully described by their  free-algebra monads on the category $\Met$ of metric spaces. For a longer time it has been  an open  problem whether the resulting enriched monads are precisely the strongly finitary ones (determined by their values on finite discrete spaces). We present  a counter-example: the variety of algebras on two $\var$-close  binary operations  yields a monad which is not strongly finitary. A full characterization of free-algebra monads  of varieties is presented: they are the \emph{1-basic} monads, i.e., weighted colimits of strongly finitary monads (in the category of finitary  monads).

 We deduce that strongly finitary endofunctors on $\Met$ are not closed under composition.
\end{abstract}

\maketitle
\section{Introduction}
\emph{Quantitative algebras}, which are algebras acting on metric spaces 
with nonexpansive operations, were introduced by Mardare, Panangaden and Plotkin \cite{MPP16}, \cite{MPP17} as a foundation  for  semantics of probabilistic or stochastic systems.  A basic tool for presenting classes of quantitative algebras are $c$-basic quantitative equations for a cardinal number $c$. We concentrate on the case $c=1$: these equations have  the form $t=_{\var} t'$ for terms $t$, $t'$ and a real number $\var\geq 0$.
An algebra satisfies that equation iff every computation of the terms $t$ and $t'$ yields results of distance at most $\var$.
 A \emph{variety} of quantitative algebras is a class presented by a set of $1$-basic quantitative equations. A prominent example in loc. cit. is the variety of quantitative semilattices.

Every variety $\cv$ has free algebras on all metric spaces. It thus yields a monad $T_{\cv}$ on the category $\Met$ of metric spaces. Moreover, $\cv$ is  isomorphic to the corresponding category $\Met^{T_{\cv}}$ of Eilenberg-Moore algebras. (Example: for  quantitative semilattices $T_{\cv} X$ is the finite power-set endowed with the Hausdorff metric.) It has been an open problem for some time to characterize monads of the form $T_{\cv}$, see e.g. \cite{MG}, \cite{LP},  \cite{R}  or \cite{ADV}.
All  strongly finitary monads of Kelly and Lack (\ref{D:quant} below) are of that form (Theorem \ref{T:adv}). One could expect that, conversely, every free-algebra monad of a variety is strongly finitary.
Indeed,  the analogous statement
 is true for a number of other  basic categories, e.g.\ sets, ultrametric spaces \cite{ADV2} or posets \cite{KV}. But $\Met$  is an exception: in Section~7 we present a simple variety $\cv$ such that $T_\cv$ is not strongly finitary: it consists of two binary operations which are $\var$-close.

%We form the category $\Mnd _f(\Met)$ of enriched finitary monads on $\Met$, %and call a \emph{semi-strongly finitary} if it is a weighted colimit of %strongly finitary monads. Our main result is the following.

\begin{defini} 
A monad on $\Met$ is \emph{1-basic} if it is a weighted colimit of strongly finitary monads  in $\Mnd _f(\Met)$, the category of enriched finitary monads on $\Met$.
\end{defini}

Our main result in Section 5 is the following

\begin{theorem} Monads of the form $T_{\cv}$ are precisely the 1-basic ones.
\end{theorem}

\begin{coro} Strongly finitary endofunctors in $\Met$ are not closed under composition.

\end{coro}
Indeed, Bourke and Garner introduced in \cite{BG} saturated classes of arities and proved in Theorem  43 a result that implies that, in case strongly finitary functors compose (in their terminology: all finite discrete spaces form  a saturated class of arities in $\Met$) the free-algebra monads of varieties of quantitative algebras  are precisely the strongly finitary ones.

In the work of Mardare et al also \emph{complete} quantitative algebras play an important role. These are the quantitative algebras in the category $\CMet$ of complete metric spaces. For example, the variety of semilattice in this setting yields the
Hausdorff monad on $\CMet$ (of all compact subsets). An analogous result holds: varieties of complete quantitative algebras correspond precisely to  the weighted colimits of strongly finitary  monads.  

A number  of important cases actually yield  strongly finitary monads, e.g. the Hausdorff monad.
Moreover, all monads $T_\cv$ of varieties $\cv$ of unary algebras 
%$\CMet$. Thus it is presented  by ordinary equation (where $\var=0$). We prove that all monad %$T_\cv $ where $\cv $ is presented by ordinary equations, or where all operations are unary,
 are strongly finitary (Section 6).

\vskip 2 mm
\noindent \textbf{Related Work.} We have announced our example of a variety $\cv$ not having a strongly finitary free-algebra monad in July 2025 at the conference  CT25 \cite{Ada}. Independently, a similar example has been annouced by Mardare et al.\ \cite{MGR}, Example 8.3, in September 2025 at the conference GandALF 2025.

More general quantitative equations were used in the pioneering paper of Mardare, Panangaden and Plotkin  \cite{MPP16}:
the \emph {$c$-basic} equations for a cardinal number $c$.  They have the form $t=_\var t'$, where $t$ and $t'$ are elements of the free algebra $T_\Sigma V$ on a space $V$ (of variables) of power less than $c$. The corresponding monads
are called \emph{$c$-basic monads} in \cite{A}. The $\aleph_0$-basic monads restricted to ultrametric spaces were characterized as precisely the monads preserving surjective morphisms and directed colimits of split monomorphisms. It is an open problem whether this characterization also holds for $\aleph_0$-basic monads on $\Met$ or $\CMet$.

The unpublished paper \cite{ADV} with the extended abstract \cite{ADV2} contains some incomplete arguments. 
The co-authors unfortunately do not intend publishing a revised version. This leads us to providing new (and, as it happens, simpler) proofs in Sections 4 and 5 below. We also repeat some of the introductory material of \cite{ADV} in Section 2.

An alternative approach to varieties of quantitative algebras is presented by J. Rosick\'{y} \cite{R}, % \cite{R2}, 
who uses algebraic theories. 
\vspace{5 mm}

\textbf{Acknowledgements.}
The author is grateful to M. Dost\'{a}l, J. Rosic\-k\'{y}, H. Urbat, and J. Velebil for fruitful  discussions that helped to improve
the presentation of our paper.

\section{Finitary and Strongly Finitary Functors}\label{sec2}

We work  here with  the monoidal closed categories $\Met$ of metric spaces. Finitary and strongly finitary endofunctors, substantially used in subsequent sections, are discussed. 

\begin{nota} %2.1
$\Met$ denotes the category  of (extended) metric spaces. Objects are metric spaces extended in the sense that the distance $\infty$ is allowed. Morphisms are nonexpansive functions $f\colon X\to Y$: for all $x$, $x'\in X$ we have $d(x, x') \geq d\big(f(x), f(x')\big)$.

$\CMet$ is the full subcategory of complete spaces: every Cauchy sequence converges.

%The underlying set of a space $X$ is denoted by $|X|$.
 
%Every set is considered to be the \emph{discrete} space: all non-zero distances are $\infty$.
 
\end{nota}
 
\begin{remark}\label{R:closed}  %2.2
(1) $\Met$ is a symmetric monoidal closed category, where the tensor
  product
$$
X\otimes Y
$$
is the cartesian product with the  \emph{sum metric}:
$$
d\big((x,y), (x', y')\big) = d(x,x') + d(y,y')\,.
$$
In contrast, the categorical product $X\times Y$ is the cartesian product with the \emph{maximum metric}: the maximum of $d(x,x')$ and $d(y, y')$. By the \emph{power} $X^n$ of a space $X$ $(n \in \Bbb{N}$) we always mean the categorical product (with the maximum metric).

 The monoidal unit $I$ is a singleton space. The hom-space
$$
[X,Y]
$$
is the space of all morphisms $f\colon X\to Y$ with the \emph{supremum metric}
$$
d(f, f') =\sup_{x\in X} d\big(f(x), f'(x)\big) \quad \mbox{for} \quad f, f'\colon X\to Y\,.
$$

(2) A $\Met$-\emph{enriched} (or just enriched) \emph{category} is a category with a metric on every hom-set making composition nonexpanding (with respect to the addition metric). A ($\Met$)-\emph{enriched functor} $F$ between  enriched categories is a functor which is locally nonexpanding: for all parallel pairs $f$, $g$ in the domain category  we have  $d(Ff, Fg) \leq d(f,g)$. Enriched natural transformations are the ordinary ones (among enriched functors).
Thus, enriched monads are those with the enriched underlying endofunctor. This follows from $\Met(I, -)$
being naturally isomorphic to $\Id$.
 
% (3) All of the above has an obvious analogy for complete metric spaces: if $X$ and $Y$ are complete,  then so are $X\otimes Y$ and $[X,Y]$. $\CMet$-enriched categories  and functors  are defined as above.
% 
% When speaking about \emph{enriched categories} we always mean enriched over either $\Met$ or $\CMet$. Analogously for enriched functors. Where necessary, 
% we distinguish  the two cases explicitly. But usually (except concrete examples) the arguments for $\Met$ and $\CMet$ are the same.

(3) Given  enriched categories $\ca$ and $\cb$, the category $[\ca, \cb]$ of all enriched functors $F\colon \ca \to \cb$ and natural transformations is enriched: the distance of natural transformations $\varphi, \psi \colon F\to F'$ is $d\varphi, \psi ) = \sup\limits_{ X\in \ca} d(\varphi_X, \psi_X)$.
 %%%%%%%%%%%%%%%%%%%%%
 \end{remark}
 
 \begin{exam}
 	(1) The category $\Mnd(\Met)$ of enriched monads on $\Met$ is enriched: the distance of parallel monad morphisms is the supremum of the distances of their components. 

 	(2) Every full subcategory of an enriched category is ennriched in the expected sense. We will particularly often work with the full subcategory
 	$$\CMet$$ 
 of complete spaces  in $\Met$.
 \end{exam}
 
 \begin{nota}\label{N:under}
(1) For every metric space $X$  we denote by $|X|$ its underlying set.

Conversely, every set is considered as the \emph{discrete space}: all non-zero distances are $\infty$. (This space  is complete.)   For every space $X$ we thus have a morphism
$$
i_X \colon |X| \to X \quad \mbox{in} \ \Met 
$$
carried by the identity.

(2) Every natural number $n$ is considered to be the set $\{0, \dots, n-1\}$.

\end{nota}

\begin{remark} \label{R:Cauchy}
	(1) The subcategory $\CMet$ is reflective in $\Met$. The reflection of a space $X$ is its \emph{Cauchy completion} $X^*$: a  complete space containing $X$ as a dense subspace. Indeed, given a nonexpanding map $f$ from $X$ to a complete space $Y$, its unique continuous extension $f^* \colon X^* \to Y$ is easily seen to be nonexpanding.

(2) Epimorphisms $e \colon X \to Y$ in $\Met$ and $\CMet$ are precisely the \emph{dense}
morphisms: those whose image is a dense subspace of $Y$.

(3) Directed colimits (indexed by directed posets) 
in the categories $\Met$ and $\CMet$ are not set-based.  Consider for example the $\omega$-chain of spaces $M_n =\{a, b\}$  with $d(a,b)=2^{-n}$ (and   $\id$ as connecting maps). Then $\colim M_n$ is a singleton space.
 A concrete description of directed colimits can be found in the Appendix.

\end {remark}

\begin{defi}\label{D:fin}
An endofunctor is \emph{finitary} if it preserves  directed colimits.
A monad is finitary if its underlying functor is.
\end{defi}

\begin{example}\label{E:pol}
(1) The endofunctor $(-)^n$ of the $n$-th categorical power is finitary on $\Met$ as well as $\CMet$ (for every $n\in \N$). This follows easily from Propositions %\ref{P:dir}.
A1 and A2 in the Appendix.

(2) %Coproducts in $\Met$ and $\CMet$ are disjoint unions with distance $\infty$ between elements of distinct summands.
 A coproduct of finitary functors is finitary: coproducts commute with colimits.

(3) For every (complete) metric space $M$ the endofunctor  $M\times $- on $\Met$ or $\CMet$ is finitary.

(4) The \emph{Hausdorff endofunctor} $\mathcal{H} \colon \CMet \to \CMet$ is  finitary \cite{AMMU}, Example 3.13. It assigns to a space $X$ the space $\mathcal{H}X$ of all compact subsets with the Hausdorff metric
$$
d_{\mathcal{H}} (A,B) =\max\big\{ \sup_{a\in A} d(a, B), \sup_{b\in B} d(b, A)\big\}\,,
$$
where the distance  $d(a,B)$ is (as usual) $\inf_{b\in B} d(a,b)$.
(In particular,  $d_{\mathcal{H}}(A, \emptyset)= \infty$  if $A\ne \emptyset$.) On morphisms $f\colon X\to Y$ the Hausdorff functor  is given by $A\mapsto f[A]$.
%The finitarity of $\ch$ is proved in \cite{AMMU}, Example 3.13.
\end{example}

 Let $\ca$ be an  enriched category with a full enriched subcategory $K \colon \ca_0 \hookrightarrow \ca$. We recall the concept of (enriched) Kan extension. Suppose that the restriction functor
$$
K \cdot (-) \colon  [\ca, \ca] \to [\ca_0, \ca]
$$
has an enriched  left adjoint. Then this adjoint  is denoted by
$$
F \mapsto \Lan_K F\colon \ca \to \ca \qquad (\mbox{for} \ F\colon \ca_0 \to \ca),
$$
and is called the \emph{left Kan extension} along $K$. Thus $T=\Lan_K F$ is an enriched endofunctor equipped with a natural transformation $\tau \colon F \to TK$ having the obvious universal property.
%\end{remark}

\begin{nota} %%2.6
The full embedding of the category of finite sets (or   finite discrete spaces) is denoted by
$$
K\colon \Set_f \to \Met \quad \mbox{or}\quad K\colon \Set_f\hookrightarrow \CMet\,.
$$
\end{nota}

The following definition is analogous  to the definition presented by Kelly and Lack \cite{KL} for locally finitely presentable categories.

\begin{defi}\label{D:quant} %%% 2.6
An enriched endofunctor $T$ on $\Met$ or $\CMet$ is \emph{strong\-ly finitary} if it is obtained from its restriction $TK$ to finite discrete spaces via the left Kan extension:
$$
T =\Lan_K TK\,.
$$
 A monad is strongly finitary if its underlying endofunctor is.
\end{defi}

\begin{remark} \label{R:adv}
	Every strongly finitary monad is finitary (see the next theorem), but not conversely (Proposition \ref{P:contra}).

We now recall a condition characterizing strong finitarity, proved in \cite{ADV}.
%(Propositions 2.20 and 2.22 ). %for $\U\Met$, also works for $\Met$ and  $\CMet$. The proof is the same.
We first need  the following
\end{remark}

\begin{nota}\label{N:comp}
For every metric space $X$ and every $\var >0$ we denote by $\Delta_\var  X \subseteq |X|^2$ the
$\var$-neighbourhood of the diagonal: the
 set of all pairs of distance at most $\var$. The left and right projections to $|X|$ are denoted by
$$
l_\var, r_\var \colon \Delta_\var X \to |X|\,,
$$
respectively. Recall the identity-carried morphism $i_X$  (Notation \ref{N:under}).
\end{nota}

\begin{theorem}[\cite{ADV}, Theorem 3.6 and Proposition 2.23]\label{P:comp}
An enriched endofunctor $T$ on $\Met$ or $\CMet$ is strongly finitary iff

{\rm a.} $T$ is finitary.

{\rm b.} For every space $X$ the map $Ti_X$ is an epimorphism.

{\rm c.} Given spaces $X$ and $Y$, each 
 nonexpanding map $f\colon T|X| \to Y$ satisfying the following condition
\begin{equation}\label{cond}
d(f\cdot Tl_\var, f\cdot Tr_\var) \leq \var \quad \mbox{for all}\quad \var > 0
\end{equation}
has a nonexpanding factorization $f'$ through $Ti_X$:
$$
\xymatrix  @C=3pc@R=3pc{
&& TX \ar[d]^{f'}\\
T\Delta_\var X  \ar@<0.5ex>[r]^{Tr_{\var}}
      \ar@<-0.5ex>[r]_{Tl_{\var}}& T|X| \ar[ur]^{Ti_X} \ar[r]_f & Y
      }
      $$
\end{theorem}

\begin{exs}\label{E:var} %%%%{E:comp}

(1) The finitary endofunctor $TX = X^n$ is strongly finitary on $\Met$ or $\CMet$ (for every $n\in \N$).
Indeed, $Ti_X = \id$, and a function $f\colon |X^n|\to Y$ satisfying \eqref{cond} is clearly nonexpanding, $f\colon X^n\to Y$.

In contrast, if $M=\{0,1\}$ is the space  with $d(0,1)$ finite, then the functor $T=\Met(M, -)$, assigning to $X$ the subspace of $X\times X$ on $\Delta_\var X$, is not strongly finitary:  the morphism $Ti_M$ is not epic.

(2) A coproduct of strongly finitary functors is strongly finitary. Coproducts in $\Met$ or $\CMet$ are disjoint unions with distance  $\infty$ between elements of distinct summands. It is easy to verify that if all summands fulfil the conditions of Proposition \ref{P:comp}, then so does the coproduct.

(3) The {Hausdorff endofunctor} $\mathcal{H} \colon \CMet \to \CMet$ is strongly finitary (\cite {ADV}, Example 2.25).
%%%%

(4) For every metric space $M$ the endofunctor $T=M\times -$ is strongly finitary. Given a nonexpanding map $f\colon M\times |X|\to Y$ satisfying \eqref{cond}, then $f\colon M\times X\to Y$ is also nonexpanding. This means nonexpanding in each component, and $f(m, -)$ is clearly nonexpanding.
% due to $\{m\}\times X_\var \subseteq TX_\var$.
 
\end{exs}

Let us recall the concept of a \emph{weighted limit} in an enriched category $\ck$. Given a diagram in $\ck$, an enriched functor $D\colon \cd\to \ck$, and an enriched weight
$$
W\colon \cd\to \Met,
$$
the weighted limit 
$$
C=\lim_W D
$$
is an object of $\ck$ together with isomorphisms
$$
\psi_X \colon \ck (X,C) \to [\cd, \Met] (W, [X, D-])
$$
natural in $X\in \ck^{\op}$.

Dually: a \emph{weighted colimit} of a diagram $D\colon \cd \to \ck$ with a weight $W\colon \cd^{\op} \to \Met$.

\begin{example}\label{E:eq}
	(1) Let $f, f' \colon X \to Y$ be a pair of morphisms in an enriched category.
	The \emph{$\var$-equalizer} of the pair is a morphism $e\colon E\to X$ universal with respect to
	$$
	d(f\cdot e, f'\cdot e) \leq \var\,.
	$$
	That is, given a morphism $a\colon A\to X$ with $d(f\cdot a, f'\cdot a) \leq \var $, then it factorizes through $e$.  Moreover, for every pair $u_1, u_2\colon U\to E$, we have $d(u_1, u_2)= d(e\cdot u_1, d\cdot u_2)$.
	
	This is $\lim\limits_W D$, where the domain of $D$ is a parallel pair of distance $\infty$, and $D$ assigns to it the pair $f, f'$, whereas $W$ is given as follows 
	$$
	\boxed{\rightrightarrows} \longmapsto \boxed{\{0\} \rightrightarrows \{\square, \Diamond\}}\quad \mbox{with}\quad d(\square, \Diamond)=\var\,.
	$$
	
	(2)Dually, the \emph{$\var$-coequalizer} is the universal morphism $c\colon Y\to C$ with respect to $d(c\cdot f, c\cdot f')\leq \var$. Here the weight is as follows
	$$
	\boxed{\rightrightarrows} \longmapsto \boxed{\{\square, \Diamond\} \leftleftarrows \{0\}}
	$$
	
	(3) The \emph{tensor} $M\otimes X$ of a space $M$ and an object $X$ of an enriched category $\ck$ is the colimit of the diagram $D\colon 1\to \ck$ representing $X$,  with $W\colon 1\to \Met$ representing $M$. This is an object $M\otimes X = \colim\limits_W D$ together with metric isomorphisms
	$$
	\xymatrix@R=.21pc{
		&M\otimes X \ar[r] &Y&\\
		\ar@{-}[rrr]&&&\\
		&M \ar[r]& \ck(X,Y)&
	}
	$$
	natural in $Y\in \ck$. This generalizes copowers: if $M$ is discrete, then $M\otimes X = \coprod\limits_M X$.
	
	(4) Dually a \emph{cotensor} $M \pitchfork X$ is $\lim\limits_W D$: an object  $M \pitchfork X$ together with an isomorphism
	$$
	\xymatrix@R=.21pc{
		&Y \ar[r] &M \pitchfork X&\\
		\ar@{-}[rrr]&&&\\
		&M \ar[r]& \ck(Y,X)&
	}
	$$
	natural in $Y \in \ck^{\op}$. If $M$ is discrete, then $M\pitchfork X = X^M$.
\end{example}

\begin{remark}\label{R:X}
	By \cite{ADV}, Proposition 2.20, every space $X$ in $\Met$ is a weighted colimit of a diagram of discrete spaces, called the \emph{foliation} of $X$.
	The domain $\cb$ of that diagram is the linearly ordered real interval $(0, \infty)$, enlarged by two cocones with domain $0$, denoted by $l_\var$, $r_\var \colon \var \to 0$ ($\var>0$). The foliation $D_X \colon \cb \to \Met$ takes $0$  to $|X|$ and $\var>0$ to $\Delta_\var X$.  It takes  the morphisms $l_\var$ and $r_\var$ to the left and right projections of $\Delta_\var$ The weight $B\colon \cb^{\op} \to \Met$ takes $0$ to $\{p\}$, and $\var$ to $\{l, r\}$ where $d(l,r) =\var$. Futher, $Bl_\var$, $Br_\var \colon \{p\} \to \{l, r\}$ are as expected. We have $X=\colim\limits_B D_X$.
\end{remark}

	\begin{prop}\label{P:X}(\cite{ADV}, Theorem 3.6)
		Every strongly finitary functor preserves colimits of foliations.
\end{prop}

\section{Varieties of Quantitative Algebras}

We recall varieties of quantitative algebra from \cite{MPP16}. 
Every variety $\cv$ is shown to be isomorphic to the category $\Met^{T_{\cv}}$ of Eilenberg-Moore algebras, where $T_{\cv}$ is the free-algebra monad.
We prove that $T_\cv$ is enriched and finitary, and present some examples.

Throughout our paper  $\Sigma=(\Sigma_n)_{n\in \N}$ denotes a finitary  signature: $\Sigma_n$ is the set of $n$-ary operation symbols.   

\begin{defi}[\cite{MPP16}]\label{D:q}
A \emph{quantitative algebra} is a metric space $A$ endowed with nonexpanding operations
$$
\sigma_A \colon A^n \to A \qquad (\sigma\in \Sigma_n)
$$
with respect to the maximum metric on $A^n$. It is \emph{complete} if $A$ is a complete metric space.
\end{defi}

\begin{nota}\label{N:free}
The category of quantitative algebras and nonexpanding homomorphisms is denoted by
$$
\Sigma \mbox{-} \Met\,.
$$
Analogously for its full subcategory
$$
\Sigma \mbox{-} \CMet
$$
of complete quantitative algebras.
\end{nota}

\begin{example}\label{E:term}
\emph{Term algebras.} In universal algebra the free $\Sigma$-algebra on a set $V$ of variables is the algebra $T_\Sigma V$ of terms. Terms are either variables,  or composite terms $\sigma (t_i)_{i<n}$ for $\sigma\in \Sigma_n$ and an $n$-tuple $(t_i)_{i<n}$ of terms. The \emph{depth} $\delta$ of a variable or a constant  is $0$, and the term $s= \sigma (t_i)_{i<n}$ for $n\geq 1$ has depth
$$
\delta(s)= 1+\max_i \delta (t_i)\,.
$$

Analogously for the free quantitative algebra $T_\Sigma X$ on a space $X$: its underlying algebra is the algebra $T_\Sigma |X|$ of terms.  Let us call terms $t$ and $t'$ \emph{similar} if we can obtain $t'$ from $t$ by changing some variables. Thus, all pairs of variables are similar. And terms similar to $\sigma(t_i)$ are precisely the terms $\sigma(t_i')$ with $t_i$ and $t_i'$ similar for each $i$. The metric
$$
d_X^\ast
$$
of the free quantitative algebra $T_\Sigma X$ is defined recursively as follows:
$$
d_X^\ast (t, t') =\begin{cases}
d_X(t, t') & \mbox{if\ \ }t, t' \in X\\
\max\limits_{i<n} d_X^\ast(t_i, t_i') & \mbox{if}\ \ t=\sigma(t_i) \ \mbox{and}\   t'=\sigma(t_i')\\
\infty & \mbox{if}\ \ t \ \mbox{is not similar to} \ t'
\end{cases}
$$
\end{example}

\begin{remark}\label{L:free}
The quantitative algebra $(T_\Sigma X, d_X^\ast)$ is free: for every quantitative algebra $A$ and every nonexpanding function $f\colon X\to A$ there is a unique \emph{extension} to a nonexpanding homomorphism
$$
f^{\#} \colon (T_\Sigma X, d_X^\ast) \to A\,
$$ 
with $f^{\#} \cdot \eta_X =f$. 
Indeed, the classical extension $f^{\#}$ in universal algebra is nonexpanding with respect to the metric $d_X^\ast$.
\end{remark}

For a complete metric space $X$ the situation is analogous: the space $T_\Sigma X$ has the complete metric $d_X^\ast$ above. This is the free algebra on $X$ in $\Sigma$-$\CMet$.

\begin{coro}\label{C:free}
The monad  $T_\Sigma$ of free quantitative algebras on $\Met$ or $\CMet$ is strongly finitary. It preserves epimorphisms.
\end{coro}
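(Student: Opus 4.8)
The plan is to verify the three conditions (a.)--(c.) of Proposition~\ref{P:comp} for $T = T_\Sigma$, and separately to check preservation of surjections. First, finitarity (condition a.): the term algebra $T_\Sigma X$ is, as a set, the union over $k\in\N$ of the subsets of terms of depth at most $k$, and each such subset is built by finitely iterated applications of the operations $\sigma_X\colon (T_\Sigma X)^n \to T_\Sigma X$. Using that finite powers $(-)^n$ are finitary (Example~\ref{E:pol}(1)) together with the fact that coproducts and composites of finitary functors along these depth strata are finitary, one sees that $T_\Sigma$ is a countable colimit of finitary functors and hence finitary; alternatively one checks directly that a directed colimit cocone $X = \colim X_i$ induces a colimit cocone $T_\Sigma X = \colim T_\Sigma X_i$ via the concrete description of directed colimits in the Appendix, since every term in $T_\Sigma|X|$ uses only finitely many variables, each of which comes from some $X_i$, and the metric $d_X^\ast$ is computed coordinatewise from $d_X$.

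For condition (b.), surjectivity of $T_\Sigma i_X \colon T_\Sigma|X| \to T_\Sigma X$: this map is carried by the identity on underlying sets (both spaces have underlying term algebra $T_\Sigma|X|$ by Example~\ref{E:term}), so it is even bijective, in particular surjective. For condition (c.), suppose $f\colon T_\Sigma|X| \to Y$ is nonexpanding and satisfies $d(f\cdot T_\Sigma l_\var, f\cdot T_\Sigma r_\var)\leq \var$ for all $\var>0$. We must show $f$ factors (nonexpansively) through $T_\Sigma i_X$, i.e.\ that $f$ is already nonexpanding as a map $T_\Sigma X \to Y$. So take terms $t,t'\in T_\Sigma|X|$ with $d_X^\ast(t,t')=\var<\infty$; we want $d(f(t),f(t'))\leq\var$. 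By definition of $d_X^\ast$, $t$ and $t'$ are similar, differing only by a substitution of variables, and $\var$ is the maximum over the finitely many variable-positions of the $d_X$-distance between the substituted variables. The key move is to realize the pair $(t,t')$ as the image of a single term over the variable set $\Delta_\var X$ under the two maps $T_\Sigma l_\var$ and $T_\Sigma r_\var$: build a term $\hat t \in T_\Sigma|\Delta_\var X|$ with the same shape, where at each position we put the pair $(x,x')\in\Delta_\var X$ of the corresponding variables of $t$ and $t'$ (this is legitimate precisely because each such pair has $d_X$-distance $\leq\var$). Then $T_\Sigma l_\var(\hat t) = t$ and $T_\Sigma r_\var(\hat t) = t'$, so the hypothesis gives $d(f(t),f(t')) = d(f\cdot T_\Sigma l_\var(\hat t),\, f\cdot T_\Sigma r_\var(\hat t)) \leq \var$, as desired. (The case $d_X^\ast(t,t')=\infty$ is vacuous.)

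Preservation of surjections is the easiest part: if $g\colon X\to Y$ is surjective then $T_\Sigma g\colon T_\Sigma|X|\to T_\Sigma|Y|$ sends a term to the term obtained by relabeling variables along $g$, and since every variable of $Y$ is hit by $g$, every term over $|Y|$ is hit, so $T_\Sigma g$ is surjective; the metric plays no role here.

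I expect the only genuinely delicate step to be condition (c.), and specifically the construction of the "joint lift" $\hat t$ over $\Delta_\var X$ together with the bookkeeping that the single real number $\var = d_X^\ast(t,t')$ simultaneously bounds all the needed variable-distances (so that one fixed value of $\var$ suffices and no limiting argument over shrinking $\var$ is required). Everything else is either a direct unwinding of the recursive definition of $d_X^\ast$ or an appeal to the already-proven fact (Lemma~\ref{L:free}) that $T_\Sigma$ is the free-algebra monad. Once (a.)--(c.) are in hand, Proposition~\ref{P:comp} yields that $T_\Sigma$ is strongly finitary, and the same argument applies verbatim in $\CMet$ since $d_X^\ast$ is complete whenever $d_X$ is.
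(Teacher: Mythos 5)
Your proof is correct, but it is organized differently from the paper's. The paper's argument is a one\--liner: since the metric $d_X^\ast$ assigns distance $\infty$ to non\--similar terms and restricts to the maximum metric on each similarity class, the functor $T_\Sigma$ decomposes as a coproduct of categorical\--power functors $(-)^n$, one summand for each similarity class of terms on $n$ variables, and this decomposition is independent of $X$; strong finitarity and preservation of surjections then follow from the previously recorded facts that each $(-)^n$ is strongly finitary and that strongly finitary functors are closed under coproducts. You instead verify conditions a.--c.\ of Proposition \ref{P:comp} directly on the term algebra. The mathematical content is the same: your joint lift $\hat t\in T_\Sigma(\Delta_\var X)$ of a similar pair $(t,t')$ is precisely the verification of condition c.\ for the single summand $(-)^n$ indexed by the common shape of $t$ and $t'$, and your observation that $T_\Sigma i_X$ is identity\--carried is the paper's observation that the coproduct is independent of $X$. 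What your version adds is the explicit bookkeeping that the paper compresses into the word ``clearly'' when treating $(-)^n$ --- in particular the point that the single value $\var=d_X^\ast(t,t')$ bounds all leaf distances simultaneously because $d_X^\ast$ is defined as a maximum, so no limiting argument over shrinking $\var$ is needed. What the paper's route buys is brevity and reuse of closure properties already established; what yours buys is self\--containedness. Your treatment of finitarity, of the $\CMet$ case, and of preservation of surjections all match the paper's intent.
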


%%%%%%%%%%%
Indeed, let $\approx$ be the similarity equivalence on $T_{\Sigma}|X|$. Then $T_\Sigma X$ is the coproduct of spaces $X^n$, one copy for every equivalence class of $\approx$ of terms on precisely $n$ variables. Moreover, this  coproduct is independent of $X$. In other words, the functor $T_\Sigma$ is a coproduct of functors $(-)^n$, one copy of each equivalence class. By (1) and (2) in Example \ref{E:pol}, $T_\Sigma$ is strongly finitary. Preservation of epimorphisms is clear:
given a dense subspace $A$ of $X$, it follows that $A^n$ is dense in $X^n$.

\begin{defi}[\cite{MPP16}]
A (1-basic) \emph{quantitative equation} is an expression $t=_\var t'$, where $t$ and $t'$ are terms in $T_\Sigma V$ for a finite set $V$ of variables, and $\var \geq 0$ is a real number.

A quantitative algebra $A$ \emph{satisfies} this equation provided that every interpretation $f\colon V\to A$ of the  variables fulfils
$$
d_A\big(f^{\#}(t),  f^{\#}(t')\big) \leq \var\,.
$$

A \emph{variety} $\cv$ (aka 1-basic variety) of quantitative algebras is a full subcategory 
of $\Sigma$-$\Met$ specified by a set  of quantitative equations. Thus, a quantitative algebra lies in $\cv$ iff it satisfies each of the given equations.
\end{defi}

We write $t=t'$ in place of $t=_0 t'$, and call such equations \emph{ordinary}.
 In \cite{MPP17} the number $\var$ was assumed to be rational. But this makes no difference: if $\var>0$ is irrational, choose any dicreasing sequence $\var(n)$, $n\in \N$, of rationals converging to $\var$. Then the equation $t=_\var t'$ is equivalent to the set of equations $t=_{\var(n)} t'$ for $n\in \N$.
 
\begin{exs}\label{E:var}
(1) Quantitative monoids.
This is the variety presented by the usual signature (of a binary operation and constant ${e}$) and the usual ordinary equations: $(xy) z = x(yz)$, $xe =x$ and $ex=x$. Observe that by definition of $\Sigma$-$\Met$ this means that multiplication is nonexpanding:
$$
d(xy, x'y') \leq \max \{d(x, x'), d(y, y')\}\,.
$$

(2) \emph{Actions of quantitative monoids.}  
Let us recall that an action of a classical monoid $M$ on a set $X$ is a mapping from $M\times X$ to $X$ (notation $(m, x) \mapsto mx$) whose  curryfication is a monoid homomorphism from $M$ to  the composition monoid $\Set(X,X)$.  Analogously, given a quantitative monoid $M$, its (quantitative) \emph{action} on a metric space $X$ is a nonexpanding homomorphism from $M$ to $[X,X]$ in $\Met$. This is  a monoid  action such that  $d(mx, my) \leq d(x,y)$ for all $(x,y)\in X^2$, and $d(mx, m'x)\leq d(m,m')$ for all $(m,m')\in M\times M$.

Here $\Sigma$ consists of unary operations $m(-)$ for $m\in M$. This variety is presented by the usual ordinary equations:
$$
m(m'x) = (mm')x \quad \mbox{and}\quad  ex=x\,,
$$
together with the following  quantitative equations
$$
mx=_\var m'x \quad \mbox{where}\quad \var =d(m, m')\,.
$$

(3) \emph{Quantitative semilattices.} By a semilattice we mean  a join-semilattice with a bottom. Equivalently, a commutative  and idempotent monoid.

Quantitative semilattices are semilattices acting on a metric space with non-expanding  binary joins. In other words, commutative and idempotent quantitative monoids.

(4) \emph{Small metric spaces}. For the empty signature, $\Sigma$-$\Met$ is simply $\Met$. The quantitative  equation
$$
x=_\var y
$$
presents  all metric spaces of diameter at most $\var$.
\end{exs}

Every variety $ \cv$ is equipped with the obvious forgetful functor $U_{\cv} \colon$ $ \cv \to \Met$.

All the above  has an analogous formulation for complete spaces. A variety $\cv$ is a subcategory of $\Sigma$-$\CMet$ specified by a set  of quantitative equations.

\begin{theorem}[\cite{MPP16}, Sections 6 and 7]
{\rm (1)} In every variety $\cv$ of quantitative algebras each space $X$ generates a free algebra $F_{\cv} X$. That is, the forgetful functor
$$
U_{\cv} \colon \cv \to \Met
$$
has a left adjoint $F_{\cv} \colon \Met \to \cv$.

{\rm (2)} In every variety $\cv$ of complete quantitative algebras each complete space $X$ generates a free algebra $F_{\cv} X$. That is, the forgetful functor
$$
U_{\cv} \colon \cv \to \CMet
$$
has a left adjoint $F_{\cv} \colon \CMet \to \cv$.
\end{theorem}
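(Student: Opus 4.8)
The plan is to construct the free $\cv$-algebra on a space $X$ directly, as a subalgebra of a large product of algebras of $\cv$, using only that $\cv$ is closed in $\Sigma$-$\Met$ under products and under isometrically embedded subalgebras, together with the explicit term algebra of Lemma~\ref{L:free} to keep the whole construction within a set.

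More precisely, I would first fix $\kappa=\card |T_\Sigma X|$ and collect, up to isomorphism of quantitative algebras, the \emph{set} $S$ of all pairs $(A,g)$ with $A\in\cv$, $\card |A|\le\kappa$, and $g\colon X\to U_\cv A$ nonexpanding. Form the product $P=\prod_{(A,g)\in S}A$ in $\Sigma$-$\Met$; it lies in $\cv$ since each quantitative equation is inherited by products. Let $\langle g\rangle\colon X\to U_\cv P$ be the induced nonexpanding map and $\langle g\rangle^{\#}\colon T_\Sigma X\to P$ its transpose along Lemma~\ref{L:free}. Factoring $\langle g\rangle^{\#}$ as a surjective homomorphism $q$ followed by an isometric embedding,
$$
T_\Sigma X \overset{q}{\twoheadrightarrow} F_\cv X \hookrightarrow P,
$$
defines $F_\cv X$ as the subalgebra of $P$ generated by the image of $\langle g\rangle$; it lies in $\cv$ because $\cv$ is closed under subalgebras, and it carries the sup metric inherited from $P$, so that $d_{F_\cv X}(q(t),q(t'))=\sup_{(A,g)\in S}d_A(g^{\#}(t),g^{\#}(t'))\le d_X^{\ast}(t,t')$ and the corestriction $\eta_X\colon X\to U_\cv F_\cv X$ of $\langle g\rangle$ is nonexpanding. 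For the universal property I would take any $B\in\cv$ and nonexpanding $h\colon X\to U_\cv B$, pass to the image subalgebra of $h^{\#}$ (of cardinality $\le\kappa$, hence isomorphic to some member of $S$), and read off the required homomorphism $\bar h\colon F_\cv X\to B$ with $U_\cv\bar h\cdot\eta_X=h$ by composing a product projection with the embeddings; uniqueness is immediate since $F_\cv X$ is generated by the image of $\eta_X$. This yields the left adjoint $F_\cv\dashv U_\cv$.

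For the complete case I would run the same argument with $S$ ranging over the complete members of $\cv$ of bounded cardinality, so that $P$ is complete, and then take $F_\cv X$ to be the \emph{closure} in $P$ of the generated subalgebra: it is complete as a closed subspace, a subalgebra because the $\Sigma$-operations are nonexpanding and hence continuous, and still a member of $\cv$ --- since the variable sets are finite and discrete, every interpretation into $F_\cv X$ is a limit of interpretations into the dense subalgebra, so each quantitative equation survives passage to the closure by continuity of the term operations. The universal property is then checked as before, using density of $F_\cv X$ in its closure together with completeness of the target.

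The only genuinely non-formal ingredients I expect are: (i) the set-theoretic bound, i.e.\ that the ``image algebras'' $\operatorname{im} h^{\#}$ form a set up to isomorphism --- this is exactly where the explicit description of $T_\Sigma X$ in Lemma~\ref{L:free} does its work, providing the solution-set condition; and (ii) verifying that $\cv$ is indeed closed under the two constructions used --- for products and subalgebras this is immediate from the syntactic shape of quantitative equations, whereas in the complete case the extra step is the continuity argument showing that the Cauchy completion of a $\cv$-algebra still satisfies the equations. An alternative, essentially equivalent route would be to observe that $\cv$ is closed under all limits in $\Sigma$-$\Met$, hence complete with $U_\cv$ preserving limits, and then invoke the General Adjoint Functor Theorem, the same cardinality estimate furnishing the solution set.
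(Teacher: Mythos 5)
Your construction is correct, but it is not the route the paper takes: the paper gives no proof of this theorem at all, citing \cite{MPP16}, Sections 6 and 7, where the free algebra is built \emph{syntactically} as a metric quotient of the term algebra $T_\Sigma X$ by the pseudometric $\inf\{\var : \cv\vdash t=_\var t'\}$ arising from quantitative equational deduction (the paper itself reproduces exactly this kind of construction for the special cases treated in its Sections 6 and 7). Your argument is the \emph{semantic} one: closure of $\cv$ under products and (closed) subalgebras plus a solution-set bound, i.e.\ the classical ``subalgebra of a product of a representative set of generated algebras'' construction, or equivalently the General Adjoint Functor Theorem. Both work; yours is shorter and needs no equational logic, but it produces $F_\cv X$ only up to isomorphism, whereas the syntactic construction yields the explicit description of $T_\cv X$ (as a quotient of $T_\Sigma X$ with an identified metric) that the paper actually relies on later, e.g.\ in Lemma \ref{L:BW} and throughout Sections 6--8. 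Two small points in your write-up deserve one more line each: the representatives in $S$ must be taken up to isomorphism \emph{of pairs over $X$} (an algebra isomorphism commuting with the maps from $X$), so that the projection really restricts to the required $\bar h$ with $\bar h\cdot\eta_X=h$; and in the complete case the cardinality bound for the solution set is not $\kappa=\card|T_\Sigma X|$ but $\kappa^{\aleph_0}$, since the relevant generated subalgebra is the \emph{closure} of $\operatorname{im}h^{\#}$ in $B$. Neither is a gap, just bookkeeping.
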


\begin{nota}
We denote by $T_{\cv} = U_{\cv} F_{\cv}$ the free-algebra monad on $\Met$ or $\CMet$, respectively.
\end{nota}

% stranka 10 velky1

\begin{exs}\label{E:monad}
(1) A free quantitative monoid on a space $X$ in $\Met$ or $\CMet$ is the coproduct (disjoint union) of finite powers. That is, the word monoid 
$$
T_{\cv} X= X^\ast
$$ with the following metric
$$
d(x_0 \dots x_{n-1}, y_0 \dots y_{m-1})=
\begin{cases}
\max\limits_{i<n} d(x_i, y_i) & \mbox{if}\  n=m\\
\infty & 
\mbox{else.}
\end{cases}
$$
This is a strongly finitary monad (\ref {E:var} (1), and (2)).

(2) Free quantitative semilattices in $\CMet$ are given by the Haudorff functor (Example \ref{E:pol}):
$$
T_\cv =\ch\,,
$$
see \cite{BHMW}. This monad is strongly finitary (\cite{ADV}, Example 2.25).

In $\Met$ the free  semilattice  on a space $X$ is the semilattice
$$
T_{\cv} X= \mathcal{H}_f X
$$
of all finite subsets with the Hausdorff metric.

(3) Let $E$ be a metric space (of exceptions). Moggi's exception monad (\cite{M}) is the coproduct
$$
TX = X+E\,.
$$
This is the free-algebra monad for the variety of nullary operations indexed by $E$, presented by the following quantitative equations
$$
e=_\var e' \qquad (e, e' \in E)
$$
where $\var = d(e, e')$.

(4) For small  spaces (Example  \ref{E:var}) presented by $x=_\var y$ the free algebras are given as follows:
$$
T_{\cv} (X,d) = (X, \max\{d, \var\})\,.
$$
\end{exs}

%(1) For quantitative monoids, $T_\cv$ is a coproduct of $(-)^n$, $n\in \N$, see Example \ref{E:pol}, (1) and (2).
%
%(2) Let us verify that $\ch$ is strongly finitary on $\CMet$, using  Proposition \ref{P:comp}. Let $f\colon \ch |X| \to Y$  be a nonexpanding map satisfying the condition \eqref{cond}. The space $\ch|X|$ is the discrete one on all finite subsets of $X$. Let $Z\subseteq \ch X$ be the corresponding subspace of all finite sets with the Hausdorff metric. Then $f\colon Z\to Y$ is nonexpanding. Indeed, let $A, B\in Z$ be sets of distance $d_{\ch}(A,B) =\var$. Then $d(a, B)\leq \var$ for each $a\in A$ and $d(A,b) \leq \var$ for each $b\in B$. Thus $A\times B \subseteq X_\var$  (Notation \ref{N:comp}). We are to prove for all subsets $A\ne B$ that
%$$
%d\big(f(A), f(B)\big) \leq \var\,.
%$$
%This is clear if $A$ or $B$ is empty since $\var =\infty$. Otherwise we have $A\times B \in \ch X_\var$ satisfying   $A= \ch l_\var(A\times B)$ and $B= \ch r_\var (A\times B)$. Thus \eqref{cond} implies the inequality above.
%
%(3) The examples of monoid action, exception monad and small spaces are covered  by Section 4.
%
%\begin{theorem}[\cite{ADV}]\label{T:suff}
%Every strongly finitary monad on  $\Met$ is the free-algebra monad of a variety.
%\end{theorem}
% The converse does not hold, see Section 5.
% 

\begin{remark}\label{P:m}

For every variety $\cv$ the free-algebra functor $F_\cv$ is enriched. Thus, the monad $T_{\cv}$ is  enriched.

Indeed, let morphisms $f$, $g\colon X\to Y$ have distance $d(f,g) = \delta$. We are to prove that  
$$
d\big( F_{\cv} f(s), F_{\cv}g(s)
\big) \leq \delta
$$
 holds for all $s\in F_{\cv} X$. Denote by $S\subseteq |F_{\cv}X|$ the set of all $s$ satisfying the desired inequality. Then $S$ contains the image of $\eta_X \colon X\to F_{\cv} X$: if $s = \eta_X (s_0)$, then
$$
d\big( F_{\cv} f(s), F_{\cv} g(s)\big) = d\big( f(s_0), g(s_0)\big) \leq \delta\,.
$$
And $S$ is closed under all operations: given $\sigma \in \Sigma_n$ and $s_i \in S$, then for $s=\sigma (s_i)_{i<n}$ we get
$$
F_{\cv} f(s) = \sigma_{F_{\cv}X} \big(F_{\cv} f(s_i)\big)\,,
$$
and analogously for $F_{\cv} g(s)$. Since $\sigma_{F{\cv}X}$  is nonexpanding, this implies
$$
d\big( F_{\cv} f(s), F_{\cv} g(s)\big)\leq \max\limits_i d\big( F_{\cv} f(s_i), F_{\cv} g(s_i)\big)\leq \delta\,.
$$
The universal property of $\eta
$ implies that there exists no proper subspace of $F_{\cv}X$ which contains the image of $\eta_X$ and is closed under operations. Thus $S= |F_{\cv}X|$, as claimed.
\end{remark}

\begin{nota}\label{N:term}
(1) For every element $t\in T_{\cv}n$  ($n\in \N$) and every quantitative algebra $A\in \cv$ we denote by $t_A$ the corresponding $n$-ary operation: to an $n$-tuple $f\colon n \to |A|$ it assigns $t_A(f)=  \bar f(t)$ where $\bar f\colon T_{\cv} n \to A$ is the unique nonexpanding  homomorphism extending $u$. 

Every homomorphism $h\colon A\to B$ preserves this $n$-ary operation:
$$
h\big(t_A(x_i)\big)= t_B\big(h(x_i)\big)\,.
$$
This is easy to prove by induction on the depth (Example \ref{E:term}) of $t$.

(2) In particular, for every term $t \in T_\Sigma n$ we have
$$t_A (f) = f^{\#} (t).$$
\end{nota}

The classical Birkhoff Variety Theorem characterizes varieties as classes of algebras
closed under products, subalgebras, and homomorphic images. We have the analogous concepts in $\Sigma$-$\Met$ and $\Sigma$-$\CMet$:

(1) A \emph{product} $\prod\limits_{i\in I} A_i$ of quantitative algebras is the categorical  product  (with the supremum metric). The operations are defined coordinate-wise.

(2) A \emph{subalgebra} of a quantitative algebra $A$ is represented by a subspace closed under operations (in case  of $\Sigma$-$\Met$) and a closed subspace closed under operations  (in case of $\Sigma$-$\CMet$).

(3)  A \emph{homomorphic image} of a quantitative  algebra $A$ is an algebra $B$ for which a nonexpanding  surjective homomorphism $h\colon A \to B$ exists.

The following theorem was stated in \cite{MPP17} without a proof. The first proof was presented in \cite{MU}, B19-20.

\begin{birk}\label{T:BVT}
 A full subcategory of $\Sigma$-$\Met$  is a variety iff it is closed under products, subalgebras, and homomorphic images.
 \end{birk}
 
 \begin{open}
 What is the appropriate variety characterization for varieties in $\Sigma$-$\CMet$? It is easy to prove that varieties are closed under products, closed subalgebras, and images of dense homomorphisms. Does the converse hold?
 \end{open}
 
 \begin{defi}\label{D:con}
 	A \emph{concrete category} over $\Met$  or $\CMet$ is an enriched category together with an enriched functor  $U_\ck$ from $\ck$ to $\Met$ or $\CMet$ which is \emph{faithful}: $d(f,g) = d(U_\ck f, U_\ck g)$ for parallel pairs $f$, $g$.
 	
 	A \emph{concrete functor} $F\colon \ck \to \ck'$ is an enriched functor such that 
 	$$
 	U_\ck = U_{\ck'} \cdot F\,.
 	$$
 \end{defi}
 Every concrete functor is clearly enriched. Examples of concrete categories are varieties, and monadic categories $\Met^T$ or $\CMet^T$.

 \begin{theorem}\label{T:Beck}
 	Every variety $\cv$ of quantitative algebras is concretely isomorphic to the category of algebras for $T_{\cv}$: the comparison functor  $K_\cv$ from $\cv$ to $\Met^{T_{\cv}}$ or $\CMet^{T_\cv}$ is a concrete isomorphism.
 \end{theorem}
 %%%%%%%%%%%
 \begin{proof}
 	The functor $K_\cv$ assigns to every algebra $A$ in $\cv$ the Eilenberg-Moore algebra $\alpha \colon T_\cv A \to A$ given by the unique homomorphism with $\alpha \cdot \eta_A =\id_A$. We see that $K_\cv$ is a concrete (thus enriched) functor.
 	
 	The proof that $K_\cv$ is invertible is analogous to the casse of classical varieties (in $\Set$), see e.g. Theorem  VI.8.1 in \cite{ML}.
 \end{proof}
 
 %\begin{coro}\label{C:Bek}
 %For every variety $\cv$ the forgetful functor $U_\cv $  creates limits.
 %\end{coro}
 
 %\begin{nota}\label{N:mnd}
 % We denote by
 %$$
 %\Mnd_f (\Met)\quad \mbox{or}\quad \Mnd_f(\CMet)
 %$$
 %the enriched category of enriched finitary monads  and monad morphisms, where %the distances are
 %$$
 %d(\varphi, \varphi') =\sup_{X\in \Met} d(\varphi_X, \varphi_X')
 %$$
 %for all parallel monad morphisms $\varphi$, $\varphi'\colon T\to S$.
 %\end{nota}
 %%%%%% 16 a, b
 
 \begin{remark}\label{R:BW}
 	(1) We thus can identify an algebra $A$ of a variety $\cv$ with the corresponding
 	algebra $a\colon T_\cv A\to A$ of $\Met^{T_\cv}$ or $\CMet^{T_\cv}$. Here $a$ is the unique homomorphism of $\cv$ extending $\id_A$.
 	
 	(2) Given a nonexpanding  map $f\colon X\to A$, the homomorphism $\bar f\colon T_\cv X\to A$ extending it is
 	$$
 	\bar f = a\cdot T_\cv f\,.
 	$$
 	Indeed, we have
 	$$ \bar f\cdot \eta_X = a\cdot \eta_A \cdot f =f
 	$$
 	due to $a\cdot \eta_A=\id_A$. And $\bar f$ is a homomorphism since both $a$ and $T_\cv f$ are.
 	
 	(3) Let $k\colon T_\Sigma X \to T_\cv X$ be the unique homomorphism with $\eta_X =k \cdot \eta_X^\Sigma$. Then
 	$$
 	f^{\#} = \bar f \cdot k= a\cdot T_\cv f \cdot k \colon T_\Sigma X\to A\,.
 	$$
 	Indeed, since $\bar f \cdot k\colon T_\Sigma X\to A$ is a homomorphism, we just need to observe that it extends $f$:
 	$$
 	(\bar f\cdot k) \cdot \eta_X^\Sigma = \bar f \cdot \eta_X=f\,.
 	$$
 \end{remark}
 
 \begin{lemma}\label{L:BW}
 	The morphism $k\colon T_\Sigma X \to T_\cv X$ above is an epimorphism.
 \end{lemma}
 
 \begin{proof} 
 	The image of $k$ is a subspace $m\colon M\hookrightarrow T_\cv X$ which, since $k$ is a homomorphism, is closed under the operations.
 	
 	(1) Let $\cv \subseteq \Sigma$-$\Met$.
 	By Theorem \ref{T:BVT} this subalgebra $M$ lies in $\cv$. Let
 	$$
 	\eta'_X \colon X \to M\,, \quad \eta_X = m\cdot \eta'_X\,,
 	$$
 	be the codomain restriction of $\eta_X$. It extends to a homomorphism $e\colon T_\cv X\to M$ with
 	$$
 	(m\cdot e) \cdot \eta_X = m\cdot \eta'_X = \eta_X\,.
 	$$
 	Since $m\cdot e$ is a homomorphism, this implies $m\cdot e =\id$. Therefore, $M= T_\cv X$.

 	(2) Let $\cv \subseteq \Sigma$-$\CMet$. Then the subspace $M$ need  not be complete. Its closure $\bar m \colon \bar M \hookrightarrow T_\cv X$ is a complete subalgebra. Indeed, every operation $\sigma \in \Sigma_{n}$ yields a nonexpanding map $\sigma$ from $M^n$ to $M$ (because  $k$ is a homomorphism). Since $M^n$ is dense in $\bar M^n$, the embedding $M^n \hookrightarrow \bar M^n$ is a Cauchy completion of $M^n$ (Remark \ref{R:Cauchy}). Thus, $\sigma$ extends to a nonexpanding map $\bar \sigma \colon \bar M^n\to \bar M$. We obtain a closed subalgebra $\bar M$ of $T_\cv X$, thus $M$ lies in $\cv$. This implies $\bar M = T_\cv X$, as in Item (1).
 \end{proof}
 
 We now prove that the monads $T_\cv$ on $\Met$ or $\CMet$ are finitary. This follows from the next result.
 
 \begin{prop}\label{L:dir}
 Every variety of (complete) quantitative algebras is closed in $\Sigma$-$\Met$ or $\Sigma$-$\CMet$ under directed colimits.
 \end{prop}

 \begin{proof}
 	Let $D$ be a directed diagram having objects $D_i$ ($i\in I$), with  a colimit cocone $c_i \colon D_i \to C$ ($i\in I$). We prove that every quantitative equation
 	$$
 	t=_\var t'
 	$$
 	holding in $D_i$ for each $i$ also holds in $C$. Our proposition then follows trivially.
 	
 	(1) Let $V$ be the set of variables that appear in $t$ or $t'$.
% 	Every interpretation 
% 	$$
% 	f\colon V\to UC =\colim\limits_{i\in I} UD_i
% 	$$
% 	factorizes, due to Proposition A1, through some $c_i$ ($i\in I$):
% 	$$
% 	\xymatrix@C=2pc@R=2pc{
% 		&
% 		UD_i
% 		\ar[d]^{c_i}\\
% 		V \ar[ur]^{g} \ar[r]_f & UC
% 	}
% 	$$
% 	Since $c_i$ is a homomorphism, we have
% 	$$
% 	f^{\#} = c_i \cdot g^{\#} \colon T_\Sigma V \to C\,.
% 	$$
% 	We know that
% 	$$
% 	d\big(g^{\#} (t), g^{\#} (t')\big) \leq \var
% 	$$
% 	and $c_i$ is nonexpanding. Thus, $d\big(f^{\#} (t), f^{\#} (t')\big) \leq \var$,  as desired.
% 
% (2) The proof for $\CMet$. We again prove $d\big(f^{\#} (t), f^{\#} (t')\big) \leq \var$.
The union $m \colon B  \hookrightarrow C$ of the images of all $c_i$, as a subspace of $C$,
is a subalgebra of $C$, since each $c_i[D_i ]$ is, and the poset $I$ is directed. By Propositions A1 and A2, $B$ is dense in $C$. Therefore, the power $B^V$ is dense in $C^V$,
because $V$ is finite. Thus, there exists a sequence in $B^V$, $g_k \colon V \to B$, such that $f$ is the limit 
of $m \cdot g_k$ for $k \to \infty$.

Let  $\gamma \colon T_\Sigma C  \to C$ represent the algebra C (Remark \ref{R:BW}), then 
$$f^{\#}= \gamma \cdot T_\Sigma f \colon T_\Sigma V \to C.$$
Analogously, when representing $B$ by $\beta \colon T_\Sigma B \to B$, we have $g_k^{\#} = \beta \cdot T_\Sigma g_k$. As $m$ is a homomorphism, we have $m \cdot \beta =\gamma \cdot T_\Sigma    m$. 
%Therefore
%$$m \cdot g_k^{\#} = m \cdot \beta \cdot T_\Sigma g_k = \gamma \cdot T_\Sigma (m \cdot g_k).$$
Since $T_\Sigma$ is enriched, it is locally continuous. This proves that $f^{\#}$ is a limit of the sequence $m \cdot g_k^{\#}$:
$$f^{\#} =\gamma \cdot T_\Sigma (\lim_k  m\cdot g_k) = m\cdot \lim_k ( \beta \cdot T_\Sigma g_k) =m \cdot \lim_k g_k^{\#}.$$

(2) The forgetful functor $U_\Sigma$ to $\Met$ or $\CMet$ preserves directed colimits because they commute  with finite products (Example \ref{E:pol}). Thus for every $k$ there exists $i(k) \in I$ such that the image of $g_k$ lies in the image of $c_{i(k)}$.
We thus have a map $\bar g_k \colon V \to D_{i(k)}$ with $c_{i(k)} \cdot \bar g_k = m \cdot g_k$.We conclude that
$c_{i(k)} \cdot \bar g^{\#}_k = m \cdot g_k^{\#}$: both sides are homomorphisms extending $m \cdot g_k$.
Since $D_{i(k)}$ satisfies the given equation, we have (using that $c_{i(k)}$ is nonexpanding) that
$$d(m\cdot g_k^{\#} (t), m \cdot g_k^{\#} (t')) \leq \var,$$
for every $k$. This proves, via Item (1), the equation $d(f^{\#} (t), f^{\#} (t'))\leq \var.$

\end{proof}

\begin{coro}\label{C:fin}
For every variety $\cv$ the monad $T_{\cv}$ on $\Met$ or $\CMet$ 
 is finitary.
\end{coro}

In fact, the forgetful functor $U_\Sigma$ is finitary (Example 2.6, (1) and (2)). Since $\cv$ is closed under directed colimits, its
forgetful functor is also finitary. It follows that $T_{\cv}$ is finitary.

By now we know that the free-algebra monads of varieties are enriched and finitary. Unforunately, these two properties
do not characterize the monads $T_{\cv}$. But \emph{strongly} finitary monads do
have the form $T_{\cv}$: 

\begin{theorem}\label{T:adv}%3.17
	Every strongly finitary monad $T$ on $\Met$ or $\CMet$
	is the free-algebra monad for a variety of quantitative algebras.
\end{theorem}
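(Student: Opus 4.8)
The plan is to realize a strongly finitary monad $T$ as the free-algebra monad of an explicitly constructed variety, using the Kan-extension presentation $T = \Lan_K TK$ together with the characterization in Proposition \ref{P:comp}. First I would extract a signature and equations from $T$ itself. For the signature take $\Sigma_n = |Tn|$, the underlying set of $Tn$, for each finite $n$; so each element $t \in Tn$ becomes an $n$-ary operation symbol. On any $T$-algebra $(A, a\colon TA \to A)$ this symbol is interpreted by the map $A^n \cong \mathsf{Set}(n,|A|) \xrightarrow{} |TA| \xrightarrow{a} |A|$ already familiar from Notation \ref{N:term}; conversely this is how one turns a $\Sigma$-algebra back into (a candidate for) a $T$-algebra. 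The equations should encode two things: (i) the monad laws — for each $n$, each $t \in Tn$, and each tuple of symbols forming a map $m \to Tn$, an \emph{ordinary} equation identifying the composite operation with the operation named by the corresponding element of $Tm$ (this is exactly the associativity/unit law $\mu$, $\eta$ written out elementwise); and (ii) the \emph{quantitative} equations coming from the metric on each $Tn$: whenever $t, t' \in Tn$ satisfy $d_{Tn}(t,t') \le \var$, impose $t(x_0,\dots,x_{n-1}) =_{\var} t'(x_0,\dots,x_{n-1})$. Call $\cv$ the resulting variety.

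The core of the argument is then to show $T_\cv \cong T$ as monads on $\Met$ (resp. $\CMet$). The strategy is to prove that the categories $\cv$ and the Eilenberg–Moore category $\Met^T$ are isomorphic over $\Met$; since $\cv \cong \Met^{T_\cv}$ and an isomorphism of EM-categories over the base forces an isomorphism of monads, this suffices. One direction is essentially Notation \ref{N:term}: every $T$-algebra, equipped with the operations $t_A$, satisfies the ordinary equations (monad laws) and the quantitative equations (because $a$ is nonexpanding for the maximum metric on $TA$ and $A^n \to TA$ lands appropriately), and homomorphisms of $T$-algebras preserve all the $t_A$. For the reverse direction I would take a quantitative algebra $A$ in $\cv$ and build a structure map $a\colon TA \to A$: first define $a$ on the discrete space $|A|$ by using that $T|A| = \colim_n (Tn \to T|A|)$ over finite subsets $n \hookrightarrow |A|$ (here strong finitarity is not yet needed, only finitarity), sending the element of $T|A|$ named by $t \in Tn$ along an $n$-tuple $u$ to $t_A(u)$; the ordinary equations guarantee this is well-defined and satisfies the $T$-algebra laws on $|A|$.

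The main obstacle — and the place where strong finitarity is genuinely used — is upgrading $a\colon T|A| \to A$ to a nonexpanding map $TA \to A$ on the non-discrete space $A$. This is precisely what conditions b.\ and c.\ of Proposition \ref{P:comp} are for. Surjectivity of $Ti_A$ (condition b.) lets me define $a$ on all of $|TA|$ from its values on $T|A|$; and condition c.\ applied to the map $a\colon T|A| \to A$ gives the nonexpanding factorization through $Ti_A$, i.e.\ a nonexpanding $\bar a\colon TA \to A$ — provided I verify the hypothesis \eqref{cond}, namely $d(a \cdot Tl_\var,\, a\cdot Tr_\var) \le \var$ for all $\var>0$. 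This inequality is exactly where the \emph{quantitative} equations of $\cv$ enter: a point of $T\Delta_\var A$ is named by some $t \in Tn$ together with a pair of $n$-tuples that are $\var$-close coordinatewise, and $d(t_A(u), t_A(u'))$ must be bounded — but here one must be careful, since $t_A$ is nonexpanding for the \emph{maximum} metric on $A^n$, which handles coordinatewise-close tuples directly, and it is the pairs $t,t'$ with $d_{Tn}(t,t')\le\var$ that need the quantitative equations. I would assemble these two observations to verify \eqref{cond}, then check that $\bar a$ is an EM-algebra (the laws hold on the dense discrete part $|TA|$ and both sides are nonexpanding, hence they agree), and finally that this assignment is inverse to the one from Notation \ref{N:term} and natural in $A$. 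A final routine check confirms the bijection is the identity on underlying spaces and respects forgetful functors, yielding $T_\cv \cong T$.
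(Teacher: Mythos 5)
Your variety is exactly the one the paper constructs: signature $\Sigma_n = |T{\bm n}|$, ordinary equations encoding $\eta$ and $\mu$, quantitative equations $\sigma =_\var \sigma'$ recording the metric of $T{\bm n}$, and the same passage from a $T$-algebra $(A,\alpha)$ to the $\Sigma$-algebra with operations $f \mapsto \alpha\cdot Tf(\sigma)$. Where you take a genuinely different route is in proving that this yields an isomorphism of monads. The paper never constructs an inverse functor $\cv \to \Met^T$: it shows the comparison $R\colon \Met^T \to \cv$ is a concrete full embedding and then verifies directly that $R(TX,\mu_X)$ is the free algebra of $\cv$ on $X$ --- first for finite discrete $X$ (where uniqueness of the extension uses surjectivity of $Ti_{\bm n}$, i.e.\ Proposition \ref{P:comp}.b), then for finite $X$ by preservation of the weighted colimit $X=\colim_B D_X$ of Remark \ref{R:X} (which is where conditions b and c of Proposition \ref{P:comp} really enter, packaged as colimit preservation), then for arbitrary $X$ by finitarity; Theorem \ref{T:Beck} then closes the argument. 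You instead build the inverse explicitly by equipping each $A\in\cv$ with an Eilenberg--Moore structure, defining it on $T|A|$ by finitarity and extending along $Ti_A$ via Proposition \ref{P:comp}.b--c. Both arguments consume strong finitarity in equivalent forms; yours is more hands-on and produces the structure map of an arbitrary algebra of $\cv$ directly, at the price of the well-definedness and algebra-law verifications on $T|A|$ that the paper's free-algebra route avoids.

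Two bookkeeping points to fix when you write this out. First, condition \eqref{cond} for $a_0\colon T|A|\to A$ uses only that each $t_A$ is nonexpanding for the maximum metric on $A^n$: a generator of $T\Delta_\var A$ involves a single $t\in T{\bm n}$ together with two coordinatewise $\var$-close tuples, so no comparison of distinct symbols $t,t'$ occurs there. The quantitative equations (those with $d_{T{\bm n}}(t,t')\le\var$) are needed at a different spot, namely to show that $a_0$ is nonexpanding as a map out of $T|A|$ with its non-discrete colimit metric --- and that nonexpansiveness is itself a hypothesis of Proposition \ref{P:comp}.c, separate from \eqref{cond}. You have both ingredients, but they attach to different hypotheses than your text suggests. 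Second, the associativity law $a\cdot Ta = a\cdot\mu_A$ should be checked after precomposing with a surjection onto $TTA$ coming from a discrete stage (again via Proposition \ref{P:comp}.b); in $\Met$ the relevant notion is surjectivity rather than density of a ``discrete part.''
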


\begin{proof} 
Let us choose a countable set $V=\{x_k\}_{k\in \N}$ of variables, and put 
$$
\bm{n} = \{x_0, \dots , x_{n-1}\}\quad \mbox{for}\quad n\in \N\,.
$$
(1)	We define a variety $\cv$ of quantitative $\Sigma$-algebras, where 
	$
	\Sigma_n = |T{\bm n}| \mbox{\ for\ } n\in \N
	$.
	Thus, an $n$-ary symbol $\sigma$ is an element of $T{\bm n}$. We identify $\sigma$ with the term $\sigma(x_0, \dots, x_{n-1})$ in $T_\Sigma \bm{n} $.
% where $V=\{x_i\}_{i\in \N}$ is a chosen set of variables.

	The variety $\cv$ is presented by the ordinary equations (i) and (ii) below  describing the monad structure  $\eta$ and $\mu$ of $T$, together with  equations (iii) describing the metric $d_{\bm{n} }$ of the space $T\bm{n}$:
		\begin{equation*}
	\eta_{\bm n} (x_i)= x_i \qquad (i<n)\,;\tag{i}
	\end{equation*}
		\begin{equation*}
	\mu_{\bm k} \cdot Tf(\sigma) = \sigma \big(f(x_i)\big)_{i<n}\quad  (f\colon {\bm n}\to T{\bm k}\ \mbox{and}\  \sigma \in \Sigma_n)\,;\tag{ii}
	\end{equation*}
\begin{equation*}
	\sigma=_\var \sigma'\quad \mbox{for all }\sigma, \sigma' \in \Sigma_n \mbox{\ with } d_{\bm n}(\sigma, \sigma') =\var \,.\tag{iii}
	\end{equation*}
 
	Here $n$ and $k$ range over $\N$. 
%%%%%%%%%%%%%%%%%%%%%%%%%%%%%%%%%%%%%%%%%%%%%%%%%%%%%%%%%%%%%%%%%%%%%%%%%%%%%%%%%%%%%%%%%%%%%%%%%%%%%%%%%%%%%%%%%%%%%%%%%%%%%%%%%%%%%%%%%%%%%%%%%%%%%%%%%%%%%%%%%%%%%%%%%%%%%%%%%%%%%%%%%%%%%%%%%%%%%%%%%%%%%%%%%%%%%%%%%%%%%%%%%%%%%%%%%%%%%%%%%%%%%%%%%%%%%%%%%%%%%%%%%%%%%%%%%%%%%%%%%%%%%%%%%%%%%%%%%%%%%%%%%%%%%

{\normalsize }(2) For every space $A$ the morphisms $Tf\colon T{\bm n} \to TA$ (for $n\in \N$ and $f\colon {\bm n} \to A$) form a collectively dense  cocone.
Indeed, use that $Ti_A$ is epic (Proposition \ref{P:comp}), and
that, for $f\colon  {\bm n} \to |A|$, all $Tf$ form a colimit cocone (Corollary A3). Therefore, the images of that cocone are collectively epic (that is, dense) by Propositions A1 and A2.

%%%%%%%%%%%%%%%%%%%tady zacina FOSSACS
%%%%%%%%%%%%%%%%%%%%%%%%%%%%%%%%%%%%%%%%%%%%%%
\vskip 1mm %%%%%%%%%%%%%%%%%%%%%%%%%%%%ODTUD
%%%%%%%%%%%%%%%%%%%%%%%%%%%%%%%%%%%%%%%%%%%%%%%%%%%%%%%%%%%%%%%%%%%%%%%%%%%%%%%%%%%%
(3) Every algebra $\alpha \colon TA\to A$ in $\Met^T$ or $\CMet^T$ yields a $\Sigma$-algebra  $R(A, \alpha)$ (shortly $RA$) on the space $A$. Its operation $\sigma_{RA}$, for $\sigma\in \Sigma_n$, is defined as follows
$$
\sigma_{RA} (f) = \alpha \cdot Tf(\sigma) \quad \mbox{for all}\quad f\colon {\bm n} \to |A|\,.
$$
Then $RA$ lies in $\cv_T$. To verify this, we first observe that for every interpretation $v \colon {\bm  n}\to A$ the homomorphism
$v^{\#} \colon T_\Sigma {\bm n} \to A$  restricts on the subset $j_n \colon T{\bm n} \hookrightarrow T_\Sigma {\bm n}$ (of terms of depth $1$) to the composite of $Tv$ with $\alpha$:
\begin{equation*}
v^{\#} \cdot j_{\bm n} = \alpha \cdot Tv \colon T\bm{n}  \to A\,. \tag{*}
\end{equation*}
This follows from the definition of the operations of $RA$:
given $\sigma \in \Sigma_n$, we have (since $\sigma$ represents $\sigma(x_0, \dots , x_{n-1})$ that
$$
v^{\#} \cdot j_{\bm n}(\sigma) = v^{\#} \big(\sigma(x_i)_{i<n}\big) = \sigma_A \big(v(x_i)\big)_{i<n} = \alpha \cdot T v(\sigma)\,.
$$
 We now verify that $RA$ satisfies (i) - (iii) above.

(i) The equality $v^{\#} (\eta _{\bm n} (x_i) ) =v(x_i)= v^{\#}(x_i)$ follows from $\alpha \cdot \eta_A =\id$, using \thetag{*}.

(ii) We verify 
$$
v^{\#} (\mu_{\bm k} \cdot Tf(\sigma) )= v^{\#} (\sigma_{RA} (x_i) )).
$$

This follows, as $\alpha \cdot T\alpha = \alpha \cdot \mu_A$, from the commutative diagram below:
$$
\xymatrix@C=2pc{
	T\bm{n} \ar@{-->}[r]^{Tf} & 
	T^2{\bm k}  \ar[d]_{\mu_{\bm k}} \ar[r]^{T^2 v} & 
	T^2A\ar@<-.6ex>[d]_{T\alpha} \ar@<.6ex>[d]^{\mu_A}\\
	&T{\bm k} \ar[r]_{Tv} \ar[d]_{j_{\bm k}} & TA \ar[d]^{\alpha}\\
	& T_\Sigma {\bm k} \ar[r]_{v^{\#}}& A
}
$$	
Using \thetag{*}, applied to $Tf(\sigma)$ and $\alpha \cdot \mu_A = \alpha \cdot T\alpha$, the left-hand side is equal to
$$
v^{\#} (\mu_{\bm k}  \cdot j_{\bm k} \cdot Tf(\sigma) )=\alpha \cdot T\alpha \cdot T^2 v \cdot Tf(\sigma) = \sigma_{RA} (\alpha \cdot Tv \cdot f)\,.
$$
This is the same as  to the right-hand side:
$$v^{\#} (\sigma(f(x_i))=\sigma_{RA} (v^{\#}\cdot j_k \cdot f(x_i))= \sigma_{RA} (\alpha \cdot Tv\cdot f(x_i)).$$

(iii) Since $v^{\#} \cdot j_{\bm n}$ is nonexpanding, we conclude that $d_{\bm n} (\sigma, \sigma ')< \var$ implies $d(v^{\#} (\sigma), v^{\#} (\sigma ')) \leq \var$.

\vskip 1mm
(3) The homomorphisms $h$ from $(A, \alpha)$ to $(B, \beta)$ in $\Met^T$ are precisely the homomorphisms from $RA$ to $RB$ in $\cv_T$. Indeed, assume first $h \cdot \alpha = \beta \cdot Th$. For every interpretation $f\colon {\bm n} \to A$ we get that
$h(\sigma_{RA} (f)) =\sigma_{RB} (h\cdot f),$
since the left-hand side is $h \cdot \alpha \cdot Tf(\sigma)$, and the right-hand one is $\beta \cdot Th \cdot Tf (\sigma)$.

Conversely, if $h$ is a $\Sigma$-homomorphism, we prove for all interpretations $f \colon {\bm n} \to A$, that 
$ (h \cdot \alpha) \cdot Tf = (\beta \cdot Th) \cdot Tf$. (This concludes the proof by Item (2).)  The left-hand side, applied to $\sigma \in T{\bm n}$, yields $h(\sigma _A(f)) = \sigma_ B (h \cdot f)$.
% 	We denote by $E\colon \Met^T\to \cv_T$ the resulting concrete full embedding. Notice that for $(TX, \mu_X)$ we have $\sigma_{E(TX)} (\eta_X) =\sigma$.
Which is precisely the right-hand side applied to $\sigma$.	

\vskip 1mm
(4) We thus get a concrete full embedding $R \colon \Met^T \to \cv_T$. To prove that $T$ is the free-algebra monad of $\cv$, it is sufficient, due to Theorem \ref{T:Beck},  to verify  for all metric spaces $X$ that $R(TX, \mu_X)$ is the free algebra of $\cv_T$ with the universal map $\eta_X\colon X \to TX$.

(4a) Let $X$ be finite and discrete. Say, $X={\bm n}$\ ($n\in \N$). Given an algebra $A\in \cv_T$ and an interpretation $v\colon {\bm n}\to |A|$, we verify that the morphism $\bar v\colon T{\bm n} \to A$ assigning to $\sigma \in |T{\bm n}|=\Sigma_n$ the value
$$
\bar v (\sigma) =\sigma_A(v)
$$
is a nonexpanding homomorphism $\bar v \colon R(T{\bm n}, \mu_{{\bm n}}) \to A$ with $v=\bar v \cdot \eta_{{\bm n}}$, and it is unique.

To show that $\bar v$ is nonexpanding, we first recall that 
$$v^{\#}(\sigma) = \sigma_A\big( v(x_0),\dots , v(x_{n-1})\big) =\bar v(\sigma)$$ 
for all $\sigma \in Tn$. Let $\sigma$, $\sigma'\in T{\bm n}$ have distance $\var$, then equations (iii) imply $d_A\big(v^{\#}(\sigma), v^{\#}(\sigma')\big) \leq \var$. Thus $d_A\big( \bar v(\sigma), \bar v(\sigma')\big)\leq \var$.
 
 To prove that $\bar v$ is a $\Sigma$-homomorphism, consider a $k$-ary operation $\sigma\in T{\bm k}$ and a $k$-tuple $f\colon {\bm k}\to |RT{\bm n}|=\Sigma_n$. We verify that
 $$
  \bar v \big( \sigma_{R(T{\bm n})} (f)\big) = \sigma_A(\bar v \cdot f)\,.\
 $$
 To compute the left-hand side, $l$, put $\tau=\sigma_{R(T{\bm n} )} (f) \in \Sigma_n$: 
  $$
l= \bar v \big(\sigma_{R(T{\bm n})}(f)\big) = \bar v (\tau) = \tau_A(v) = 
 v^{\#}(\tau)\,.
 $$
 The operations $\sigma$ of the algebra  $R(Tn, \mu_{\bm n})$ are defined by
 $$
 \tau = \sigma_{R(T{\bm n})} (f) = \mu_{\bm n} \cdot Tf(\sigma)\,.
 $$
 Thus,
 $$
 l=v^{\#}(\tau) = v^{\#}\big(\mu_{\bm n} \cdot Tf(\sigma)\big)\,.
 $$
 Since $A$ satisfies equations (ii), we conclude that
 $$
 \bar v\big(\sigma_{R(T{\bm n})} (f) = v^{\#}(\tau) = v^{\#}\big(\sigma(f(x_0), \dots , f(x_{k-1})\big)\,.
 $$
 For the $n$-ary operations $\varrho^i = f(x_i) \in \Sigma_n$ this last result is $v^{\#}$ applied to the term $\sigma(\varrho^0, \dots , \varrho^{k-1})$, which yields $\sigma_A \big(\varrho_A^0(v), \dots , \varrho_A^{k-1}(v)\big)$. Thus, from $\varrho^i_A(v) = \bar v (\varrho^i) = \bar v \cdot f(x_i)$ we get that
 $$
 l=\bar v \big( \sigma_{R(T{\bm n})}(f)\big) =\sigma_A \big(\bar v \cdot f(x_0), \dots , \bar v \cdot f(x_{n-1})\big)\,,
 $$
 as required.
 
 The equality $v= \bar v \cdot \eta_{\bm n}$ follows from equations (i): we have
 $$
 v(x_i) = v^{\#}\big(\eta_{\bm n}(x_i)\big) = \bar v \big(\eta_{{\bm n}}(x_i)\big) \quad \mbox{for\ \ } i<n\,.
 $$

To prove uniqueness, let $h\colon R(T{\bm n}, \mu_{{\bm n}}) \to A$ be a homomorphism with $h\cdot \eta_{\bm n} =v$. We prove $h\cdot Ti_{{\bm n}} = \bar v\cdot Ti_{\bm n}$, and  apply Proposition \ref{P:comp}.b to conclude $h=\bar v$. For every $\sigma \in |T{\bm n}|$, since  the operations of 
$R(T {\bm n}, \mu_{\bm n})$ yield $\sigma_{R(T{\bm n})} (\eta_{\bm n})= \mu _{\bm n} \cdot T\eta _{\bm n} (\sigma)= \sigma$,
and $h$ preserves $\sigma_{R(T{\bm n})}$, we get $ h(\sigma)= \sigma_A(h\cdot \eta_{\bm n}) = \sigma_A(v) = \bar v(\sigma)$.

\vskip 1mm

(4b) For an arbitrary finite space $X$, we use that  it is a weighted colimit $X=\colim\limits_B D_X$ (Remark \ref{R:X}), and $TX =\colim\limits_B TD_X$ (Proposition \ref{P:X}). All spaces $Y$ in the image of $D_X$ are finite and discrete, thus $R(TY, \mu_Y)$ is free on $Y$ in $\cv_T$ by Item (4a). Since the free-algebra functor preserves weighted colimits, being an enriched  left adjoint, we conclude that $R(TX, \mu_X)$ is free on $X$ in $\cv_T$.

\vskip 1mm

(4c) For an arbitrary space $X$, we have the directed colimit $X=\colim\limits_{i\in I} X_i$ where $X_i$ ranges over all finite subspaces (Corollary A3). % (Example \ref{E:dir}).
 Since $T$ is finitary,  $TX=\colim TX_i$, and from Item (4b) we conclude that $R(TX, \mu_X)$ is free on $X$ in $\cv_T$.
\end{proof}

%%%%%%%%%%%%%%%%%%zpet
\section{The Category of Varieties}
We introduce the category of varieties, and describe weighted limits in it. This is used in the next section for a characterization  the monads $T_\cv$ for varieties $\cv$.
 Recall concrete funcors from Definition \ref{D:con}.

\begin{defi}[Category of varieties]\label{D:var}
	We denote by
	$$
	\cv ar (\Met)
	$$
	the  category of all varieties of quantitative algebras (for arbitrary signatures).
	Morphisms from $\cv$ to $\cw$ are the  {concrete functors} $G\colon \cv\to \cw$
	$$
	\xymatrix@C=1pc@R=2pc{
		\cv \ar [rr]^{G} \ar[dr]_{U_{\cv}} && \cw\ar[dl]^{U_{\cw}} \\
		&\Met &
	}
	$$
The distance of morphisms $G$, $G'\colon \cv \to \cw$ with $\cw \subseteq \Sigma$-$\Met$ is
	$$
	d(G, G') = \sup  d(t_{GA}, t_{G'A}),
	$$
where $t$ ranges over all terms $T_\Sigma n$, and $A$ over algebras of $\cv$.
\end{defi}	
	
	To verify that $d$ is a metric, and $\cv ar(\Met)$ is indeed enriched, we use Proposition \ref{L:faith}
	below.

\begin{prop}[\cite{BW}, Theorem 3.6.3]\label{P:BW}
Given monads $T$ and $S$ on $\Met$, there is  a bijective correspondence between monad morphisms $\gamma \colon T\to S$ and concrete functors $G\colon \Met^S \to \Met^T$. To every algebra $\alpha \colon SA \to A$ the functor $G$ assigns the algebra
$$
TA \xrightarrow{\ \gamma_A\ } SA \xrightarrow{\ \alpha\ } A \quad \mbox{in\quad}\Met^T\,.
$$
Analogously for monads on $\CMet$.
\end{prop}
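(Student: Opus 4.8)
\textbf{Proof plan for Proposition~\ref{P:BW}.}

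The statement is the classical Beck-style correspondence between monad morphisms and concrete (algebra-)functors, now in the $\Met$-enriched setting, so the plan is to adapt the standard argument while checking that everything respects the metric. First I would fix the direction $\gamma \mapsto G$: given a monad morphism $\gamma\colon T\to S$ (a natural transformation compatible with units and multiplications), I define $G$ on objects by sending $\alpha\colon SA\to A$ to the composite $\alpha\cdot\gamma_A\colon TA\to A$, and on morphisms by $G h = h$. The routine checks are that (a) $\alpha\cdot\gamma_A$ is a $T$-algebra structure — the unit law uses $\gamma_A\cdot\eta^T_A=\eta^S_A$ and the associativity law uses $\gamma_A\cdot\mu^T_A=\mu^S_A\cdot S\gamma_A\cdot\gamma_{SA}$ together with naturality of $\gamma$ — and (b) an $S$-homomorphism $h$ is automatically a $T$-homomorphism for the transported structures, by naturality of $\gamma$. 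Since $Gh=h$ as underlying morphisms, $G$ is concrete (it commutes with the forgetful functors on the nose), and because the forgetful functors are faithful and enriched in both cases, $G$ is automatically an enriched functor; no separate local-nonexpansiveness verification is needed.

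For the reverse direction, starting from a concrete functor $G\colon \Met^S\to\Met^T$, I would recover $\gamma$ by evaluating $G$ on the free algebras. Apply $G$ to the free $S$-algebra $\mu^S_A\colon S SA\to SA$; concreteness forces the underlying object of $G(SA,\mu^S_A)$ to be $SA$, so we obtain a $T$-algebra structure $\gamma'_A\colon T(SA)\to SA$, and I set $\gamma_A = \gamma'_A\cdot T\eta^S_A\colon TA\to SA$. Naturality of $\gamma$ in $A$ follows from $G$ applied to the free-algebra morphisms $Sf$ (which are $S$-homomorphisms, hence sent to $T$-homomorphisms by $G$) together with naturality of $\eta^S$. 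Compatibility of $\gamma$ with the units is immediate from the unit law of the $T$-algebra $\gamma'_A$; compatibility with the multiplications is the step requiring the most care — it comes from the fact that $\mu^S_A\colon (SSA,\mu^S_{SA})\to(SA,\mu^S_A)$ is an $S$-homomorphism, so $G$ makes it a $T$-homomorphism, which unwinds (after precomposing appropriately with $\eta^S$) to the multiplication axiom for $\gamma$. Finally I would check that the two assignments are mutually inverse: starting from $\gamma$, forming $G$, and reading off $\gamma'$ returns the composite that reduces back to $\gamma$ by the $T$-algebra unit law; starting from $G$, the reconstructed monad morphism induces a functor that agrees with $G$ on free algebras, and since every $S$-algebra is a reflexive coequalizer of free ones and concrete functors between monadic categories over the same base are determined by their action on free algebras (equivalently, by a direct diagram chase using the canonical presentation $\alpha\colon (SA,\mu^S_A)\to(A,\alpha)$), it agrees with $G$ everywhere.

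The main obstacle is purely bookkeeping rather than conceptual: there is no genuine enrichment difficulty because concreteness over a faithful enriched base transports all the 2-categorical structure for free, so the real work is the multiplication-compatibility diagram chase in the $G\mapsto\gamma$ direction and the verification that a concrete functor is pinned down by its values on free algebras. Since this is a verbatim instance of \cite{BW}, Theorem~3.6.3 — whose proof is formulated for an arbitrary base category with no special properties used — the cleanest exposition is simply to cite it and remark that the enriched refinement is automatic because the forgetful functors $U^T$ and $U^S$ are enriched and faithful, so a concrete functor between the Eilenberg–Moore categories is necessarily $\Met$-enriched.
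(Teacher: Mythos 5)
Your plan is correct and matches the paper's treatment exactly: the paper offers no proof beyond citing \cite{BW}, Theorem 3.6.3, and the only point needing comment in the metric setting is the one you make — since the forgetful functors are faithful and enriched in the sense of Definition~\ref{D:con}, a concrete functor is automatically an isometry on hom-spaces and hence $\Met$-enriched, so the classical Beck correspondence carries over verbatim. One small slip to fix when writing it out: the multiplication axiom for a monad morphism is $\gamma_A\cdot\mu^T_A=\mu^S_A\cdot S\gamma_A\cdot\gamma_{TA}$ (equivalently $\mu^S_A\cdot\gamma_{SA}\cdot T\gamma_A$), not $\mu^S_A\cdot S\gamma_A\cdot\gamma_{SA}$, whose middle composite does not typecheck.
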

%%%%%%%%%%%%%%%%%%%%%%%%%%%%%%%%%%%%%%%%%%%%%%%%%%%%%%%%%%%%%%%%%%%%%%%%%%%%%%%%%%16a-16d
\begin{example}\label{E:BW}
(1) The embedding of a variety $\cv$ is a concrete functor $\cv \hookrightarrow \Sigma$-$\Met$. The corresponding \emph{canonical monad morphism}
$$
k_\cv \colon T_\Sigma \to T_\cv
$$
has the components $(k_\cv)_X\colon T_\Sigma X \to T_\cv X$ of Lemma \ref{L:BW}.

(2) Let $\Gamma$ be the signature of a single $n$-ary operation $\gamma$. For every term $t\in T_\Sigma {\bm n}$ we have a concrete functor
$$
F_t\colon \Sigma\mbox{-}\Met \to \Gamma\mbox{-}\Met\,.
$$
It assigns to every $\Sigma$-algebra $A$ the $\Gamma$-algebra $\gamma_A\colon A^n \to A$ defined by computing $t$ in $A$: $\gamma_A (a_i)_{i<n} = t_A (a_i)_{i<n}$ (Notation \ref{N:term}).

The corresponding monad morphism
$$
\widehat t\colon T_\Gamma \to T_\Sigma
$$
is the substitution: every term in $T_\Gamma X$ is turned to a term in $T_\Sigma X$ by substituting every occurence of $\gamma$ by the term $t$. More precisely, the $X$-component assigns to every term $s \in T_\Gamma X$ the following value:
$$
\widehat t_X (s) =\begin{cases}
s & \mbox{if}\quad s\in X\\
t\big(\widehat t_X(s_i)\big)_{i<n} & \mbox{if}\quad s= \gamma(s_i)_{i<n}\,.
\end{cases}
$$
Indeed, for every $\Sigma$-algebra $A$ expressed via the Eilenberg-Moore algebra $\alpha \colon T_\Sigma A \to A$ the map $\alpha_X$ takes a term $t$ and computes it in $A$, with the interpretation of variables $\id_A$. The Eilenberg-Moore algebra  $\alpha \cdot \widehat t_A \colon T_\Gamma A \to A$ is thus the computation  of the term $t$, as claimed.

%(3) More generally, let $\Sigma$ and $\Gamma$ be arbitrary signatures, and let $\cv \subseteq \Sigma$-$\Met$ be a variety. A concrete  functor $P\colon \cv \to \Gamma$-$\Met$ is given by the following choice map $\varphi_P$. It chooses for every symbol $\gamma \in  \Gamma$ of arity $n$ an element $\varphi_P(\gamma)$ of $T_\cv n$. 
 	
 %	The functor $P$ assigns to an algebra $A\in \cv$ the $\Gamma$-algebra $PA$ with the operations $\gamma_{PA}$ ($\gamma \in \Gamma_n$) below:
% 	$$
% 	\gamma_{PA} (f) = \bar f\big(\varphi_P (\gamma)\big)\quad \mbox{for all} \ f\colon n\to |A|\,.
 %	$$
 %	Here $\bar f\colon T_\cv n \to A$ is the homomorphic extension of $f$.
 	
 %	Indeed, as proved in \cite{TAKR}, the monad $T_\Gamma$ is free on the endofunctor
 %	$$
 %	H_\Gamma = \coprod_{n\in \N} \Met (\Gamma_n, -)\,.

\end{example}

%\begin{remark}\label{R:sur}
%(1) For every  variety $\cv \subseteq \Sigma$-$\Met$ the components of $k_\cv\colon T_\Sigma \to T_\cv$ are surjective, see Lemma \ref{L:BW}.
%
%%: the image of $(k_\cv)_X$ is a subspace of $T_\cv X$ closed under the operations and containing $\eta_X[X]$. No proper subalgebra of $T_\cv X$ has these properties.
%
%(2) Analogously for $\CMet$: The components of $k_\cv$ are  dense.

%% 18 a,b
%\end{remark}

 % In order to introduce semi-strongly finitary monads in the next section, we
%use the category of finitary monads on $\Met$, where for parallel monad morphisms $\varphi, \psi \colon S \to T$ their distance is defined as the supremum of $d(\varphi _n ,\psi _n)$ over all natural numbers $n$. This distance forms a pseudometric: even if $\varphi$ and $\psi$ are distinct, their distance can be $0$ (since the monads are finitary, but not necessarily strongly finitary). We thus need to work with categories enriched over pseudometric spaces:	
 \begin{lemma}\label{L:epi}
 	The monad $T_\cv$ preserves epimorphisms for every variety $\cv$.
 \end{lemma}

\begin{proof}
The monad $T_\Sigma$ has this property by Corollary \ref{C:free}. Let $k_\cv \colon T_\Sigma \to T_{\cv}$ be the above monad morphism.
	Given an epimorphism $f\colon X\to Y$, from the naturality square
	$$
	(k_\cv )_Y \cdot T_\Sigma e = T_\cv e \cdot (k_\cv)_X
	$$
	we deduce, since the left-hand side is epic by Lemma \ref{L:BW}, that $T_\cv e$ is also epic.
\end{proof}

 \begin{nota}\label{N:mnd}
   We denote by
 $$
 \Mnd_f (\Met)
 $$ the category of enriched finitary monads on $\Met$ and monad morphisms. It is enriched (as a full subcategory of $\Mnd(\Met))$ by the supremum metric.
 
 Anaologously we use $\Mnd_f(\CMet)$.
 \end{nota}

%%%%%%%%%%%%%%%%%%%%%%%%%%%%%%%%%%%%%%%%%%%%%%%%%%%%%%%%%%%%%%%%%%%%%%%%%%%%%%%%%%%%

\begin{prop}\label{L:faith}
Let $G, G'\colon \cv \to \bar\cv$ be concrete functors with the corresponding monad morphisms
 $\gamma, \gamma' \colon T_{\bar \cv} \to T_\cv$. Then
$$
d(G, G') = d(\gamma, \gamma')\,.
$$
\end{prop}

\begin{proof}
We denote the monads $T_\cv$ and $T_{\bar \cv}$ by $(T, \mu, \eta)$ and $(\bar T, \bar \mu, \bar \eta)$, respectively. The signature of $\bar \cv$ is denoted by $\Sigma$.

(1) Let $A$ be an algebra in $\cv$ represented by $a \colon TA \to A$.Then
$GA$ is represented by $a \cdot \gamma_A \colon \bar T A \to A$. For every
$n$-tuple $f \colon n \to |A| = |GA|$ we have, by Remark \ref {R:BW},
$$
f^{\#} = a \cdot \gamma _A \cdot \bar T f \cdot k \colon T_\Sigma n \to GA.
$$
Since $f^{\#} (t) = t_{GA} (f)$  for every term  $t \in T_\Sigma n$ (Notation \ref{N:term}), we have 
$$t_{GA} (f) = a \cdot \gamma_A \cdot \bar T f \cdot k (t).$$
Analogously $t_{G'A} (f)= a \cdot \gamma'_A \cdot \bar T f \cdot k (t).$
Consequently,
$$d(t_{GA} (f), t_{G'A} (f)) \leq d(\gamma_A, \gamma'_A).$$
This proves that $$d(t_{GA},, t'_{GA}) \leq d(\gamma_A ,\gamma'_A).$$

%(1) We first verify that
%$$d(G,G')=\sup _{\sigma \in \Sigma} d(\sigma _{GB} (\eta _n),\sigma _{G'B} (\eta	_n)),$$ 
%where $n$ is the arity of $\sigma$, and $B=Tn$. For that we
%just need to show that, given an algebra $A$ in $\cv$ and an $n$-tple $f
%\colon n \to |A|$ in it, then
%$$d(\sigma_{GA} (f) , \sigma_{G'A} (f)) \leq d(\sigma _{GB} (\eta _n),\sigma _{G'B} (\eta_n)). $$
%Indeed, the unique extension $\bar f \colon Tn\to A$ of $f$, being a
%homomorphism, fulfils
%$$\sigma_{GA} (f) = \sigma _{GA} (\bar f \cdot \eta_n)= \bar f (\sigma _{GB} (\eta _n)).$$
%Analogously for $G'A$. Since $\bar f$ is nonexpanding, this proves the
%desired inequality.

(2) We also have
$$d(\gamma, \gamma ') = \sup _{n \in \Bbb{N}} d(\gamma _n, \gamma ' _n).$$
Indeed, the monad $\bar T$ is finitary  (Corollary \ref{C:fin}), thus $d(\gamma, \gamma ')$ is
the supremum of the distances od $\gamma _M$ and $\gamma '_M$ over all
finite spaces $M$: see Corollary A3. We can assume that the underlying set of $M$ is some natural
number $n$, and we use the fact that
$\bar T i_M \colon \bar T n \to \bar T M$
is dense (Lemma \ref{L:epi}). Thus,
$$d(\gamma _M, \gamma '_M)=d(\gamma _M \cdot \bar T i_M,\gamma '_M\cdot \bar T i_M).$$
By naturality of $\gamma$ and $\gamma '$ this yields the desired inequality:
$$d(\gamma _M, \gamma '_M)=d(\bar T i_M \cdot \gamma _n, \bar T i_M\cdot \gamma '_n) \leq d(\gamma _n, \gamma '_n).$$

%(3)The free algebra
%$$
%B= (Tm, \mu_m) \quad \mbox{in\ }\ \cv
%$$
%is mapped by $G$ to
%$$
%GB \equiv \bar T Tm \xrightarrow{\gamma_{Tm}} TTm \xrightarrow{\mu_m} Tm.
%$$
%
%Let $\Sigma$ be the signature of $\bar \cv$ and $k \colon T_\Sigma n \to
%\bar T n$ the morphism of Remark \ref{R:BW} (3).
%For every $n$-tuple $f\colon n\to |B| = |GB|$, we have, by that Remark, that $f^{\#} \colon T_\Sigma n\to GB$ is given by
%$$
%f^{\#} = (\mu_m \cdot \gamma_{Tm}) \cdot \bar T f\cdot k = \mu_m \cdot Tf \cdot \gamma_n\cdot k\,.
%$$
%Analogously for $G'B$. Thus, for every term in $T_\Sigma$ Notation \ref{N:term} yields
%$$
%t_{GB} (f) = \mu_m\cdot Tf \cdot \gamma_n \cdot k(t)\quad \mbox{and}\quad t_{G'B} (f) =  \mu_m \cdot Tf\cdot \gamma'_n \cdot k(t)\,.
%$$
%Consequently,
%$$
%d\big(t_{GA}(f), t_{G'A}(f)\big) \leq d(\gamma_n, \gamma'_n)\,.
%$$
%Since this holds for all $f$, we have  proved
%$$
%d(G, G') = \sup d\big(t_{GA}(f), t_{G'A}(f)\big)\leq d(\gamma, \gamma')\,.
%$$
%
%To prove the reverse inequality, we verify
%$$
%d(\gamma_n, \gamma'_n) \leq d(G, G') \quad \mbox{for all \ $n\in \N $}.
%$$
%We apply the above to
%$$
%m=n \quad \mbox{and} \ \ f= \eta_n \colon n\to T n
%$$
%to get
%$$
%t_{GA}(f) = \mu_n \cdot T\eta_n \cdot \gamma_n \cdot k(t) = \gamma_n \cdot k(t)\,,
%$$
%and analogously for $t_{G'A}(f) $. Recall that $k$ is dense (Lemma \ref{L:BW}). Therefore, 
%we get
%$$
%d(\gamma _n, \gamma' _n)= \sup _t d(\gamma _n (k(t)), \gamma' _n (k(t))) \leq \sup _td(t_{GA}, t_{G'A}).
%$$
%This proves $d(\gamma_n, \gamma'_n)\leq d(G, G')$, as desired.
(3) Item (1) holds for all terms $t$, thus we have $$d(G,G') \leq
d(\gamma, \gamma').$$ 

To prove the oposite inequality, we apply (1) to the
free algebra $A= (Tn, \mu _n)$ and to $f=\eta_n$. We get from $\mu _n \cdot T
\eta_n = id$ that
$$f^{\#} = \mu _n \cdot \gamma _{\bar T \eta_n} \cdot \bar T \eta _n \cdot k 
= \mu _n \cdot T\eta_n \cdot \gamma _n \cdot k = \gamma _n \cdot k.$$ 
Therefore, $t_{GA} (f)=
\gamma_n \cdot k(t)$. Analogously, $t'_{GA} (f)= \gamma_n \cdot k(t')$. This
proves, since $k$ is dense (Lemma \ref{L:BW}),
that
$$
d(\gamma _n, \gamma'_n) = \sup _t d(\gamma _n (k(t)), \gamma'_n (k(t)) \leq
\sup _t d(t_{GA} (f), t'_{GA} (f))\leq d(G, G').
	$$
	Thus the proof is concluded by Item (2).

\end{proof}
 Let us recall from \cite{K} that an enriched functor $P$ is fully faithful iff all of its restrictions
 $P_{X, Y} \colon  [X, Y] \to [PX, PY]$ are metric isomorphisms.

 %The distance of concrete functors $H, H' \colon \cv \to \cw$ is
%$$
%d(H, H') = \sup d(t_{HA}, t_{H'A})
%$$
%where the supremum ranges over free algebras $A = T_{\cv} n$ ($n\in \N$), and terms $t\in T_{\cw} k$ $(k\in \N)$, see Notation \ref{N:term}.

%%%%%%%%%%%%%%%%%%%%%%%%%%% 
%Recall the enriched category of finitary monads from Notation \ref{N:mnd}.
%An enriched functor $H$ is \emph{faithful} if for all parallel pairs $f$, $f'$ we have $d(f, f') = d(Hf, Hf')$.

\begin{lemma}\label{T:ff}
The following defines an enriched, fully faithful functor
$$
\Phi\colon \cv ar (\Met)^{\op} \to \Mnd_f(\Met)\,.
$$
It assigns to every variety $\cv$ the monad $T_\cv$.
Given a concrete functor $H\colon \cv \to \cw$, we form the following (concrete) composite
$$
\Met^{T_\cv} \xrightarrow{\ K_\cv^{-1}\ } \cv \xrightarrow{\ H\ } \cw \xrightarrow{\ K_{\cw}\ }  \Met^{T_{\cw}}\,.
$$
Then $\Phi(H)\colon T_\cw \to T_\cv$ is the corresponding monad morphism.
\end{lemma}

%%%%%%%%%%%%%%%%%%%%%%
\begin{proof}
%The ordinary functor $E_U$ is well defined (Proposition \ref{P:m}) and fully faithful. 
The monad $\Phi\cv$ is enriched and  finitary (Remark \ref{P:m} and Corollary \ref{C:fin}). The functor $\Phi$ is well defined: it clearly preserves  identity morphisms and composition. Due to the bijection between monad morphisms and concrete functors, $\Phi$ is full.
It is faithful due to Proposition \ref{L:faith}.
\end{proof}
%It remains to prove,  for concrete functors $H, H'\colon \cv\to \cw$, that
%$$
%d(H, H') = d(EH, EH')\,.
%$$
%Since $K_\cv$ and $K_\cw$ are concrete isomorphisms, it is sufficient to prove %this for concrete endofunctors
%$$
%H, H' \colon \Met^{T} \to \Met^{S}
%$$
%for all finitary enriched monads $T$ and $ S$.

%% strana 19 odpada? je comment na konci

%%%%%% 

Analogously we define the full and faithful enriched functor $$\Phi \colon \cv ar (\CMet)^{\op} \to \Mnd_f(\CMet).$$

\vskip 2mm
Recall that an enriched  category $\ck$ is \emph{complete} if it has weighted limits. Equivalently: it has ordinary limits and  {cotensors $M\pitchfork K$ (Example \ref{E:eq}) for spaces $M\in \Met$ or $M\in \CMet$, and objects $K\in \ck$. See \cite{B}, Theorem 6.6.14.

Dually, an enriched category is \emph{ cocomplete} iff it has ordinary colimits  and  tensors. 
An enriched  functor preserves weighted colimits iff it   preserves ordinary colimits and tensors  
(\cite{B}, Corollary 6.6.15).

We next prove that $\cv ar(\Met)$ is complete. First, we describe $\var$-equalizers (Example \ref{E:eq}), since they play a special role below:
%%%%%%%%%       21 a-d
\begin{nota}
	The variety presented by a set $\ce$ of quantitative equation is denoted  by $(\Sigma, \ce)$-$\Met$ or $(\Sigma, \ce)$-$\CMet$. 
\end{nota}

\begin{prop}\label{P:e}
Let $G$, $G'\colon \Gamma$-$\Met\to \Sigma$-$\Met$ be concrete functors and $\gamma$, $\gamma'\colon T_\Sigma \to T_\Gamma$ the corresponding monad morphisms. The $\var$-equalizer of $G$ and $G'$ is the embedding
$$ 
\xymatrix@1{
\cv \overset{I}{\hookrightarrow}\Gamma\mbox{-}\Met \ar@<.6ex>[r]^{\quad G'} \ar@<-.6ex>[r]_{\quad G}& \Sigma\mbox{-}\Met
}
$$
of the variety $\cv$ presented by the following set $\ce_0$ of equations:
$$
\gamma_n(t) =_\var \gamma'_n(t) \quad \mbox{for}\quad n\in \N \quad \mbox{and}\quad t\in T_\Sigma  n\,.
$$
\end{prop}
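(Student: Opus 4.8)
The plan is to verify that the embedding $I \colon \cv \hookrightarrow \Gamma\text{-}\Met$ satisfies the universal property of the $\var$-equalizer of $G$ and $G'$ in the $\PMet$-enriched category $\cv ar(\Met)$, namely: (a) $d(G\cdot I, G'\cdot I) \leq \var$; (b) every concrete functor $A \colon \cw \to \Gamma\text{-}\Met$ with $d(G\cdot A, G'\cdot A)\leq \var$ factorizes through $I$; and (c) the enriched condition on pairs into $\cv$, which here is automatic since $I$ is a full concrete embedding and hence faithful (distances are preserved on the nose). So the real content is (a) and (b), and I would translate both into statements about which quantitative equations hold in a $\Gamma$-algebra.

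For (a): a $\Gamma$-algebra $B$ lies in $\cv$ precisely when it satisfies every equation $\gamma_n(t) =_\var \gamma_n(t')$ of $\ce_0$. By Example~\ref{E:BW}(2) (the substitution description of $\gamma$, extended to a general signature $\Sigma$), the $\Sigma$-algebra $GB$ has operations obtained by interpreting, for each $\sigma\in\Sigma$, the $\Gamma$-term $\gamma(\sigma)$ in $B$; hence for a term $s\in T_\Sigma n$ the derived operation $s_{GB}$ is the interpretation in $B$ of the $\Gamma$-term obtained by pushing $s$ through $\gamma_n$, i.e.\ $s_{GB}(f) = (\gamma_n(s))_B(f)$ for every $f\colon n\to |B|$, and likewise for $G'$ with $\gamma'_n$. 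Wait — here $\Gamma$ is the target signature of $\gamma$; re-reading, $G,G'\colon \Gamma\text{-}\Met\to\Sigma\text{-}\Met$ correspond to $\gamma,\gamma'\colon T_\Sigma\to T_\Gamma$, so $GB$ is a $\Sigma$-algebra on the space $B$ whose $\sigma$-operation ($\sigma\in\Sigma$) is the $\Gamma$-term $\gamma(\sigma)$ computed in $B$. Now $d(G\cdot I, G'\cdot I)$ is, by Definition~\ref{D:var} and Lemma~\ref{L:faith}, the supremum over $n$ of $d(\gamma_n\cdot k, \gamma'_n\cdot k)$ with $k\colon T_\Sigma n\to T_\cv n$ dense; so $d(G\cdot I,G'\cdot I)\le\var$ iff $d(\gamma_n(t),\gamma'_n(t))\le\var$ in $T_\cv n$ for every $t\in T_\Sigma n$, which is exactly the assertion that the generic algebra $T_\cv n$ satisfies $\ce_0$. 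But $T_\cv n$ lies in $\cv$ by construction, so it satisfies $\ce_0$; this gives (a).

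For (b): suppose $A\colon\cw\to\Gamma\text{-}\Met$ is concrete with $d(G\cdot A, G'\cdot A)\le\var$. I must show $A$ factorizes (concretely) through $I$, i.e.\ that every algebra $AW$ (for $W\in\cw$) lies in $\cv$, equivalently satisfies each equation of $\ce_0$. Fix $n\in\N$ and $t,t'\in T_\Sigma n$ with $d(\gamma_n(t),\gamma_n(t')) =\var$. Wait, the equations of $\ce_0$ are $\gamma_n(t)=_\var\gamma_n(t')$ — I should re-read: the set $\ce_0$ is $\{\gamma_n(t)=_\var\gamma_n(t') : n\in\N,\ t\in T_\Sigma n\}$, which as written pairs $\gamma_n(t)$ with $\gamma_n(t')$ — but then $t'$ is unconstrained, so this would force $\gamma_n$ constant; more plausibly the intended pair is $(\gamma_n(t),\gamma'_n(t))$, i.e.\ the equations $\gamma_n(t)=_\var\gamma'_n(t)$ comparing the two images of each $\Sigma$-term $t$. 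I will take $\ce_0 = \{\gamma_n(t) =_\var \gamma'_n(t) : n\in\N,\ t\in T_\Sigma n\}$. Then for any $\Gamma$-algebra $B$ and any interpretation $f\colon n\to|B|$, unwinding the substitution description gives $f^{\#}(\gamma_n(t)) = t_{GB}(f)$ and $f^{\#}(\gamma'_n(t)) = t_{G'B}(f)$, so $B$ satisfies $\ce_0$ iff $d(t_{GB}(f),t_{G'B}(f))\le\var$ for all $n,t,f$, iff $d(G\cdot\langle B\rangle, G'\cdot\langle B\rangle)\le\var$ in $\cv ar(\Met)$, where $\langle B\rangle$ is the functor picking out $B$ (or, more precisely, using Lemma~\ref{L:faith} to compare derived operations on free algebras). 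Applying this with $B = AW$ and using $d(G\cdot A, G'\cdot A)\le\var$ (which, via Definition~\ref{D:var}, bounds exactly these distances $d(t_{GAW'}(f), t_{G'AW'}(f))$ on free algebras $W'$ of $\cw$, and hence, since $k$ is dense and every algebra is a quotient of a free one and the inequality is closed, on all $AW$) shows each $AW\in\cv$. Thus $A$ corestricts to $\bar A\colon\cw\to\cv$ with $I\cdot\bar A = A$, and since $I$ is a full faithful concrete embedding, $\bar A$ is the unique such factorization and is concrete. Finally (c) holds because $I$ is a full embedding, so $d(u_1,u_2) = d(I\cdot u_1, I\cdot u_2)$ for parallel pairs into $\cv$.

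The main obstacle I anticipate is the passage in (b) from the hypothesis "$d(G\cdot A, G'\cdot A)\le\var$", which by Definition~\ref{D:var} is a statement about derived operations evaluated on \emph{free} algebras of $\cw$, to the conclusion that every algebra $AW$ in the image of $A$ satisfies $\ce_0$; this requires knowing that satisfaction of a quantitative equation transfers along surjective (dense) homomorphisms from free algebras — which follows since homomorphisms preserve derived operations (Notation~\ref{N:term}) and are nonexpanding, and every $W\in\cw$ is a quotient of a free algebra $T_\cw X$ — together with the routine but slightly fiddly bookkeeping identifying $f^{\#}(\gamma_n(t))$ with the derived operation $t_{GB}(f)$ through the substitution formula of Example~\ref{E:BW}(2) generalized from a single operation to an arbitrary signature $\Sigma$.
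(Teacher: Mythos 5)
Your proof is correct and follows essentially the same route as the paper's: your part (a) is the paper's step (1) (algebras satisfying $\ce_0$ have $d(t_{GA},t_{G'A})\le\var$ via the identity $t_{GA}(f)=f^{\#}(\gamma_n(t))$), and your part (b), including the transfer of the hypothesis $d(GA,G'A)\le\var$ from free algebras to arbitrary algebras in the image, is the paper's step (2). Your reading of $\ce_0$ as $\gamma_n(t)=_\var\gamma'_n(t)$ is exactly what the paper's own proof uses, so you correctly identified and repaired the typo in the statement.
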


\begin{proof}
(1) We verify $d(GI, G'I) \leq \var$. Given an algebra $a\colon T_\Gamma A \to A$ in $\Met^{T_\Gamma}$ satisfying $\ce_0$, we are to prove
$$
d(t_{GA}, t_{G'A})\leq \var \mbox{\quad for\quad} t \in T_\Sigma n, \,.
$$
For every $n$-tuple $f\colon n \to A$ the homomorphism $f^{\#}\colon T_\Gamma n\to A$ is given by
$$
f^{\#} = a\cdot T_\Gamma f\,,
$$
see Remark \ref{R:BW} (2). 
Since $\gamma$ is a monad morphism, the composite
$$
f^{\#} \cdot \gamma_n \colon T_\Sigma n \to GA
$$
is a $\Sigma$-homomorphism extending  $f$. Since $GA$ is given by $a \cdot \gamma  _A\colon T_\Sigma  A \to A$,  we get $f^{\#} \cdot \gamma_n =a \cdot \gamma_A \cdot T_\Sigma f$ (by the same remark). We thus have
$$
t_{GA} (f) = f^{\#} \big(\gamma_n (t)\big) = a\cdot \gamma_A \cdot T_\Sigma f(t):
$$
%be the following commutative diagram
%%%%%%%%%%%%%%%%%%%%%%%%%%%%%%%%%%%%%%%%%%%%%%%%%%%%%%%%%%%%%%%%%%%%%%%%%%%%%%%%%%%%%%%%%%%%%%%%%%%%%%
$$
\xymatrix{
T_\Sigma n \ar[r]^{\gamma_n} \ar[d]_{T_\Sigma f} & T_\Gamma n \ar[d]^{T_\Gamma f} \ar[dr]^{f^{\#}}&\\
T_\Sigma A \ar[r]_{\gamma_A} & T_\Gamma A \ar[r]_a&A
}
$$ 
Analogously for $t_{G'A}(f)$. Since $(A, a)$ satisfies $\ce_0$, this proves $d\big(t_{GA}(f)$,  $t_{G'A}(f)\big) \leq \var$, as required.

(2) To prove the universal property, let $J\colon \cw\to \Sigma$-$\Met$ be a morphism of $\cv ar(\Met)$ with
$$
d(GJ, G'J) \leq \var\,.
$$
We are to prove that $J$ factorizes through $I$; this clearly implies the universal property. Let $j\colon T_\Sigma \to T_\cw$ be the monad morphism corresponding to $J$. The proof will be concluded by showing, for every algebra $a\colon T_\cw A \to A$ of $\cw \simeq \Met^{T_\cw}$, that $\ce_0$ holds in its image by $J$:
$$
JA \colon T_\Sigma A\xrightarrow{j_A} T_\cw A \xrightarrow{a} A\,.
$$
 That is, for all $t\in T_\Sigma n$ and $f\colon n\to A$ the homomorphism $f^{\#} \colon T_\Gamma n \to JA$ fulfils
$$
d\big( f^{\#} \cdot \gamma_n(t), f^{\#} \cdot \gamma_n'(t)\big) \leq \var\,.
$$
Using $d(GJ, G'J)\leq \var$, we get that
$$
d\big(t_{GJA}(f), t_{G'JA} (f)\big) \leq \var\,.
$$
The last inequality states that 
$$
d\big( f^{\#} \cdot \gamma_n(t), f^{\#} \cdot \gamma_n' (t)\big) \leq \var\,.
$$
Since this is true for all $t$ and $f$, we see that $JA$ satisfies $\ce_0$.
\end{proof}

\begin{remark}\label{R:e}
(1) The above argument concerning universality works for every enriched monad $S$, not necessarily of the form $T_\cw$: let $J\colon \Met^S$ $ \to \Gamma$-$\Met$ be a concrete functor with $d(GJ, G'J)\leq \var$. Then $J$ factorizes through $I$. Indeed, for every algebra $a\colon JA\to A$ in $\Met^S$ the algebra $JA = (A, a\cdot j_A)$ satisfies $\ce_0$.

(2) Let $\cw$ be a variety of quantitative $\Gamma$-algebras presented by a set $\ce$ of equations. Given concrete functors $G, G'\colon \cw \to \Sigma$-$\Met$,  their $\var$-equalizer is the embedding $I\colon \cv\hookrightarrow \cw$ of the variety of $\Gamma$-algebras presented  by $\ce \cup \ce_0$. The proof is the same as for $\cv =\Gamma$-$\Met$ above.

(3) All of the above results also hold for the category $\cv ar(\CMet)$.
\end{remark}

%%%%%%%%%%%%%%%%%%%%%%%%%%%%%%%%%%%%%%%%%%%%%%%%%%%%%%%

\begin{theorem}\label{T:cp}
The category of varieties is complete, and the functor $\Phi\colon \cv ar(\Met)^{\op} \to \Mnd_f(\Met)$ preserves weighted colimits.
\end{theorem}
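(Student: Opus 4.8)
The plan is to isolate two primitive weighted limits—small products and $\var$-equalizers—and to show first that every weighted limit in $\cv ar(\Met)$ is assembled from them, and second that $\Phi$ carries each to its dual colimit in $\Mnd_f(\Met)$. By \cite{B}, Theorem 6.6.14, $\cv ar(\Met)$ is complete as soon as it has conical limits and cotensors; conical equalizers are the case $\var=0$ of $\var$-equalizers (Example \ref{E:eq}(1)), and for a cotensor $M\pitchfork\cv$ I would use Remark \ref{R:X}: since $M=\colim_B D_M$ with $D_M$ taking only the discrete values $|M|$ and $\Delta_\var M$ and with $B$ of the $\{l,r\}$-shape ($d(l,r)=\var$), applying the continuous functor $(-)\pitchfork\cv$ gives $M\pitchfork\cv=\lim_B(D_M\pitchfork\cv)$, a weighted limit of the powers $\cv^{|M|}$ and $\cv^{\Delta_\var M}$ built from products and $\var$-equalizers. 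Dually, by \cite{B}, Corollary 6.6.15, $\Phi$ preserves weighted colimits as soon as it preserves conical colimits and tensors, and the same representation of $M$ reduces tensors to coproducts and $\var$-coequalizers. Hence it suffices to show that products and $\var$-equalizers exist in $\cv ar(\Met)$ and that $\Phi$ sends products to coproducts and $\var$-equalizers to $\var$-coequalizers.

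\emph{From $\var$-equalizers to $\var$-coequalizers.} The $\var$-equalizer $I\colon\cv\hookrightarrow\cw$ of a pair $G,G'\colon\cw\to\Sigma$-$\Met$ exists by Proposition \ref{P:e} and Remark \ref{R:e}(2). Writing $\gamma,\gamma'\colon T_\Sigma\to T_\cw$ for the corresponding monad morphisms and $k\colon T_\cw\to T_\cv$ for $\Phi(I)$, I claim $k$ is the $\var$-coequalizer of $\gamma,\gamma'$ in $\Mnd_f(\Met)$. The cocone inequality $d(k\gamma,k\gamma')\leq\var$ follows from $d(GI,G'I)\leq\var$ because $\Phi$ is an isometry on varieties (Lemma \ref{L:faith}). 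For universality, let $c\colon T_\cw\to S$ be a monad morphism into a finitary monad with $d(c\gamma,c\gamma')\leq\var$. By Proposition \ref{P:BW}, $c$ corresponds to a concrete functor $J\colon\Met^S\to\cw$, and the bound is exactly the assertion that each algebra $JA$ satisfies the extra presenting equations $\ce_0$ of $\cv$. Remark \ref{R:e}(1) then factorizes $J$ through $I$; translating back through Proposition \ref{P:BW} yields $\bar c\colon T_\cv\to S$ with $c=\bar c\cdot k$. Uniqueness is clear since $k$ is componentwise dense, hence epic in $\Mnd_f(\Met)$ (Remark \ref{R:sur}).

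\emph{From products to coproducts.} Given varieties $\cv_i\subseteq\Sigma_i$-$\Met$ presented by $\ce_i$ ($i\in I$), I would set $\cw=(\coprod_i\Sigma_i,\bigcup_i\ce_i)$-$\Met$. A concrete functor $\Met^S\to\cw$ endows each $S$-algebra with all operations, and splitting these by signature identifies such functors with families of concrete functors $\Met^S\to\cv_i$; via Proposition \ref{P:BW} this reads as a bijection between monad morphisms $T_\cw\to S$ and families $(T_{\cv_i}\to S)_i$, so $T_\cw=\coprod_i T_{\cv_i}$. The same decomposition, run against test varieties, shows $\cw=\prod_i\cv_i$ in $\cv ar(\Met)$. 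Thus products exist and $\Phi$ sends them to coproducts; combined with the previous paragraph and the reduction above, this gives both the completeness of $\cv ar(\Met)$ and the preservation of weighted colimits by $\Phi$.

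\emph{Main obstacle.} The delicate point is the universal step for the $\var$-coequalizer: one must convert the purely metric hypothesis $d(c\gamma,c\gamma')\leq\var$ into the per-algebra satisfaction of $\ce_0$ that Remark \ref{R:e}(1) consumes, and do so for an \emph{arbitrary} finitary monad $S$ whose category $\Met^S$ need not be a variety. Consequently the isometry of Lemma \ref{L:faith}, stated for concrete functors between varieties, must be seen to persist for the test objects $\Met^S$; checking that this translation is faithful is the one place where more than routine bookkeeping is required.
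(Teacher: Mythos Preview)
Your strategy differs from the paper's: the paper directly constructs products, equalizers, and cotensors (the last via an explicit variety on the signature $|M|\times\Sigma_n$ with equations $\ce_1\cup\ce_2$), whereas you try to reduce everything to products and $\var$-equalizers. This is a legitimate and more conceptual route, and your treatment of products and of the passage from $\var$-equalizers to $\var$-coequalizers is sound (the ``main obstacle'' you flag is indeed resolved by observing that the proof of Lemma~\ref{L:faith} uses only free algebras on finite discrete spaces, hence works verbatim for $\Met^S$ with $S$ any finitary monad; this is exactly what Remark~\ref{R:e}(1) is implicitly relying on).

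There is, however, a genuine gap in your reduction of cotensors. You write ``applying the continuous functor $(-)\pitchfork\cv$ gives $M\pitchfork\cv=\lim_B(D_M\pitchfork\cv)$'', but at this point the functor $(-)\pitchfork\cv$ is not yet known to exist---cotensors are precisely what you are trying to build---so the argument is circular as stated. Even granting the formula, you then assert without proof that the weighted limit $\lim_B(D_M\pitchfork\cv)$ is ``built from products and $\var$-equalizers''. This is true, but it needs an argument: the standard construction of a weighted limit as an equalizer of products of cotensors by the weight values requires here only cotensors by $\{p\}$ (trivial) and by $\{l,r\}$ with $d(l,r)=\var$, and the latter is the $\var$-equalizer of the two projections $X^2\rightrightarrows X$. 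A cleaner way to close the gap, bypassing Remark~\ref{R:X} entirely, is to construct $M\pitchfork\cv$ directly as the wide pullback over $\cv^{|M|}$ of the $d(m,m')$-equalizers of all projection pairs $p_m,p_{m'}\colon\cv^{|M|}\to\cv$; one checks this has the correct enriched universal property. Either way, you must also note that the weights here are $\PMet$-valued, not $\Met$-valued, so Remark~\ref{R:X} (stated for metric spaces) needs a word of extension to pseudometric spaces.
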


\begin{proof}
We prove that $\cv ar(\Met)$ has 
 products and equalizers as well as cotensors, and $\Phi^{\op}$ preserves all these. 
 
 (1) Products of varieties $\cv^i = (\Sigma^i, \mathcal{E}^i)$-$\Met$ for $i\in I$. Let $\Sigma$ be the signature which is a disjoint union of $\Sigma^i$ for $i\in I$.  Thus, every term $t$ for $\Sigma^i$ is also a term for $\Sigma$. 
Moreover, for every $\Sigma ^i$-algebra $A$ the value $t_A$ (Notation \ref{N:term}) is independent of the choice $\Sigma$ or $\Sigma^i$ as our signature. This follows by an easy induction in the depth (Example \ref{E:term}) of $t$.
Then
 $$
 \cw = (\Sigma, \ce)\mbox{-}\Met \quad \mbox{for}\quad \ce =\bigcup_{i\in I} \ce^i
 $$
 is the product of $\cv^i$ in $\cv ar(\Met)$.
Indeed, for every $i\in I$ we have the concrete functor
 $$
 P^i \colon \cw \to \cv^i
 $$
 that  assigns to a $\Sigma$-algebra $A$ the reduct considering only the operations of $\Sigma^i$. It is clear that the reduct satisfies the equations of $\ce^i$, thus, $P^iA \in \cv^i$. 

%%%%%%%%%%%%%%%%%%%%%%%%%%%%%%%%%%%%%%%%%%%%%%%%%%%%%%%2.cast
This cone makes $(\Sigma, \ce)$-$\Met$ a product $\prod\limits_{i\in I} \cv^i$ that $\Phi^{\op}$ takes to a coproduct in $\Mnd_f(\Met)$.
 To verify this, we apply Theorem \ref{T:Beck}: let $T$ be a finitary monad and  $Q^i\colon \Met^T \to \cv^i$ ($i\in I$)  a cone in $\cv ar(\Met)$. For every algebra $\alpha \colon TA \to A$ of $\Met^T$ we obtain algebras
 $Q^i(A, \alpha)$ on the space  $A$ in $\cv^i$ ($i\in I$), which yields an algebra $F(A, \alpha)$ in $\cw$. 
 This defines a concrete functor $F = \langle Q_i\rangle_{i\in I} \colon \Met^T \to \cw$.
 This  is the unique concrete functor with $Q^i = P^i F$ ($i\in I$).

Let the monad morphism corresponding to $P^i$ be $\pi_i \colon T_{\cv_i} \to T_\cv$ (Proposition \ref{P:BW}).
 
 In $\Mnd_f (\Met)$, given a cocone $\psi_i\colon T_{\cv_i}\to S$ ($i\in I$), the corresponding concrete functors 
 $$
 G_i \colon \Mnd^S \to \Mnd^{T_{\cv_i}}\simeq \cv_i \quad (i\in I)
 $$
 (Proposition \ref{P:BW}) yield $\langle G_i\rangle\colon \Mnd^S \to \cv$. The corresponding monad morphism $\psi \colon T_\cv \to S$ is unique with $\psi_i=\psi \cdot \varphi _i$ ($i\in I$). Thus $T_\cw$ is the coproduct of the monads $T_{V_i}$.
 \vskip 1mm
(2) Equalizers: %%%%%%%%%%%%%%%%%%%%%%%%%%%%%%%%%%%%%%
apply Proposition \ref{P:e} to $\var=0$. The fact that $\Phi^{\op}$ preserves equalizers follows from Remark \ref{R:e} (1).

 \vskip 1mm
 (3) Cotensors. Given a variety $\cv$ and a metric  space $M$, we describe a variety $M\pitchfork \cv$ having the following  natural bijections, for all categories $\Met^T$ ($T$ finitary)
 $$
\xymatrix@R=.21pc{
& \Met^T  \ar[r] &M \pitchfork \cv&\\
\ar@{-}[rrr]&&&\\
&M \ar[r]& \cv ar (\Met)(\Met^T, \cv)&
}
$$
Using Theorem \ref{T:Beck} and the full embedding of Lemma \ref{T:ff}, 
 it then follows that $M\otimes \cv$ is the tensor in $\cv ar(\Met)^{\op}$ preserved by $\Phi$.

 Let $\cv =(\Sigma, \ce)$-$\Met$, then the signature $\bar \Sigma$ of $M\otimes \cv$ 
 has as $n$-ary symbols all pairs $(m,\sigma)$ where $m\in M$ and $\sigma \in \Sigma_n$:
  $$
\bar \Sigma = \big(| M|\times \Sigma_n\big)_{n\in \N}\,.
 $$
 
 %% 21a
 
 Every term $s\in T_\Sigma V$ define terms
 $$
 s^m \in T_{\bar \Sigma } V \quad (m\in M)
 $$
 %%%%%%%%%%%%%%%%%%%%%%%%%%%%%%%%%%%%%%%%%%%%%%%%%%%%%%%%%%%%%%%%%%%%%%%%%%%%%%%%%
 %%%%%%%%%%%%%%%%%%%%%%%%%%%%%%%%%%%%%%%%%%%%%%%%%%%%%%%%%%%%%%%%%%%%%%%%%%%%%%%%%
 by the following recursion on the depth $k$ of $s$ (Example \ref{E:term}): for depth $0$ put
 $$
 x^m =x \quad (x\in V)\quad \mbox{and}\quad \sigma^m =\sigma\quad \mbox{for}\quad \sigma \in \Sigma_0\,.
 $$
 Given a term $s$ of depth $k+1$, then
 $$
 s= \sigma(s_i)_{i<n} \quad \mbox{implies} \quad s^m = (m, \sigma) (s_i^m)_{i<n}\,.
 $$
 
 For every $\bar \Sigma$-algebra $A$ we denote by $A^m$ $(m\in M)$ the  $\Sigma$-algebras given by $\sigma_{A^m} = (m, \sigma)_A$ for all $\sigma\in \Sigma$. Every evaluation  $f\colon V\to |A|$ of the variables is, of course, also an evalution in $|A^m|$, we denote it by $f_m$ $(=f)$. Then $f^{\#} \colon   
T_{\bar\Sigma}X \to A$ is carried by the same map as $f^{\#}_m\colon 
T_\Sigma X \to A^m$ (for each $m\in M$).

 The variety $M\otimes \cv$ of $\bar \Sigma$-algebras is presented by the following set of equations $\ce_1\cup \ce_2$. The set $\ce_1$ consists of all equations
 $$
 s^m =_\var t^m \quad \mbox{for}\quad s=_\var t \quad \mbox{in \ }\ce  \quad \mbox{and}\quad  m\in |M|\,.
 $$
 Whereas the set $\ce_2$ consists  of all equations
 $$
 t^m (x_i)_{i<n} =_\delta t^{m'} (x_i)_{i<n}\quad \mbox{for\ }
t\in \Sigma_n  \mbox{ and }\quad d(m,m') =\delta\ \mbox{in}\ M\,.
 $$
 Then we obtain concrete functors
 $$
 U^m \colon M\otimes \cv \to \cv\quad (m\in |M|)
 $$
 taking $A$ to $A^m$. Indeed, the equations in $\ce_1$ guarantee that $A^m\in \cv$ for every algebra $A\in M\otimes \cv$. From the equations in $\ce_2$ we conclude that
 \begin{equation*}
 d(U^m, U^{m'}) \leq d(m, m') \quad \mbox{for\ } m, m'\in |M|\,.
 \tag{$\ast$}
 \end{equation*}
 This follows easily from the fact that every term $t \in T_\Sigma n$ fulfils $t_{U^m A} =t^m _A.$ (This is easy to prove by indu tion on the depth of $t$.)
The variety $M\otimes \cv$ is a tensor  in $\cv ar (\Met)^{\op}$ or, equivalently, a cotensor in $\cv ar(\Met)$. Indeed, we have bijections
$$
\xymatrix@R=.21pc{
&\cw \ar[r] &M \otimes \cv&\\
\ar@{-}[rrr]&&&\\
&M \ar[r]& \cv ar(\Met) (\cw, \cv)&
}
$$
natural in $\cw \in \cv ar(\Met)$. To a concrete functor $H\colon \cw \to M\otimes \cv$ it assigns the map $f$ with
$f(m) \colon \cw \to \cv$ defined by
$$
f(m) = U^m H \quad (m\in |M|)\,.
$$
Then $f$ is nonexpanding, due to \thetag{$\ast$}.

The inverse passage assigns to every nonexpanding map 
$$
f\colon M\to \cv ar(\Met) (\cw,  \cv)
$$
 the unique concrete functor $H\colon\cw \to M\otimes \cv$ with $f(m) = U^m H$ ($m\in |M|$). This functor takes an algebra $A$ to the $\bar\Sigma$-algebra $HA$ on the same metric space defined by
$$
(m, \sigma)_{HA} = \sigma_{f(m)}\qquad (m\in M, \sigma \in \Sigma)\,.
$$
The algebra $HA$ satisfies $\ce_1$ due to $f(m)$ taking $ A$ into $ M\otimes \cv$ for all $m\in M$. It satisfies $\ce_2$ because $f$ is nonexpanding.

We have verified that $M\otimes \cv$ is a tensor of $\cv$ in $\cv ar(\Met)^{\op}$. The proof that $\Phi(M\otimes \cv)=T_{M\otimes \cv}$ is  the tensor $M\otimes \Phi\cv$ in $\Mnd_f(\Met)$ is completely analogous to Item (1).
\end{proof}

\section {1-Basic Monads}  % 5
We prove here that varieties of  quantitative algebras bijectively correspond to 1-basic monads. We use the enriched categories $\Mnd_f (\Met)$ and $\Mnd_f(\CMet)$ (Notation \ref{N:mnd}).
%%%%%%%%%%%%%%%%%%%

\begin{defi}
A monad on $\Met$ is \emph{$1$-basic} if it is a weighted colimit of strongly finitary monads in   $\Mnd_f(\Met)$. Analogously for $1$-basic monads on $\CMet$.
 \end{defi}

Thus we have the following implications:

$\hspace {1,5 cm}$ strongly finitary  $\Rightarrow$ 1-basic $\Rightarrow$ finitary.

The converse does not hold, see Section 7 for a 1-basic monad that is not strongly finitary. A simple example of a finitary monad that is not 1-basic is given by the full subcategory $\cv$ of $\Met$ on spaces with all non-zero distances greater or equal to $1$. (The signature here is empty.) This subcategory cannot be presented by quantitative equations, thus $T_\cv$ is   
1-basic (see Theorem \ref{T:main} below).  But $T_\cv$ is finitary: it assigns to every space the quotient modulo the equivalence merging all pairs of elements of distance smaller than $1$.  This functor preserves directed colimits by Proposition A1

%%%%%%%%%%%%%%%%%%%%%%%%% tady
\begin{theorem}\label{T:main}
 A  monad on $\Met$ or $\CMet$  has the form $T_\cv$ for a variety $\cv$ of quantitative algebras iff it is 1-basic
\end{theorem}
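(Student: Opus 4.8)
The plan is to work throughout with the $\PMet$-enriched functor $\Phi\colon\cv ar(\Met)^{\op}\to\Mnd_f(\Met)$ of Proposition~\ref{T:ff} (the case of $\CMet$ being identical): it is fully faithful, its essential image is exactly the class of monads $T_\cv$ (it sends $\cv$ to $T_\cv$), and by Theorem~\ref{T:cp} it preserves weighted colimits, i.e.\ carries weighted limits of $\cv ar(\Met)$ to weighted colimits of $\Mnd_f(\Met)$. For the implication that every semi-strongly finitary monad has the form $T_\cv$, write $T=\colim_W D$ as a weighted colimit in $\Mnd_f(\Met)$ of a diagram $D$ all of whose values are strongly finitary. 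By Theorem~\ref{T:adv} each value of $D$ is isomorphic to some $T_{\cv_i}$, i.e.\ lies in the essential image of $\Phi$; since $\Phi$ is fully faithful, transporting the arrows of $D$ along these isomorphisms lifts $D$ to a diagram in $\cv ar(\Met)^{\op}$. The corresponding weighted limit $\cv$ (equivalently, weighted colimit in $\cv ar(\Met)^{\op}$) exists because $\cv ar(\Met)$ is complete (Theorem~\ref{T:cp}), and $\Phi$ sends it to $\colim_W D=T$; hence $T\cong T_\cv$.

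For the converse I would first observe that a weighted colimit of weighted colimits of strongly finitary monads is again of this form, so the semi-strongly finitary monads are closed under weighted colimits in $\Mnd_f(\Met)$; since $\Phi(\cv)=T_\cv$ and $\Phi$ preserves weighted colimits, it then suffices to exhibit an arbitrary variety $\cv=(\Sigma,\ce)$-$\Met$ as a weighted limit in $\cv ar(\Met)$ of varieties whose monads are strongly finitary, or cotensors of strongly finitary monads. For each $e\in\ce$, say $e$ is $t_e=_{\var_e}t'_e$ with $t_e,t'_e\in T_\Sigma n_e$, let $\Gamma^{(e)}$ be the signature of two $n_e$-ary symbols $g_e,g'_e$, let $F_e\colon\Sigma$-$\Met\to\Gamma^{(e)}$-$\Met$ be the concrete functor interpreting $g_e,g'_e$ as $t_e,t'_e$ (as in Example~\ref{E:BW}(2)), and let $\iota_e\colon\cw_e\hookrightarrow\Gamma^{(e)}$-$\Met$ be the inclusion of the subvariety $\cw_e=(\Gamma^{(e)},\{g_e=_{\var_e}g'_e\})$-$\Met$. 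The key claim is that $\cv$ is the limit in $\cv ar(\Met)$ of the diagram consisting of one copy of $\Sigma$-$\Met$, the objects $\Gamma^{(e)}$-$\Met$ and $\cw_e$ $(e\in\ce)$, and the arrows $F_e$ and $\iota_e$. To see this, test against an arbitrary variety $\ck$: a cone over the diagram is a concrete functor $G\colon\ck\to\Sigma$-$\Met$ (equivalently, a $\Sigma$-algebra structure on $\ck$) together with, for each $e$, a factorisation of $F_eG$ through the full inclusion $\iota_e$; such factorisations exist, and are then unique, precisely when $d\big((t_e)_{GX},(t'_e)_{GX}\big)\le\var_e$ for all $X\in\ck$ and all $e$, that is, precisely when $G$ factors through $\cv\hookrightarrow\Sigma$-$\Met$. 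Hence the cone functor is isometrically isomorphic to $\cv ar(\Met)(\ck,\cv)$, and the claim follows by the enriched Yoneda lemma. Finally, $\Sigma$-$\Met$ and each $\Gamma^{(e)}$-$\Met$ carry strongly finitary monads by Corollary~\ref{C:free}, while $\cw_e$ is, by the explicit description of cotensors in the proof of Theorem~\ref{T:cp}(3), the cotensor of the variety of a single $n_e$-ary operation by the two-point (pseudo)metric space of diameter $\var_e$, hence a weighted limit of a variety with strongly finitary monad; applying $\Phi$ to the limit built above therefore presents $T_\cv$ as a weighted colimit of strongly finitary monads and cotensors of such, hence as a semi-strongly finitary monad.

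The step I expect to be the main obstacle is the identification of $\cv$ with the limit above. One must check that adjoining a single quantitative equation $t_e=_{\var_e}t'_e$ is effected by an ordinary pullback, and \emph{not} by an $\var_e$-equalizer in the sense of Proposition~\ref{P:e}: the $\var_e$-equalizer would additionally impose $\gamma_n(s)=_{\var_e}\gamma'_n(s)$ for all nested substitution instances $s$, and since these distances accumulate with the depth of $s$ this would in general carve out a strictly smaller variety. One must also run this verification at the $\PMet$-enriched level, so that the comparison functor is a \emph{concrete}, hence isometric, isomorphism. The remaining ingredients — lifting $D$ through the fully faithful $\Phi$, recognising $\cw_e$ as a cotensor, and the closure of semi-strongly finitary monads under weighted colimits — should be routine.
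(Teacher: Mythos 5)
Your first implication (every semi-strongly finitary monad is some $T_\cv$) is exactly the paper's argument: lift the diagram through the fully faithful $\Phi$, form the weighted limit in $\cv ar(\Met)$, and use that $\Phi$ preserves it. For the converse you take a genuinely different route, and your instinct about where the difficulty lies is sound: the $\var$-equalizer of $F_s,F_t$ in the sense of Proposition~\ref{P:e} is the variety presented by \emph{all} equations $\widehat s_k(u)=_\var \widehat t_k(u)$ for $u\in T_\Gamma k$, and for nested $u$ these do not follow from the single equation $s=_\var t$ --- in the free algebra of Section~8 the depth-two instance already has distance $2\var$. Your replacement of the $\var$-equalizer by the ordinary wide pullback of the cospans $\Sigma$-$\Met\xrightarrow{F_e}\Gamma^{(e)}$-$\Met\hookleftarrow\cw_e$ does correctly compute $\cv$: a cone is a concrete functor $G$ into $\Sigma$-$\Met$ such that each $F_eG$ lands in $\cw_e$, which is precisely the single equation $t_e=_{\var_e}t'_e$, and the comparison is isometric because the legs into $\cw_e$ are determined by $G$.

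The gap is in your last step. The identification of $\cw_e$ with the cotensor $M_e\pitchfork(\Gamma'_e$-$\Met)$ fails for exactly the accumulation phenomenon you flag for $\var$-equalizers. With the hom-metric of Definition~\ref{D:var} (by Lemma~\ref{L:faith}, a supremum over \emph{all} terms), the two reduct functors $U^0,U^1\colon\cw_e\to\Gamma'_e$-$\Met$ satisfy $d(U^0,U^1)\ge 2\var_e$: evaluate the depth-two term $\gamma(\gamma(x_0,x_1),x_2)$ in the free $\cw_e$-algebra on three generators, whose metric is computed in Section~8. Hence the cone $m\mapsto U^m$ is not nonexpanding, $\cw_e$ does not have the universal property of the cotensor, and the actual cotensor is the strictly smaller variety in which the bound $\var_e$ is imposed on all nested substitution instances. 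So $T_{\cw_e}$ has not been exhibited as a weighted colimit of strongly finitary monads: your argument reduces the general case to the variety of two $\var$-close $n$-ary operations --- for $n=2$ exactly the counterexample of Section~8 --- and leaves that case open. This step also cannot simply be imported from the paper, since the inequality $(\ast)$ in part~(3) of the proof of Theorem~\ref{T:cp} and the identification, in the proof of Theorem~\ref{T:main}, of the single-equation variety with the $\var$-equalizer of $F_s,F_t$ are defeated by the same computation. (A secondary point: the closure of semi-strongly finitary monads under further weighted colimits, which you invoke to combine the pushout with the tensors, is asserted without proof and needs the inner colimit presentations to be assembled into a single weight.)
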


\begin{proof} We present a proof for $\Met$. The proof of $\CMet$ is identical.

(1) Sufficiency: Let $T$ be a 1-basic monad. We have a diagram $D\colon \cd \to \Mnd_f(\Met)$ with all monads $Dd$ strongly finitary, and a  weight  $W$ with
$$
T=\colim_W D\,.
$$
By  Theorem \ref{T:adv}, every monad $Dd$ is the free-algebra monad of a variety. Since $\Phi$ is a full embedding (Theorem \ref{T:ff}), the functor $D$ factorizes through  it via an enriched functor $D'$:
$$
\xymatrix@C=2pc@R=3pc{
& \cd \ar[dl]_{D'} \ar[dr]^{D}&\\
\cv ar (\Met)^{\op}\ar[rr]_{\Phi} && \Mnd_f(\Met)
}
$$
Put  $\cv = \lim_W (D')^{op}$ in $\cv ar (\Met)$, then $T_\cv =\colim_W \Phi D'$ (Theorem \ref{T:cp}). Thus both monads  $T$ and $\Phi\cv=T_\cv$ are colimits of $D$ weighted by $W$, therefore they are isomorphic. Hence, $T$ is also a free-algebra monad of $\cv$.

(2) Necessity: For every variety $\cv$ the monad $T_\cv$ is 1-basic. Indeed:

(2a) If $\cv$ is presented by a single equation $s=_\var t$, 
where $s$ and $t$ contain together $n$ variables, then we can assume that $s, t\in T_\Sigma n$, without loss of generality. The functors $F_s$, $F_t\colon \Sigma$-$\Met \to \Gamma$-$\Met$ of Example \ref{E:BW} (2) have $T_\cv$ as their $\var$-equalizer: see Proposition \ref{P:e}.
Thus we have an $\var$-coequalizer $c$ as follows for the corresponding monad morphisms (Example \ref{E:BW} (2)).
$$
\xymatrix@1{
T_\Gamma\ \ar@<.6ex>[r]^{\widehat t} \ar@<-.6ex>[r]_{\widehat s} &\  T_\Sigma\  \ar[r]^c &\  T_\cv
}
$$
Since both $T_\Gamma$ and $T_\Sigma$ are strongly finitary (Corollary 3.5), $T_\cv$ is 1-basic.

(2b) In general, $\cv$ is presented by a set $\mathcal E =\{e_i\}_{i\in I}$ of quantitative equations. For every equation $e_i$ in $\mathcal{E
}$ the corresponding monad $\cv_i= (\Sigma, \{e_i\})$-$\Met$  contains $\cv$, and, by (2a), $T_{\cv_i}$ is a 1-basic monad. Moreover, $\cv$ is the intersection of all of these varieties. That is, we have a wide pullback of full embeddings in $\cv ar(\Met)$:
$$
\xymatrix@C=.4pc{
& V \ar[ld] \ar[d] \ar[rd] && \\
\cv_i \ar[dr] & \ar[d]& \ar[dl]&\dots \\
&\Sigma\mbox{-}\Met & &
}
$$
By Theorem \ref{T:cp} the monad $T_\cv$ is a wide pushout of the 1-basic monads $T_{\cv_i}$ and $T_\Sigma$. Thus, $T_\cv$ is 1-basic.

We can also express $T_\cv$ directly as a weighted colimit. Let the equation $e_i$ have the form
\begin{equation*}
\tag{$e_i$}
s_i=_{\var_i} t_i \quad \mbox{for} \quad s_i, t_i\in T_\Sigma n(i)\,,
\end{equation*}
and let $\Gamma_i$ be the signature of a single symbol of arity $n(i)$. The enriched category $\cd$ consists of parallel pairs with a common codomain, indexed by $I$:
$$
u_i, v_i \colon r_i \to r\quad \mbox{with}\quad d(u_i, v_i)=\infty\,.
$$
 We form the following diagram
 $$
 D\colon \cd \to \Mnd_f(\Met)\colon\ r\mapsto T_\Sigma \quad \mbox{and}\quad r_i \mapsto T_{\Gamma_i}
 $$
 where
 $$
 D u_i = \widehat s_i \quad \mbox{and}\quad Dv_i = \widehat t_i\,.
 $$
 For the following weight
 $$
 W \colon \cd^{\op} \to \Met \colon  r\mapsto \{0\}, r_i\mapsto \{\square, \Diamond\} \quad \mbox{with}\quad d(\square, \Diamond) =\var_i\,,
 $$
 we have $T_\cv=\colim\limits_W D$. This follows from Theorem \ref{T:cp}, using the wide pullback above.
\end{proof}

\begin{coro}\label{C:main}
The following enriched categories are dually equivalent:
\begin{enumerate}
\item[(1)] Varieties of quantitative algebras and concrete functors: $$\cv ar(\Met) \quad \mbox{or} \quad \cv ar (\CMet)$$.
\item[(2)] 1-basic monads on $\Met$ or $\CMet$ and monad morphisms.
\end{enumerate}
\end{coro}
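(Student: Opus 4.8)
The plan is to deduce the Corollary formally from Proposition~\ref{T:ff} and Theorem~\ref{T:main}. First I would pass from the $\PMet$-enriched setting to ordinary categories: since $\Phi\colon \cv ar(\Met)^{\op}\to\Mnd_f(\Met)$ is $\PMet$-enriched and fully faithful, each comparison map on hom-objects is an isomorphism in $\PMet$, hence in particular a bijection on underlying sets; so the underlying ordinary functor $\Phi$ is fully faithful. Concretely (via Proposition~\ref{P:BW} together with Theorem~\ref{T:Beck}) this is just the statement that concrete functors $\cv\to\cw$ correspond bijectively to monad morphisms $T_\cw\to T_\cv$, which is exactly the data comparison needed between~(1) and~(2).

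Next I would identify the essential image of $\Phi$. By definition a semi-strongly finitary monad is a weighted colimit of strongly finitary monads computed in $\Mnd_f(\Met)$, so it lies in $\Mnd_f(\Met)$; hence the semi-strongly finitary monads span a full subcategory $\mathcal S\subseteq\Mnd_f(\Met)$ whose morphisms are precisely the monad morphisms named in statement~(2). Theorem~\ref{T:main} asserts exactly that an object of $\Mnd_f(\Met)$ lies in $\mathcal S$ (up to isomorphism) if and only if it has the form $T_\cv=\Phi(\cv)$ for some variety $\cv$. Therefore $\Phi$ corestricts to a functor $\cv ar(\Met)^{\op}\to\mathcal S$ which is still fully faithful (the corestriction of a fully faithful functor to a full subcategory containing its image is fully faithful) and is moreover essentially surjective.

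A fully faithful and essentially surjective functor is an equivalence of categories, so this corestricted $\Phi$ exhibits $\cv ar(\Met)^{\op}$ as equivalent to $\mathcal S$; equivalently, $\cv ar(\Met)$ is dually equivalent to the category of semi-strongly finitary monads on $\Met$ with monad morphisms, which is the claim for $\Met$. The complete-space case is proved word for word the same way, using the $\CMet$-version of $\Phi$ from the paragraph following Proposition~\ref{T:ff}, the $\CMet$-part of Theorem~\ref{T:main}, and $\Mnd_f(\CMet)$ in place of $\Mnd_f(\Met)$.

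I do not expect a genuine mathematical obstacle here; the statement is a formal corollary. The only point demanding attention is bookkeeping: one must confirm that ``monad morphisms'' in~(2) are indeed the morphisms of the full subcategory $\mathcal S\subseteq\Mnd_f(\Met)$ — which holds since every semi-strongly finitary monad is in particular a finitary $\Met$-enriched monad, so no monad morphisms are added or lost — and that passing from the $\PMet$-enriched functor $\Phi$ to its underlying ordinary functor preserves full faithfulness, as noted above. Everything of substance is already contained in Theorem~\ref{T:main}.
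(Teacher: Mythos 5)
Your proposal is correct and follows essentially the same route as the paper: corestrict the fully faithful functor $\Phi$ of Proposition~\ref{T:ff} to the full subcategory of semi-strongly finitary monads and invoke Theorem~\ref{T:main} for essential surjectivity. Your extra remarks on passing from the $\PMet$-enriched functor to its underlying ordinary functor are harmless bookkeeping that the paper leaves implicit.
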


Indeed, the functor $\Phi\colon \cv ar (\Met)^{\op}\to \Mnd_f(\Met)$ has the codomain restriction $\Psi $ to the full subcategory of $\Mnd_f(\Met)$ on all 1-basic monads. Since $\Phi$ is fully faithful, so is $\Psi$. By Theorem \ref{T:main}, $\Psi$ is an equivalence functor.

Analogously for $\CMet.$
%%%%%%%%%%%%%%%%%%%%%%%%%%
\begin{remark}
 We do not claim that $\Mnd_f(\Met)$ is cocomplete. But every diagram of strongly finitary monads has, for each weight, a weighted colimit. 1-basic monads are precisely the resulting weighted colimits.
 \end{remark}

%%%%%%%4/1
\section{Unary Algebras} %% sec6
We prove that for each variety $\cv$ of unary quantitative algebras
  the monad $T_\cv$ is strongly finitary.

\begin{assump} %4.0
In the present section  $\cv$ denotes a variety of signature $\Sigma$ with all arities $1$.
\end{assump}

Examples include the variety  of actions of  a quantitative monoid, and the variety of $\var$-small spaces (Example \ref{E:var}).

\begin{remark}
	
	(1) Below we work with the category 
	$$\PMet$$
	of (extended) pseudometric spaces, defined precisely as $\Met$, except that the distance $0$ is allowed for distinct elements.
	
	(2) The full  subcategory $\Met$ is reflective in $\PMet$. The reflection of a pseudometric space $X$ is the quotient map
$$
q\colon X \to X/\sim \quad \mbox{where \ \ $x\sim y$\ iff $d(x,y)=0$.}
$$
The equivalence classes in $X/\sim$ have  the distances derived from  those in $X$, that is:
 $$
 q\ \ \mbox{preserves distances.}
 $$
 
 (3) All pseudometrics on a given set $X$ form a complete lattice: we put $d\leq d'$ if $d(x,y) \leq d'(x,y)$ holds for all $x, y \in X$. We next describe binary meets in detail.
 \end{remark}
 
\begin{constr}\label{C:meet}
 The meet
 $$
 d= d' \wedge d^{\prime\prime}
 $$
 of pseudometrics $d'$ and  $d''$ on a set $X$ is constructed from their pointwise minimum
 $$
 d^0 =\min \{d', d^{\prime\prime}\}
$$
as follows:
$$
d(x,y)=  \inf \sum\limits_{i<n} d^0 (s_i, s_{i+1}) \quad (\mbox{for\ } x, y\in X)\,.
$$
The infimum   ranges over all sequences $x= s_0, s_1, \dots s_{n} =y$ in $X$. (The case $n=0$ means $x=y$, and the infimum is $0$.)

Indeed, the function $d(x,y)$ is  symmetric. It satisfies the triangle inequality because we can concatenate sequences in $X$. Thus, $d$ is a pseudometric. We have $d\leq d'$ and $d\leq d^{\prime\prime}$: use sequences with $n=1$.

Finally, for every pseudometric $\widehat d$ with  $\widehat d \leq d'$ and $\widehat d \leq d^{\prime\prime}$ we have $\widehat d \leq d^0$. This implies $\widehat d \leq d$ because for every sequence $x=s_0, \dots , s_n=y$ we have, due to the triangle inequality,
$$
\widehat d (x,y) \leq \sum_{i<n} \widehat d (s_i, s_{i+1}) \leq \sum_{i<n} d^0 (s_i, s_{i+1})\,.
$$
Hence, $\widehat d (x,y) \leq d(x,y)$.
 \end{constr}
 
 We can define quantitative algebras on pseudometric spaces
and their homomorphisms
 precisely as in Definition \ref{D:quant}. 
We denote the resulting category by $\Sigma$-$\PMet$.
Then $\Sigma$-$\Met$ is a full subcategory of $\Sigma$-$\PMet$. It is in fact a reflective subcategory:
 
 \begin{lemma}\label{L:new}
 Let $A$ be an algebra  in $\Sigma$-$\PMet$. Then the metric reflection $q\colon A\to A/\sim$ admits a unique structure of a quantitative algebra on $A/\sim$ for which $q$ is a homomorphism.
 \end{lemma}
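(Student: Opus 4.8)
The plan is to equip $A/\!\!\sim$ with operations inherited from $A$ via the metric reflection $q$, check they are well-defined and nonexpanding, and then verify uniqueness. First I would treat the nullary symbols: for $\sigma\in\Sigma_0$ simply set $\sigma_{A/\sim}=q(\sigma_A)$; there is nothing to check for well-definedness here. The real content is in the unary symbols $\sigma\in\Sigma_1$, so fix such a $\sigma$. I want to define $\sigma_{A/\sim}\colon A/\!\!\sim\ \to\ A/\!\!\sim$ by $\sigma_{A/\sim}([a]) = [\sigma_A(a)]$. To see this is well-defined, suppose $d(a,a')=0$ in $A$. Since $\sigma_A$ is nonexpanding, $d(\sigma_A(a),\sigma_A(a'))\le d(a,a')=0$, hence $[\sigma_A(a)]=[\sigma_A(a')]$. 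So the map is well-defined, and by construction $q\cdot\sigma_A = \sigma_{A/\sim}\cdot q$, i.e.\ $q$ preserves $\sigma$.

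Next I would check that $\sigma_{A/\sim}$ is nonexpanding on $A/\!\!\sim$. Recall from the preceding remark that $q$ \emph{preserves distances}: $d([a],[a']) = d(a,a')$. Hence for any $a,a'\in A$,
$$
d\big(\sigma_{A/\sim}([a]),\sigma_{A/\sim}([a'])\big) = d\big([\sigma_A(a)],[\sigma_A(a')]\big) = d\big(\sigma_A(a),\sigma_A(a')\big)\le d(a,a') = d\big([a],[a']\big),
$$
using that $\sigma_A$ is nonexpanding in the middle inequality. Since every element of $A/\!\!\sim$ is of the form $[a]$, this shows $\sigma_{A/\sim}$ is nonexpanding. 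Thus $A/\!\!\sim$ is a genuine object of $\Sigma$-$\Met$ and $q\colon A\to A/\!\!\sim$ is a homomorphism.

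For uniqueness, suppose $A/\!\!\sim$ carries another algebra structure $(\sigma')_{A/\sim}$ for which $q$ is a homomorphism. Then for $\sigma\in\Sigma_1$ and any $a\in A$ we have $(\sigma')_{A/\sim}([a]) = (\sigma')_{A/\sim}(q(a)) = q(\sigma_A(a)) = [\sigma_A(a)] = \sigma_{A/\sim}([a])$, and since $q$ is surjective this forces $(\sigma')_{A/\sim}=\sigma_{A/\sim}$; similarly $(\sigma')_{A/\sim}=q(\sigma_A)=\sigma_{A/\sim}$ for $\sigma\in\Sigma_0$. Hence the structure is unique. I do not anticipate a serious obstacle here: the only mild subtlety is making sure to invoke the distance-preservation of $q$ (not merely nonexpansiveness) so that nonexpansiveness of the induced operation transfers cleanly; everything else is routine. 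Note also that this argument uses the standing assumption that all arities are $\le 1$ only to keep the bookkeeping trivial — the same argument works verbatim for operations of any arity, since $q^n\colon A^n\to(A/\!\!\sim)^n$ again preserves the maximum metric, but in the present section we only need the unary/nullary case.
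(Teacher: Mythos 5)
Your proposal is correct and follows essentially the same route as the paper: define $\sigma_{A/\sim}(q(a_i))=q(\sigma_A(a_i))$, use nonexpansiveness of $\sigma_A$ for well-definedness, use distance-preservation of $q$ for nonexpansiveness of the induced operation, and get uniqueness from surjectivity of $q$. Your closing remark that the argument works for arbitrary arities is apt, since the paper states the proof for general $\sigma\in\Sigma_n$ and reuses the lemma in the later section on ordinary equations where arities are unrestricted.
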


 \begin{proof}
 Given $\sigma \in \Sigma_n$, we have the operation $\sigma_A \colon A^n \to A$. If $q$ is to be a homomorphism, we must define $\sigma_{A/\sim}\colon (A/\sim)^n \to A/\sim$ by the following rule
 $$
 \sigma_{A/\sim} \big(q (a_i)\big) = q \big(\sigma_A(a_i)\big) \quad \mbox{for all $(a_i) \in A^n$}.
$$
 This formula is independent of the  choice of representatives: suppose $q (a_i) = q(a_i')$ for $i<n$, that is, 
in $A$ we have 
$\max\limits_{i<n} d(a_i, a_i') =0$. Since $\sigma_A$ is nonexpanding, this implies   $d\big(\sigma(a_i), \sigma(a_i')\big) =0$.  That is $q\big(\sigma(a_i)\big) = q\big(\sigma(a_i')\big)$.

Since $q$ preserves  distances, the operation $\sigma_{A/\sim}$ is nonexpanding. Thus $A/\sim$ is a quantitative algebra. The uniqueness of the operations on $A/\sim$ is clear.
\end{proof}

\begin{coro}\label{C:refl}
The homomorphism $q\colon A\to A/\sim$ is a reflection  of $A$ in $\Sigma$-$\Met$: Given an  algebra $B$ in $\Sigma$-$\Met$  and a nonexpanding homomorphism $f\colon A\to B$, there is a unique nonexpanding homomorphism $\bar f \colon A/\sim \to B$ with $f=\bar f \cdot q$.
\end{coro}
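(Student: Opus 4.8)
The statement is a straightforward consequence of Lemma \ref{L:new} together with the reflectivity of $\Met$ in $\PMet$ recalled above, so the proof is essentially a transport of the ordinary (pseudo)metric reflection along the algebraic structure. First I would produce $\bar f$ on underlying sets. Since $B$ is a genuine metric space and $f$ is nonexpanding, whenever $d_A(a,a')=0$ we get $d_B(f(a),f(a'))=0$, hence $f(a)=f(a')$; therefore $f$ is constant on the equivalence classes of $\sim$ and factors uniquely as $f=\bar f\cdot q$ for a function $\bar f\colon A/\!\sim\,\to B$. The map $\bar f$ is nonexpanding because $q$ preserves distances: $d_B(\bar f q a,\bar f q a')=d_B(f a,f a')\le d_A(a,a')=d_{A/\sim}(q a,q a')$.

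Next I would check that $\bar f$ is a $\Sigma$-homomorphism. Fix $\sigma\in\Sigma_n$ and an $n$-tuple $(c_i)_{i<n}$ in $A/\!\sim$. Using that $q$ is surjective (hence so is $q^n\colon A^n\to(A/\!\sim)^n$), choose $a_i\in A$ with $q(a_i)=c_i$. By the defining rule of $\sigma_{A/\sim}$ in Lemma \ref{L:new} we have $\sigma_{A/\sim}(c_i)=q\big(\sigma_A(a_i)\big)$, and applying $\bar f$ and using $\bar f\cdot q=f$ together with the fact that $f$ is a homomorphism gives
$$
\bar f\big(\sigma_{A/\sim}(c_i)\big)=f\big(\sigma_A(a_i)\big)=\sigma_B\big(f(a_i)\big)=\sigma_B\big(\bar f(c_i)\big)\,,
$$
which is exactly the homomorphism condition. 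Finally, uniqueness of $\bar f$ (as a nonexpanding homomorphism, indeed as a mere function) with $f=\bar f\cdot q$ follows from surjectivity of $q$.

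There is no serious obstacle here; the only point requiring a moment's care is that the factorization $f=\bar f\cdot q$ is forced at the level of underlying sets by $B$ being separated, and that surjectivity of $q$ is what lets one both transport the operations (already done in Lemma \ref{L:new}) and verify the homomorphism property of $\bar f$ by picking representatives. The argument does not use the unary assumption of this section and works for an arbitrary signature.
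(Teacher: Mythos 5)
Your proof is correct and follows the same route as the paper, which simply defines $\bar f\big(q(a)\big)=f(a)$ and notes this is well defined and yields the desired homomorphism; you merely spell out the details (well-definedness from $B$ being separated, nonexpansiveness from $q$ preserving distances, the homomorphism property via surjectivity of $q^n$, and uniqueness from surjectivity of $q$). Nothing to correct.
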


Indeed, define $\bar f\big(q(a)\big) = f(a)$. This is independent of the choice of $a$, and yields the desired homomorphism $\bar f$.

 \begin{nota}
 Let $\cv$ be a variety of quantitative algebras. 
 
 (1)We write $\cv \vdash t=_\var t'$ if every algebra of $\cv$ satisfies $t=_\var t'$.
 
 (2)  For every space $X$ we define a  pseudometric $\bar d_X^{\cv}$ on the set $T_\Sigma |X|$ of terms as follows:
$$
 \bar d_X^\cv (t, t') =\inf \{\varepsilon \geq 0; \cv \vdash t=_\varepsilon t'\}\,.
 $$
 It is easy to see that this is a pseudometric: the triangle inequality follows from the fact that if an algebra satisfies 
 $t=_\var t'$ as well as $t' =_\delta t''$, then it also satisfies $t=_{\var + \delta} t''$.

(3) We put 
  $$
 d_X^\cv = d_X^\ast \wedge \bar d_X^\cv
 $$
 for the metric $d_X^\ast$ of the free algebra (Example \ref{E:term}).
  \end{nota}
 
 \begin{lemma} All operations of $\Sigma$ are nonexpanding with respect to $d_X^\cv$.
 \end{lemma}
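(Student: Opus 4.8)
The plan is to exploit the description of $d_X^\cv$ as the meet $d_X^\ast \wedge \bar d_X^\cv$ in the lattice of pseudometrics on $T_\Sigma|X|$ supplied by Construction~\ref{C:meet}, together with the observation that every operation of $T_\Sigma X$ is already nonexpanding with respect to \emph{each} of the two pseudometrics $d_X^\ast$ and $\bar d_X^\cv$ separately. Nullary operations require nothing, a map out of a one-point space being automatically nonexpanding, so only the unary operations $\sigma\in\Sigma_1$ need attention, and for these it suffices to establish
$$
d_X^\cv\big(\sigma(t),\sigma(t')\big)\le d_X^\cv(t,t')\qquad\text{for all }t,t'\in T_\Sigma|X|.
$$

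First I would check that $\sigma_{T_\Sigma X}$ does not increase $d_X^\ast$: directly from the recursive definition in Example~\ref{E:term}, the terms $\sigma(t)$ and $\sigma(t')$ are similar iff $t$ and $t'$ are, and in that case $d_X^\ast(\sigma(t),\sigma(t'))$ is the maximum over the single immediate subterm, hence equals $d_X^\ast(t,t')$, while otherwise both sides are $\infty$; so $\sigma_{T_\Sigma X}$ is in fact an isometry for $d_X^\ast$. Next I would check that $\sigma_{T_\Sigma X}$ does not increase $\bar d_X^\cv$: if $\cv\vdash t=_\var t'$, then for every algebra $A\in\cv$ and every interpretation $f\colon|X|\to A$ we have $f^{\#}(\sigma(t))=\sigma_A(f^{\#}(t))$ and $f^{\#}(\sigma(t'))=\sigma_A(f^{\#}(t'))$, so nonexpansiveness of $\sigma_A$ in the quantitative algebra $A$ yields $d_A(f^{\#}(\sigma(t)),f^{\#}(\sigma(t')))\le d_A(f^{\#}(t),f^{\#}(t'))\le\var$; thus $\cv\vdash\sigma(t)=_\var\sigma(t')$, and taking the infimum over all such $\var$ gives $\bar d_X^\cv(\sigma(t),\sigma(t'))\le\bar d_X^\cv(t,t')$.

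Combining the two, $\sigma_{T_\Sigma X}$ does not increase the pointwise minimum $d^0=\min\{d_X^\ast,\bar d_X^\cv\}$ either, since $\min$ is monotone in each argument. Finally I would pass to the meet itself: given any connecting sequence $t=s_0,s_1,\dots,s_n=t'$ figuring in the formula of Construction~\ref{C:meet}, applying $\sigma$ termwise produces a connecting sequence $\sigma(t)=\sigma(s_0),\dots,\sigma(s_n)=\sigma(t')$ with $\sum_{i<n} d^0(\sigma(s_i),\sigma(s_{i+1}))\le\sum_{i<n} d^0(s_i,s_{i+1})$; taking the infimum over all sequences from $t$ to $t'$, and using that $d_X^\cv(\sigma(t),\sigma(t'))$ is an infimum over the larger family of \emph{all} sequences from $\sigma(t)$ to $\sigma(t')$, delivers the desired inequality. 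I do not anticipate any real obstacle here; the only points needing a little care are unwinding the definition of $\bar d_X^\cv$ (in particular that $\cv\vdash t=_\var t'$ quantifies over all interpretations $|X|\to A$ for all $A\in\cv$), the trivial case $\bar d_X^\cv(t,t')=\infty$, and keeping track that passing to connecting sequences can only decrease distances.
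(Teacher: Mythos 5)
Your proof is correct and follows essentially the same route as the paper: both arguments reduce the claim to the nonexpansiveness of a unary $\sigma$ with respect to $d_X^\ast$ and with respect to $\bar d_X^\cv$ separately, and then pass to the meet $d_X^\cv = d_X^\ast \wedge \bar d_X^\cv$. The only (cosmetic) difference is in that last step: the paper applies the greatest-lower-bound property of the meet to the pulled-back pseudometric $d_1(t,t')=d_X^\cv\bigl(\sigma(t),\sigma(t')\bigr)$, whereas you unwind the chain formula of Construction~\ref{C:meet} explicitly; both are valid.
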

 
 \begin{proof}
We verify that for all terms $t, t' \in T_\Sigma |X|$ we have
 $$
 d_X^\cv (t, t') \geq d_X^\cv \big(\sigma(t), \sigma(t')\big)\,.
 $$
 Denote by $d_1$ the following pseudometric on $T_\Sigma |X|$:
 $$
 d_1 (t, t') = d_X^\cv \big(\sigma(t), \sigma(t')\big)\,.
 $$
 Since $\sigma$ is nonexpanding, we have  $d_1 \leq d_X^\ast$:
 $$
 d_1 (t, t') \leq d_X^\ast \big(\sigma(t), \sigma(t')\big) \leq d_X^\ast (t, t')\,.
 $$
 Further, we have $d_1 \leq \bar d_X^\cv$ because, whenever $t=_\var t'$ is satisfied in $\cv$, then  $\sigma(t)=_\var(t')$ is also satisfied. (This follows from the fact that $f^{\#}\colon TX \to A$ preserves the operations given by $t$ and $t'$, see Notation \ref{N:term}).
  Thus,
  $$
   d_1 (t, t') \leq \bar  d_X^\cv  \big(\sigma(t), \sigma(t')\big)\leq \varepsilon\,.
 $$
 Consequently,
 $
 d_1\leq d_X^\cv\,.
 $
This means precisely that $\sigma$ is nonexpanding.
  \end{proof}

 \begin{theorem}\label{T:f}
 Let $A$ be the algebra $T_\Sigma|X|$  of terms endowed with the pseudometric $d^\cv_X$. 
 The free quantitative algebra $T_\cv X$ on  $X$ is the metric reflection $q\colon A\to A/\!\sim$  with the universal map $q \cdot \eta_X$.
 \end{theorem}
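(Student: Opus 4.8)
The goal is to identify $T_\cv X$ with $A/\!\sim$, so it suffices to check that $A/\!\sim$ is an algebra of $\cv$ and that $q\cdot\eta_X\colon X\to A/\!\sim$ exhibits it as the free algebra on $X$. By the Lemma above, all operations of $A=(T_\Sigma|X|,d_X^\cv)$ are nonexpanding, so $A$ is an object of $\Sigma$-$\PMet$; hence Lemma~\ref{L:new} equips $A/\!\sim$ with a (unique) quantitative $\Sigma$-algebra structure making $q$ a homomorphism, and $A/\!\sim$ lies in $\Sigma$-$\Met$. Moreover $d_X^\cv\le d_X^\ast$ restricts on variables to $d_X$, so $\eta_X\colon X\to A$ and $q\cdot\eta_X\colon X\to A/\!\sim$ are nonexpanding.

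\emph{Step 1: $A/\!\sim\in\cv$.} Fix a defining equation $s=_\delta s'$ of $\cv$, with $s,s'\in T_\Sigma V$ for a finite set $V$ of variables, and an interpretation $g\colon V\to A/\!\sim$. Since $q$ is surjective, pick $\tilde g\colon V\to A$ with $q\tilde g=g$; then the term-algebra extensions satisfy $g^{\#}=q\cdot\tilde g^{\#}$, where $\tilde g^{\#}(s),\tilde g^{\#}(s')$ are the elements of $T_\Sigma|X|$ obtained by substituting $\tilde g$ into $s,s'$. For any $B\in\cv$ and interpretation $h\colon|X|\to B$ one has $h^{\#}\bigl(\tilde g^{\#}(s)\bigr)=(h^{\#}\tilde g)^{\#}(s)$ and likewise for $s'$, so the fact that $B$ satisfies $s=_\delta s'$ gives $d\bigl(h^{\#}(\tilde g^{\#}(s)),h^{\#}(\tilde g^{\#}(s'))\bigr)\le\delta$. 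Hence $\cv\vdash\tilde g^{\#}(s)=_\delta\tilde g^{\#}(s')$, so $\bar d_X^\cv\bigl(\tilde g^{\#}(s),\tilde g^{\#}(s')\bigr)\le\delta$ and therefore $d_X^\cv\bigl(\tilde g^{\#}(s),\tilde g^{\#}(s')\bigr)\le\delta$. As $q$ preserves distances, $d\bigl(g^{\#}(s),g^{\#}(s')\bigr)\le\delta$; since $g$ was arbitrary, $A/\!\sim$ satisfies every defining equation of $\cv$.

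\emph{Step 2: the universal property.} Let $B\in\cv$ and let $f\colon X\to B$ be nonexpanding. The usual term-algebra extension $f^{\#}\colon T_\Sigma|X|\to|B|$ is a $\Sigma$-homomorphism, and I claim it is nonexpanding as a map $A\to B$. It is nonexpanding for $d_X^\ast$ by Lemma~\ref{L:free}; and if $\bar d_X^\cv(t,t')\le\var$ then $\cv\vdash t=_\var t'$, so $d\bigl(f^{\#}(t),f^{\#}(t')\bigr)\le\var$ (as $B\in\cv$ and $f$ is an interpretation of $|X|$), whence $f^{\#}$ is also nonexpanding for $\bar d_X^\cv$. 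Consequently, for any chain $t=s_0,\dots,s_n=t'$ in $T_\Sigma|X|$ the triangle inequality in $B$ gives $d\bigl(f^{\#}(t),f^{\#}(t')\bigr)\le\sum_{i<n}\min\{d_X^\ast,\bar d_X^\cv\}(s_i,s_{i+1})$, and taking the infimum over all such chains (Construction~\ref{C:meet}) yields $d\bigl(f^{\#}(t),f^{\#}(t')\bigr)\le d_X^\cv(t,t')$. Thus $f^{\#}\colon A\to B$ is nonexpanding, and by Corollary~\ref{C:refl} it factors uniquely as $f^{\#}=\bar f\cdot q$ through a nonexpanding homomorphism $\bar f\colon A/\!\sim\to B$ satisfying $\bar f\cdot(q\cdot\eta_X)=f^{\#}\cdot\eta_X=f$. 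For uniqueness, if $g\colon A/\!\sim\to B$ is any nonexpanding homomorphism with $g\cdot q\cdot\eta_X=f$, then $g\cdot q$ is a $\Sigma$-homomorphism $T_\Sigma|X|\to B$ extending $f$, hence $g\cdot q=f^{\#}=\bar f\cdot q$, and surjectivity of $q$ forces $g=\bar f$.

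\emph{Main obstacle.} The individual verifications above are elementary; the one slightly delicate point is the last step of Step~2, where a bound by the meet $d_X^\cv$ has to be obtained indirectly, via the chain description of Construction~\ref{C:meet} and the triangle inequality in $B$, rather than from the two pointwise bounds directly. The real force of the hypothesis on arities has in fact already been spent in the Lemma above: it is precisely the unary (and nullary) case for which the operations of $T_\Sigma|X|$ remain nonexpanding for $d_X^\cv$, so that $A$ is genuinely an object of $\Sigma$-$\PMet$ and Lemma~\ref{L:new} applies.
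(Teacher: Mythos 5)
Your proposal is correct and follows essentially the same route as the paper: part (1) uses the same set-theoretic splitting of $q$ to pull an interpretation back to $T_\Sigma|X|$ and deduce $\cv\vdash \tilde g^{\#}(s)=_\delta \tilde g^{\#}(s')$, and part (2) bounds $d_B(f^{\#}(t),f^{\#}(t'))$ by both $d_X^\ast$ and $\bar d_X^\cv$ and then by their meet, the only cosmetic difference being that you inline the chain/triangle-inequality argument of Construction \ref{C:meet} where the paper simply invokes the universal property of the meet of pseudometrics. (One negligible imprecision: from $\bar d_X^\cv(t,t')\le\var$ one should pass to $\cv\vdash t=_{\var'}t'$ for all $\var'>\var$ and then let $\var'\to\var$, since the infimum need not be attained; this does not affect the conclusion.)
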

 
 \begin{proof}
 (1) The metric reflection $q \colon A \to A/\sim$ (where $t\sim t'$ means $d_X^\cv (t, t') =0$) yields an algebra in $\cv$. Indeed, let $s, s' \in T_\Sigma$ be terms such that $\cv$ satisfies $s=_\var s'$, then we verify
$$
d\big( f^{\#}(s),  f^{\#}(s') \big) \leq \var \quad \mbox{for each}\ \ f\colon V \to A/\sim \,.
$$
This is trivial for $X$ empty. Otherwise, choose a splitting of $q$ in $\Set$:
$$
i\colon |T_\cv X| \to T_\Sigma |X| \quad \mbox{with\quad} q\cdot i =\id\,.
$$
For the interpretation 
$$
g= i\cdot f \colon V\to T_\Sigma|X|
$$
we have, by Corollary \ref{C:refl}, a nonexpanding homomorphism $g^{\#}\colon T_\Sigma V \to (T_\Sigma|X|, d_X^\cv)$
with $g^{\#} \cdot \eta_v = i \cdot f$. The outward triangle of the following diagram
$$
\xymatrix@R=8mm@C=1.9cm{
& T_\Sigma V \ar[dddl]_{g^{\#}} \ar[dddr]^{f^{\#}}&\\
&V\ar[u]^>>>>>>>{\eta_V} \ar[d]_{f} &\\
& T_\cv X \ar[dl]_{i} \ar[dr]^{\id} &\\
 T_\Sigma|X| \ar[rr]_{q} && T_\cv X
 }
 $$
 commutes because it does when precomposed by $\eta_V$.
 Put $t= g^{\#}(s)$ and $t' =g^{\#}(s')$, then
 $$
 t=_\var t' \quad \mbox{holds in \ \ $\cv $}.
 $$
 Indeed, let $B$ be an algebra in $\cv$ and $h\colon |X| \to B$ an interpretation. Since $B$ satisfies $s=_\var s'$, the interpretation $k= h^{\#} \cdot g \colon V\to B$ fulfils
 $$
 d_B \big(k^{\#} (s), k^{\#}(s') \big) \leq \var\,.
 $$
 We have $k^{\#} = h^{\#} \cdot g^{\#}$ (both sides are nonexpanding homomorphisms which extend $h^{\#} \cdot g$). Thus from $t= g^{\#}(s)$ and $t'= g^{\#}(s')$ we get
 $$
 d_B \big(h^{\#}(t), h^{\#}(t')\big) \leq \var\,.
 $$ 
 From $\cv \vdash t=_\var t'$  we derive
 $$
 \bar d_X^\cv (t, t') \leq \var\,,
 $$
 thus,
 $$
 \bar d_X^\cv \big(g^{\#}(s), g^{\#}(s')\big) \leq \var\,.
 $$
 Since $q$ preserves distances and $q \cdot g^{\#} = f^{\#}$, this proves
$$
 d_X^\cv \big(f^{\#}(s), f^{\#}(s')\big) \leq \var\,,
$$
as desired.

(2) We verify the universal property of $q \cdot \eta_X \colon X \to A/\sim$. Given an algebra $B$ in $\cv$ and a nonexpanding map $f\colon  X\to B$, we present a nonexpanding homomorphism  $\bar f\colon T_\cv X\to B$ making the following square commutative:
$$
\xymatrix  @C=3pc@R=3pc{
X\ar[r]^f \ar[d]_{\eta_X} & B\\
T_\Sigma |X| \ar[r]_{q} & T_\cv X \ar[u]_{\overline f}
}
$$
For $f^{\#} \colon T_\Sigma \to A$ we define a pseudometric on 
$T_\Sigma |X|$ by
$$
d(t, t') = d_A \big( f^{\#}(t), f^{\#}(t')\big)\,.
$$
We verify that
$$
d\leq d_X^\cv\,.
$$
Since $f^{\#}$ is nonexpanding with respect to $d_X^\ast$ (Remark \ref{L:free}), we have 
$$
d\leq d_X^\ast\,.
$$
In order to prove $d\leq \bar d_X^\cv$, consider an equation
$$
\cv \vdash t=_\var t' \qquad (t, t' \in  T_\Sigma |X|).
$$
As $B\in \cv$ satisfies $t=_\var t'$, we have  $d(t, t') = d_B \big(f^{\#}(t), f^{\#}(t')\big)\leq \var$. 

Since $d\leq \bar d_X^\cv$, we see that
$$
f^{\#} \colon (T_\Sigma |X|, \bar d_X^\cv) \to B
$$
 is a nonexpanding homomorphism. By Corollary \ref{C:refl} there exists a unique homomorphism $\bar f$ making the following triangle commutative:
$$
\xymatrix  @C=2pc@R=3pc{
(T_\Sigma |X|, d_X^\cv)\ar[dr]_{f^{\#}} \ar[rr]^{q}&& T_\cv X\ar[dl]^{\bar f}\\
& B&
}
$$
This is the desired morphism:  
$$
f= f^{\#} \cdot \eta_X = \bar f\cdot (q \cdot \eta_X)\,.
$$
Since $q$ is surjective, the unicity of $f^{\#}$ follows from the universal property of $\eta_X$.
\end{proof}

\begin{remark}\label{R:f} 
%We denote by $T_\cv$ the monad of free algebras of a variety $\cv$ on $\Met$. 
(1) Given a homomorphism $h\colon A\to B$ and a surjective homomorphism $e\colon A\to A'$, then every nonexpanding  map $h'\colon A' \to B$ with $h=h'\cdot e$ is also a homomorphism. This follows from the fact that $e^n$ is surjective ($n\in \N$).

(2) The underlying  functor $T_\cv$ assigns to every metric space $X$ the quantitative algebra $T_\cv X = A/\sim$ of Theorem \ref{T:f}. To a nonexpanding map $f\colon X\to Y$ it assigns  the unique nonexpanding map $T_\cv f$ making the following square commutative
$$ \xymatrix@=3pc{
T_\Sigma |X| \ar[r]^{T_\Sigma f} \ar[d]_{q_X} & T_\Sigma |Y|\ar[d]^{q_Y}\\
T_\cv X \ar[r]_{T_\cv f} & T_\cv Y
}
$$
This square determines $T_\cv f$ uniquely: it is  a homomorphism because $q_Y\cdot T_\Sigma f$ is a homomorphism and $q_X$ is a surjective   homomorphism. Now apply the fact that $q_X$ is a reflection of $(T_\Sigma |X|, \bar d_X)$ in $\Sigma$-$\Met$ (Corollary \ref{C:refl}).

The unit $\eta_X^{T_\cv}$ of $T_\cv$ is given by $\eta_X^{T_\cv}= q_X \cdot \eta_X \colon X\to T_\cv X$, and the multiplication $\mu_X^{T_\cv}\colon T_\cv T_\cv X \to T_\cv X$ is the unique  nonexpanding homomorphism with  $\mu_X^{T_\cv}\cdot \eta_{T_\cv X}^{T_\cv} =\id$.
\end{remark}
\begin{coro}\label{C:f} For every space $X$ the map $T_\cv i_X$ is an isomorphism.
\end{coro}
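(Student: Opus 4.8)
The plan is to read $T_\cv i_X$ off the description of $T_\cv$ on morphisms and reduce the statement to comparing two pseudometrics on the set of terms. By Remark~\ref{R:f}(2), $T_\cv i_X$ is the unique map with $T_\cv i_X\cdot q_{|X|}=q_X\cdot T_\Sigma i_X$, where $q_{|X|}$ and $q_X$ are the metric reflections of $(T_\Sigma|X|,d_{|X|}^\cv)$ and $(T_\Sigma|X|,d_X^\cv)$. Since $i_X$ is carried by $\id_{|X|}$, the map $T_\Sigma i_X$ is carried by the identity (it merely relabels each variable $v$ by $i_X(v)=v$); identifying $T_\Sigma|X|$ with $T_\Sigma X$ accordingly, the equation becomes $T_\cv i_X\cdot q_{|X|}=q_X$. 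As $q_{|X|}$ and $q_X$ are surjective quotient maps onto the respective metric reflections, $T_\cv i_X$ is carried by the identity exactly when $d_{|X|}^\cv$ and $d_X^\cv$ have the same null pairs on $T_\Sigma|X|$.

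Next I would dispose of the easy half. Because $|X|$ is discrete, $d_{|X|}^*(t,t')=\infty$ whenever $t\ne t'$, so in the meet construction (Construction~\ref{C:meet}) the pointwise minimum of $d_{|X|}^*$ and $\bar d_{|X|}^\cv$ is just $\bar d_{|X|}^\cv$, which is already a pseudometric; hence $d_{|X|}^\cv=\bar d_{|X|}^\cv$. Moreover the relation ``$\cv\vdash t=_\var t'$'' refers only to the set of variables, not to the metric of $X$, so $\bar d_{|X|}^\cv=\bar d_X^\cv=:\bar d$ and the null pairs of $d_{|X|}^\cv$ are exactly $\{(t,t'):\cv\vdash t=t'\}$. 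Since $d_X^\cv=d_X^*\wedge\bar d\le\bar d$, every null pair of $d_{|X|}^\cv$ is a null pair of $d_X^\cv$.

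The heart of the matter is the reverse inclusion: if $d_X^\cv(t,t')=0$ then $\cv\vdash t=t'$. Here I would use that, for a unary signature, a term is $\langle w,v\rangle$ with $w$ a word of unary symbols and $v$ a variable or constant, that $\bar d(\langle w,v\rangle,\langle w,v'\rangle)$ equals the constant $\delta_w:=\sup_{B\in\cv}\operatorname{diam}(w_B[B])$ for distinct variables $v,v'$, and that $\delta_w=0$ iff $\cv\vdash w(x)=w(y)$. Given a chain $t=s_0,\dots,s_n=t'$ of small total $d^0$-cost ($d^0=\min\{d_X^*,\bar d\}$), I would split it into maximal runs of constant shape: inside a run of shape $w$ the $d_X^*$-branch costs are $d_X$-distances, so by the triangle inequality of $d_X$ the run costs at least $\min\{\delta_w,\,d_X(\text{entry variable},\text{exit variable})\}$, while every step between runs has cost a $\bar d$-distance (the shapes differ, so there $d_X^*=\infty$). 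Feeding these lower bounds into the triangle inequality of $\bar d$ along the run-endpoints should force $\bar d(t,t')=0$ in the limit $\var\to0$, i.e.\ $\cv\vdash t=t'$; together with the previous paragraph this shows that the two pseudometrics share their null pairs, so $|T_\cv|X||$ and $|T_\cv X|$ are literally the same quotient set and $T_\cv i_X$ is carried by the identity. (For $\CMet$ one completes where necessary; the argument is verbatim the same.)

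The step I expect to be the main obstacle is this last one, and within it the runs that genuinely move the variable across several shapes. The subtlety is that a ``$d_X^*$-move'' by a positive distance $\rho$ carries no quantitative consequence of bound $\le\rho$ — an interpretation may spread the two variables arbitrarily far apart — so the chain cost does not itself bound $\bar d(t,t')$. One therefore has to exploit that such moves are confined to a single shape, where the triangle inequality of $d_X$ pins them down, while transitions between shapes are pinned down by the triangle inequality of $\bar d$; combining the two rigidities is what forces a vanishing-cost chain to be, up to $\cv$-provable equalities, a constant chain.
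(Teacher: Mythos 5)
Your reduction is the right one, and the easy half is correct: since $|X|$ is discrete one has $d_{|X|}^\ast(t,t')=\infty$ for $t\ne t'$, hence $d_{|X|}^{\cv}=\bar d_{|X|}^{\cv}=\bar d_{X}^{\cv}$, and the literal statement amounts to showing that $d_X^{\cv}(t,t')=0$ implies $\bar d_X^{\cv}(t,t')=0$. (The paper itself offers no proof here: the corollary is presented as immediate from Theorem \ref{T:f} and Remark \ref{R:f}, which only yield that $T_\cv i_X$ is \emph{induced by the identity on representatives}, i.e.\ $T_\cv i_X\cdot q_{|X|}=q_X$; the claim that the underlying function is literally the identity is exactly the coincidence of null pairs that you isolate, and it does require an argument.) The genuine gap is in your treatment of that implication. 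Your run decomposition gives, for a run of shape $w$ from entry variable $y$ to exit variable $y'$, the cost lower bound $\min\{\delta_w,\,d_X(y,y')\}$, but the run's contribution to the $\bar d$-triangle inequality along run endpoints is $\bar d\bigl(w(y),w(y')\bigr)=\delta_w$ whenever $y\ne y'$. When the minimum is realized by $d_X(y,y')$ --- a cheap variable move inside a shape of large spread $\delta_w$ --- the run's cost says nothing about $\delta_w$, so summing the run contributions does not tend to $0$ with the chain cost. You flag exactly this difficulty, but ``combining the two rigidities'' is the whole proof and is not carried out; as written the argument does not establish the corollary.

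The statement is nevertheless true, and can be closed along your lines as follows. Put $\alpha(w,w')=\bar d\bigl(w(x),w'(x)\bigr)$ (same variable) and $\beta(w,w')=\bar d\bigl(w(x),w'(y)\bigr)$ (distinct variables), so $\delta_w=\beta(w,w)$ and $\beta(w,w')\ge\delta_w-\alpha(w,w')$. In a chain of cost $<\var$ every shape-changing step is a $\bar d$-step of cost at least $\alpha(w_i,w_{i+1})$, so the triangle inequality for $\alpha$ gives $\alpha(w_0,w_N)<\var$ for every $\var$, hence $\alpha(w_0,w_N)=0$, hence $\delta_{w_0}=\delta_{w_N}=:\delta$, and moreover every shape $w_i$ occurring in such a chain satisfies $\delta_{w_i}>\delta-2\var$. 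If $\delta=0$ you are done, since $\bar d(t,t')\le\alpha(w_0,w_N)+\delta_{w_N}=0$. If $\delta>0$, take $\var<\delta/8$: a variable-changing $\bar d$-step would cost at least $\delta_{w_i}-\alpha(w_i,w_{i+1})>\delta/2$, exceeding the total budget, and a variable-changing step inside a shape must pay its full $d_X$-distance (the alternative branch $\delta_{w_i}$ already exceeds the budget); so the total variable displacement is below $\var$ and the triangle inequality of $X$ forces the entry and exit variables of the whole chain to coincide, whence $\bar d(t,t')=\alpha(w_0,w_N)=0$. (Terms built on constants need separate but easier bookkeeping.) The point is that one does not bound each run's $\bar d$-contribution; one shows that for $\var$ small relative to $\delta$ a cheap chain cannot move the variable at all.
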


\begin{lemma}\label{L:set}
Let $s$, $ s'\in T_\Sigma V$ be terms over a set  $V\ne \emptyset$,
and let $v\colon V \to W$ be an injective map. Given a quantitative algebra $A$  satisfying the equation
$$
T_\Sigma v (s) =_\var T_\Sigma v(s')\,,
$$
 then $A$ satisfies $s=_\var s'$.
\end{lemma}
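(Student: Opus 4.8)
The plan is to exploit the fact that an injective map out of a nonempty set is a split monomorphism, and then simply chase the universal property of the term monad $T_\Sigma$. First, since $V\neq\emptyset$ we also have $W\neq\emptyset$, so the injection $v\colon V\to W$ has a retraction $r\colon W\to V$ in $\Set$, i.e. $r\cdot v=\id_V$; applying the functor $T_\Sigma$ gives $T_\Sigma r\cdot T_\Sigma v=\id_{T_\Sigma V}$. (Note that this step, and in fact the whole argument, uses nothing special about the arities being at most $1$.)

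Next I would fix a quantitative algebra $A$ satisfying $T_\Sigma v(s)=_\var T_\Sigma v(s')$ and an arbitrary interpretation $f\colon V\to|A|$, and aim to show $d_A\big(f^{\#}(s),f^{\#}(s')\big)\le\var$. The key move is to pull $f$ back along $r$: put $g=f\cdot r\colon W\to|A|$. Then I claim $g^{\#}\cdot T_\Sigma v=f^{\#}$ as homomorphisms $T_\Sigma V\to A$. Both sides are nonexpanding homomorphisms by Lemma \ref{L:free}, so by freeness it is enough to check that they agree after precomposition with $\eta_V$, and indeed $g^{\#}\cdot T_\Sigma v\cdot\eta_V = g^{\#}\cdot\eta_W\cdot v = g\cdot v = f\cdot r\cdot v = f = f^{\#}\cdot\eta_V$.

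From this identity we get $f^{\#}(s)=g^{\#}\big(T_\Sigma v(s)\big)$ and $f^{\#}(s')=g^{\#}\big(T_\Sigma v(s')\big)$. Applying the interpretation $g\colon W\to|A|$ to the equation $T_\Sigma v(s)=_\var T_\Sigma v(s')$, which $A$ is assumed to satisfy, yields $d_A\big(g^{\#}(T_\Sigma v(s)),\,g^{\#}(T_\Sigma v(s'))\big)\le\var$, that is, $d_A\big(f^{\#}(s),f^{\#}(s')\big)\le\var$. Since $f$ was arbitrary, $A$ satisfies $s=_\var s'$. The only genuinely load-bearing hypothesis is $V\neq\emptyset$, which is exactly what makes the retraction $r$ available; the rest is a routine application of the universal property of $T_\Sigma$ and its functoriality, so I do not expect any real obstacle beyond bookkeeping with the extension maps $(-)^{\#}$.
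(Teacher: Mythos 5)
Your proof is correct and takes essentially the same route as the paper: the paper likewise factorizes an arbitrary interpretation $f\colon V\to A$ as $f=g\cdot v$ (your retraction $r$ merely makes the choice of $g=f\cdot r$ explicit) and then uses the identity $g^{\#}\cdot T_\Sigma v=f^{\#}$, justified by freeness of $T_\Sigma V$, to transfer the inequality from $T_\Sigma v(s),T_\Sigma v(s')$ to $s,s'$. No gaps.
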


\begin{proof}
Every evaluation $f\colon V\to A$ has the form $f= g\cdot v$ for some map $g\colon W\to A$. Since $T_\Sigma v \colon T_\Sigma V \to T_\Sigma W$ is a homomorphism, the following triangle commutes 
$$
\xymatrix  @C=2pc@R=3pc{
T_\Sigma V\ar[dr]_{f^{\#}} \ar[rr]^{T_\Sigma v}&& T_\Sigma W\ar[dl]^{g^{\#}}\\
& A&
}
$$
As $A$ satisfies $T_\Sigma v(s) =_\var T_\Sigma v(s')$, we obtain
$$
d\big(f^{\#}(s), f^{\#}(s')\big) = T\big( g^{\#}\big(T_\Sigma (v(s))\big), g^{\#}\big(T_\Sigma v(s')\big) \leq \var\,. 
$$
\end{proof}

%%%%%%%%%%%%%%%%%%%%%%%%%%%%%%%%%%%%%%%

\begin{theorem} 
For every variety $\cv$ of unary quantitative algebras the monad $T_\cv$ is strongly finitary.
\end{theorem}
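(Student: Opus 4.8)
We check conditions a.--c.\ of Proposition \ref{P:comp} for $T=T_\cv$. Condition a.\ is Corollary \ref{C:new}, and condition b.\ is immediate from Corollary \ref{C:f} (an identity-carried map is surjective), so the whole task is condition c. Throughout we use that, $\Sigma$ having only unary and nullary symbols, every term in $T_\Sigma V$ has the form $w(a)$ with $a$ an atom (a variable or a constant) and $w$ a finite word of unary symbols, and that two terms are similar exactly when they are $w(a)$ and $w(a')$ for the \emph{same} word $w$ and similar atoms $a,a'$ --- i.e.\ the same constant, or two (possibly distinct) variables.

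First I would pin down $T_\cv$ on a discrete space $|X|$. The free-algebra metric $d_{|X|}^\ast$ on $T_\Sigma|X|$ is then discrete, while $\bar d_X^\cv$ satisfies the triangle inequality and depends only on the underlying set; hence the meet in Theorem \ref{T:f} collapses: $d_{|X|}^\cv=d_{|X|}^\ast\wedge\bar d_{|X|}^\cv=\bar d_X^\cv$, and $T_\cv|X|$ is the metric reflection $q_{|X|}$ of $(T_\Sigma|X|,\bar d_X^\cv)$. So a nonexpanding map $f\colon T_\cv|X|\to Y$ amounts, through the distance-preserving surjection $q_{|X|}$, to a nonexpanding map $\tilde f\colon(T_\Sigma|X|,\bar d_X^\cv)\to Y$ with $\tilde f=f\cdot q_{|X|}$. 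Since $\Delta_\var X$ is discrete, each element of $T_\cv\Delta_\var X$ is $q_{\Delta_\var X}(u)$ for a term $u\in T_\Sigma(\Delta_\var X)$, and $T_\Sigma l_\var$, $T_\Sigma r_\var$ carry $w\big((x,x')\big)$ to $w(x)$, $w(x')$; so the naturality of $q$ (Remark \ref{R:f}(2)) turns hypothesis \eqref{cond} into: for all $\var>0$, all words $w$, and all $(x,x')\in\Delta_\var X$,
$$
d_Y\big(\tilde f(w(x)),\tilde f(w(x'))\big)\le\var .
$$

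Next I would show $\tilde f$ is nonexpanding even for the smaller pseudometric $d_X^\cv=d_X^\ast\wedge\bar d_X^\cv$. By the chain description of the meet (Construction \ref{C:meet}) and the triangle inequality in $Y$, it is enough to prove, for each single pair $s,s'\in T_\Sigma|X|$, the two inequalities $d_Y(\tilde f s,\tilde f s')\le d_X^\ast(s,s')$ and $d_Y(\tilde f s,\tilde f s')\le\bar d_X^\cv(s,s')$; the second is exactly nonexpansiveness of $\tilde f$ for $\bar d_X^\cv$. For the first: if $s,s'$ are not similar then $d_X^\ast(s,s')=\infty$; if they are similar, unarity forces $s=w(z)$, $s'=w(z')$ with the same word $w$, so either $z=z'$ and $s=s'$ (both sides $0$), or $z,z'$ are distinct variables $x,x'$ with $d_X^\ast(s,s')=d_X(x,x')$, which is nonzero ($X$ being a metric, not merely pseudometric, space) and, if finite, realised by $\var:=d_X(x,x')$ with $(x,x')\in\Delta_\var X$, so the displayed inequality gives $d_Y(\tilde f(w(x)),\tilde f(w(x')))\le d_X(x,x')=d_X^\ast(s,s')$.

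Finally, being nonexpanding for $d_X^\cv$ makes $\tilde f$ constant on the classes of $\sim$ ($Y$ is metric), and since $q_X\colon(T_\Sigma|X|,d_X^\cv)\to T_\cv X$ is a distance-preserving surjection (Theorem \ref{T:f}), $\tilde f$ factors as $f'\cdot q_X$ with $f'\colon T_\cv X\to Y$ nonexpanding. From the naturality square $T_\cv i_X\cdot q_{|X|}=q_X\cdot T_\Sigma i_X=q_X$ (note $T_\Sigma i_X=\id$) and surjectivity of $q_{|X|}$ we get $f'\cdot T_\cv i_X=f$, the factorization demanded by c. The single place where the hypothesis of the section is really used is this reduction from similar terms to the pair of their differing variables: it works because in a unary signature similar terms differ in exactly one variable slot --- and it is precisely this that fails for operations of arity $\ge 2$.
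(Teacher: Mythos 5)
Your proof is correct and follows essentially the same route as the paper's: push $f$ back along $q_{|X|}$ to a map on terms, bound the induced distance by both $d_X^\ast$ (using unarity to reduce a pair of similar terms to a single variable substitution and then invoking condition \eqref{cond}) and by $\bar d_X^\cv$, conclude it lies below the meet $d_X^\cv$, and factor through the metric reflection $q_X$. The only (minor) streamlining is that you identify $T_\cv|X|$ directly with the reflection of $(T_\Sigma|X|,\bar d_X^\cv)$, which makes the $\bar d_X^\cv$ bound immediate, where the paper re-derives it via satisfaction of equations in the algebra $T_\cv|X|$.
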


\begin{proof}
We apply Proposition \ref{P:comp}. The monad $T_\cv$ is finitary (Lemma \ref{C:fin}), and $T_\cv i_X$ is epic (Corollary   \ref{C:f}). 
Thus, our task is to prove, for every nonexpansive map $f\colon T_\cv |X| \to Y$ satisfying
\begin{equation*}
d_Y (f\cdot T_\cv l_\var, f\cdot  T_\cv r_\var)\leq \var \quad \mbox{for every}\quad \var > 0\,,
\tag{$\ast$}
\end{equation*}
that $f$ factorizes through $T_\cv i_X$.
Throughout the proof, the metric reflection of $(T_\Sigma |X|, d_X^\cv)$ is denoted by
$$
q_X \colon (T_\Sigma |X|, d_X^\cv) \to T_\cv X\,.
$$

 We define  the following pseudometric $d$ on $T_\Sigma |X|$:
$$
d(t, t') = d_Y \big(f\cdot q_{|X|} (t), f\cdot q_{|X|} (t')\big)\,,
$$
for all terms $t$, $t'$.

(1) We first verify that
$$
d\leq\min \{ d_X^\ast, \bar  d_X^\cv\}
$$

(1a) Proof of $d\leq d_X^\ast$. If $t$, $t'$ are non-similar terms, then $d_X^\ast (t, t') =\infty$, and there is nothing to prove. Let $t$ and $t'$ be similar. The term $t$ contains a single variable $x$. 
  Then $t'$ is the term obtained from $t$ by substituting $x$ by $x'$. Put $d_X^\ast (t, t')=\var$, then we are to prove
$$
d(t, t') \leq \var\,.
$$
From the definition of $d_X^\ast$ it follows that $d_X (x, x') =\var$.
 Thus $(x, x') \in\Delta_\var X$. Let $s$ be the term 
in $T_\Sigma (\Delta_\var X)$
obtained from $t$ by substituting  $x$ by $(x, x')$. Then 
$$
t= T_\Sigma l_\var (s)\quad \mbox{and}\quad t' = T_\Sigma r_\var (s)\,.
$$
  Due to  \thetag{$\ast$} we conclude
   the desired inequality $d(t, t')\leq \var$.

(1b) Proof of $d\leq \bar d_X$. Our task is to verify that given an equation $t=_\var t'$ (for terms $t$, $t' \in T_\Sigma|X|$) holding in $\cv$, then $d(t,t') \leq \var$. Consider the algebra  $T_\cv |X|$ and the interpretation 
$$
h= q_{|X|} \cdot \eta_{|X|} \colon |X| \to T_\cv |X|\,.
$$
We know that $h^{\#} (t)$ and $h^{\#}(t')$ have distance at most $\var$. Moreover
$$
h^{\#} = q_{|X|} \colon T_\Sigma |X| \to T_\cv |X|
$$
because both sides are  nonexpanding homomorphism which extend $q_{|X|} \cdot \eta_{X}$. Thus
$$
d_{T_\cv X} \big( q_{|X|} (t), q_{|X|}(t')\big)\leq \var\,.
$$
Since $f\colon T_\cv {|X|} \to Y$ is nonexpanding, this yields the desired inequality:
$$
d(t, t') = d_Y \big(f \cdot q_{|X|} (t), f\cdot  q_{|X|}(t')\big)\leq \var\,.
 $$
 
 (2) As $d$ satisfies the triangle inequality, from \thetag{1} we get $d\leq d_X^\cv$. Hence, given $t$, $t'\in T_\Sigma |X|$ we have
 \begin{equation*}
 d_Y \big(f \cdot q_{|X|} (t), f\cdot  q_{|X|}(t')\big)\leq d_X^\cv (t, t')\,.
 \tag{$\ast$}
 \end{equation*}
 As $T_\cv i_X$ is an isomorphism (Corollary \ref{C:f}),
we can thus  define $g\colon T_\cv X\to Y$ in an element $x =T_\Sigma i_X \cdot  q_X (t)$ by
 $$ g(x) = f\cdot q_{|X|} (t)\,.
 $$
 This mapping is not only well-defined, it is nonexpanding. 
 
 In $\Met$ we have a commutative diagram as follows:
 $$
\xymatrix  @C=2pc@R=3pc{
T_\Sigma |X| \ar[r]^{q_{|X|}} \ar[d]_{T_\Sigma i_X} & T_v|X| \ar[r]^f \ar[d]_{T_v i_X} & Y\\
T_\Sigma X \ar[r]_{q_X} & T_vX \ar[ur]_g
}
$$
Indeed, the square clearly commutes and, since $T_\Sigma i_X$ is carried by identity, the outward shape commutes, too (by definition of $g$). Thus, the right-hand triangle commutes, because it does when precomposed by $q_{|X|}$. Therefore $g$ is the desired factorization of $f$.%\hfill \qed
 \end{proof}

 \section{A Counter-Example} 
 
  In the present section we prove that the free-algebra monad $T_\cv$ for the variety of two $\var$-close binary operations  is not strongly finitary.

\begin{assump} Throughout this section $\Sigma =\{\sigma_1, \sigma_2\}$ with $\sigma_1$, $\sigma_2$ binary.  For a fixed number $\var$ with $0<\var<1$ we denote by $\cv$ the variety presented by the  quantitative equation
  $$
  \sigma_1 (x,y) =_\var \sigma_2(x,y)\,.
   $$
  \end{assump}
  
 \begin{remark} 
 In universal algebra the free algebra $T_\Sigma V$ on a set $V$ of variables can be represented as follows. The elements are 
 all finite, ordered binary trees with leaves labelled in $V$, and inner nodes labelled by $\sigma_1$ or $\sigma_2$.
Here 
 trees with a label-preserving isomorphism between them are identified.
 The operation $\sigma_i$ is tree-tupling with the root labelled by $\sigma_i$.
  That is, the  variable $x\in V$ is represented by the root-only tree labelled by $x$.  
The term $\sigma_i(t_l, t_r)$ is represented by the tree below 
$$
\xymatrix  @C=4mm@R=1cm{
&\sigma_i \ar@{-}[dl] \ar@{-}[dr]&\\
t_l & & t_r
}
$$
\end{remark}

\begin{nota}\label{N:max} 
For every metric  space $X$ we define  the following metric
$$
\widehat d_X
$$
on the set $T_\Sigma|X|$ of all terms. For all variables $x$ and $y$ we use their distance in $X$
$$
\widehat d_X (x,y) = d(x,y)\,,
$$
and we put
$$
\widehat d_X (x,t) = \infty \quad \mbox{if}\quad t\notin |X|\,.
$$
All other distances $\widehat d_X (t,t')$ are defined by recursion: Represent $t$ and $t'$ as the following trees
\begin{equation*}
t=
\xymatrix  @C=4mm@R=1cm{
&\sigma_i \ar@{-}[dl] \ar@{-}[dr]&\\
t_l & & t_r
}
\quad \quad
t'=
\xymatrix  @C=4mm@R=1cm{
&\sigma_j \ar@{-}[dl] \ar@{-}[dr]&\\
t'_l & & t'_r
}\tag{*}
\end{equation*}

\noindent 
Let $m$ denote  the maximum of the distances $\widehat d_X(t_l, t_l')$ and $\widehat d_X(t_r, t_r')$. Put
$$
\widehat d_X (t, t') =\begin{cases}
m & \mbox{if}\ \ i=j\\ \var +m &\mbox{else.}
\end{cases}
$$

The \emph{depth} $\Delta (t)$ of a tree is defined by recursion: $d(x)=0$ for variables $x$, and 
$$
\Delta \big(\sigma_i(t_l, t_r)\big) = \max \big\{ \Delta (t_l), \Delta(t_r)\big\} +1\,.
$$
\end{nota}

\begin{lemma}
The free algebra $T_\cv X$  on a metric space $X$ is the space
$$
\big( T_\Sigma |X|, \widehat d_X\big)
$$
with operations  given by tree-tupling. The universal map is the inclusion morphism $X\hookrightarrow  T_\Sigma |X|$.
\end{lemma}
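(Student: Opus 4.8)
The plan is to verify directly the ingredients the statement packs together: that $\widehat d_X$ is an (extended) metric, that tree-tupling furnishes nonexpanding operations so that $(T_\Sigma|X|,\widehat d_X)$ is a quantitative $\Sigma$-algebra lying in $\cv$, that the inclusion $X\hookrightarrow T_\Sigma|X|$ is nonexpanding, and that this data has the universal property of the free algebra. First I would extract a closed formula for $\widehat d_X$, proved by a routine induction on tree depth against the recursion of Notation \ref{N:max}: two terms $t,t'$ have $\widehat d_X(t,t')<\infty$ exactly when they share the same underlying binary tree shape (forgetting the inner-node labels $\sigma_1,\sigma_2$), and then, writing $\lambda_t(p)$ for the variable at leaf $p$ of $t$ and $k_p(t,t')$ for the number of inner nodes on the root-to-$p$ path carrying different labels in $t$ and in $t'$,
\[
\widehat d_X(t,t')\;=\;\max_{p\ \text{a leaf}}\Big(k_p(t,t')\cdot\var+d_X\big(\lambda_t(p),\lambda_{t'}(p)\big)\Big).
\]
From this formula, symmetry and $\widehat d_X(t,t)=0$ are immediate; $\widehat d_X(t,t')=0$ forces equal shape with $k_p\equiv 0$ and $d_X(\lambda_t(p),\lambda_{t'}(p))=0$ at every leaf, hence $t=t'$; and the triangle inequality reduces, after discarding the trivial infinite cases, to the subadditivity of each leaf-quantity $k_p\cdot\var+(\text{leaf distance})$ — using that with only two labels available a label mismatch between $t$ and $t''$ at a node forces a mismatch in at least one of the two intermediate steps — together with the fact that a pointwise maximum of subadditive quantities is subadditive. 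So $\widehat d_X$ is an extended metric.

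Next, nonexpansiveness of the operations is immediate from the $i=j$ clause of the recursion, which reads $\widehat d_X(\sigma_i(t_l,t_r),\sigma_i(t'_l,t'_r))=\max\{\widehat d_X(t_l,t'_l),\widehat d_X(t_r,t'_r)\}$; this is exactly nonexpansiveness with respect to the maximum metric on the square. The algebra $(T_\Sigma|X|,\widehat d_X)$ lies in $\cv$ because the $i\ne j$ clause gives $\widehat d_X(\sigma_1(t,t'),\sigma_2(t,t'))=\var+\max\{\widehat d_X(t,t),\widehat d_X(t',t')\}=\var$ for all $t,t'$, so the equation $\sigma_1(x,y)=_\var\sigma_2(x,y)$ holds; and the inclusion $X\hookrightarrow T_\Sigma|X|$ is, by the base case of the definition of $\widehat d_X$, an isometry onto the subspace of variables, in particular nonexpanding.

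It remains to establish the universal property. Given a quantitative algebra $A\in\cv$ and a nonexpanding map $f\colon X\to A$, let $f^{\#}\colon T_\Sigma|X|\to A$ be the $\Sigma$-homomorphism extending $f$; it exists by classical universal algebra, and it is the unique homomorphic extension of $f$ since $X$ generates $T_\Sigma|X|$. The only thing to check is that $f^{\#}$ is nonexpanding for $\widehat d_X$, i.e.\ $d_A(f^{\#}(t),f^{\#}(t'))\le\widehat d_X(t,t')$, which I would prove by induction on $\Delta(t)+\Delta(t')$. When $t,t'$ are variables this is nonexpansiveness of $f$; when exactly one of them is a variable, $\widehat d_X(t,t')=\infty$ and there is nothing to prove; otherwise $t=\sigma_i(t_l,t_r)$ and $t'=\sigma_j(t'_l,t'_r)$. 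If $i=j$, nonexpansiveness of $\sigma_{i,A}$ and the induction hypothesis give $d_A(f^{\#}(t),f^{\#}(t'))\le\max\{d_A(f^{\#}(t_l),f^{\#}(t'_l)),d_A(f^{\#}(t_r),f^{\#}(t'_r))\}\le\max\{\widehat d_X(t_l,t'_l),\widehat d_X(t_r,t'_r)\}=\widehat d_X(t,t')$. If $i\ne j$, say $i=1$ and $j=2$, insert $w=\sigma_{1,A}\big(f^{\#}(t'_l),f^{\#}(t'_r)\big)$: the induction hypothesis bounds $d_A(f^{\#}(t),w)$ by $m:=\max\{\widehat d_X(t_l,t'_l),\widehat d_X(t_r,t'_r)\}$, while $d_A\big(w,f^{\#}(t')\big)=d_A\big(\sigma_{1,A}(a,b),\sigma_{2,A}(a,b)\big)\le\var$ since $A$ satisfies the defining equation (with $a=f^{\#}(t'_l)$, $b=f^{\#}(t'_r)$), so by the triangle inequality in $A$ we get $d_A(f^{\#}(t),f^{\#}(t'))\le m+\var=\widehat d_X(t,t')$. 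This finishes the induction, so $f^{\#}$ is the required nonexpanding homomorphism $T_\cv X\to A$.

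The step I expect to be the main obstacle is the first one — pinning down the closed formula for $\widehat d_X$ and, with it, the triangle inequality — since one must be careful about how the additive $\var$-contributions coming from label mismatches accumulate along root-to-leaf paths and interact with the maxima taken over leaves. Once that is in hand, the algebra axioms, membership in $\cv$, and the universal property are all short structural inductions of a standard shape.
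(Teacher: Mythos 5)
Your proof is correct, and its core --- the induction showing that the homomorphic extension $f^{\#}$ is nonexpanding, with the $i\ne j$ case handled by inserting the intermediate element $\sigma_{1,A}\bigl(f^{\#}(t'_l),f^{\#}(t'_r)\bigr)$ and invoking the equation $\sigma_1(x,y)=_\var\sigma_2(x,y)$ in $A$ --- is exactly the paper's argument (the paper phrases the intermediate element as $f^{\#}(t'')$ for the relabelled tree $t''$, which is the same thing). Where you genuinely diverge is in the first step: the paper verifies the metric axioms by a direct induction on depth and disposes of the triangle inequality in one sentence, whereas you first derive the closed formula $\widehat d_X(t,t')=\max_p\bigl(k_p\var+d_X(\lambda_t(p),\lambda_{t'}(p))\bigr)$ over leaves $p$ and then read off symmetry, separation, and subadditivity leafwise. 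That formula is correct (a routine induction against the recursion confirms it), and it buys a transparent proof of the triangle inequality --- the point you rightly identify as the only delicate one; the cost is the extra bookkeeping of mismatch counts along root-to-leaf paths. Two small remarks: since the paper's metric spaces are extended, ``$\widehat d_X(t,t')<\infty$ exactly when the shapes agree'' is not quite right (equal shapes with leaves at infinite distance still give $\infty$); the correct statement is that distinct shapes force $\infty$ and equal shapes give your formula, which may still evaluate to $\infty$ --- this does not affect anything downstream. You also explicitly check that $(T_\Sigma|X|,\widehat d_X)$ satisfies the defining equation and hence lies in $\cv$, a step the paper leaves implicit; including it is a point in your favour.
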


\begin{proof} (1) The map  $\widehat d_X$ is symmetric, and it satisfies $\widehat d_X (t, t')=0$ iff $t=t'$. This is easy to prove by induction
on the maximum of the depths of $t$ and $t'$.
 The triangle inequality
$$
\widehat d(t, t')  + \widehat d(t', t^{\prime\prime}) \geq d(t', t^{\prime\prime})
$$
also follows by induction, using  the fact that, whenever $d(t, t') \ne \infty$, then the terms $t$ and $t'$ are similar (Example \ref{E:term}).

%%%
(2) The operation $\sigma_1$ is nonexpanding:  given $\delta$ such that
$$
\widehat d_X (t_l, t_l') \leq \delta \quad \mbox{and}\quad  \widehat d_X (t_r, t_r') \leq \delta\,,
$$
we verify $\widehat d\big(\sigma_1(t_l, t_r), \sigma_1 (t_l', t_r')\big) \leq \delta$. Indeed, for the trees $t$ and $t'$  in \thetag{*}  with  $i=1=j$ we have $d(t, t') =m \leq \delta$.

Analogously for $\sigma_2$.

(3) $\eta \colon  X \to T_\cv X$ is nonexpanding: $\widehat d_X  (x, y) = d(x,y)$ for all $x, y\in X$.

(4) Let $A$ be an algebra in $\cv$. Given a nonexpanding morphism $f\colon X \to A$, there is a  unique homomorphism of the underlying  $\Sigma$-algebras $f^{\#} \colon T_\Sigma |X| \to |A|$ extending $f$. It  is our task to verify that $f^{\#}$ is nonexpanding:
$$
\widehat d_X (t, t') \geq d\big(f^{\#} (t), f^{\#} (t')\big) \quad \mbox{for}\ \ t, t'\in T_\Sigma |X|\,.
$$
This is clear if $t=x$ is a variable: either the  left-hand side is $\infty$, or $t'=y$ is also a variable, then, since $f$ is nonexpanding, we get
$$
\widehat d_X (x,y) \geq d\big(f(x), f(y)\big) = d\big( f^{\#} (x), f^{\#} (y)\big)\,.
$$
Now consider $t= \sigma_i (t_l, t_r)$ and $t'=\sigma_j(t_l', t_r')$. The proof of our inequality is  by induction on the maximum, $k$, of the depths of the trees $t$ and $t'$. The case $k=0$ has  just  been discussed.

In the induction step we use that  the operation $\sigma_i^A$ on $A$ is nonexpanding for $i=1,2$. By induction hypothesis we have 
$$
\widehat d_X  (t_l, t_l') \geq d\big( f^{\#}(t_l), f^{\#} (t_l')\big)\,;
$$
analogously for $t_r$, $t_r'$. For the  maximum $m$ (Notation \ref{N:max})  we thus get
$$
d\big( f^{\#}(t_l), f^{\#} (t_l')\big)\leq m \quad \mbox{and}\quad d\big( f^{\#}(t_r), f^{\#} (t_r')\big)\leq m\,.
$$
As the operation $\sigma_i^A$   is nonexpanding, this implies
$$
d\big( \sigma_i^A \big(f^{\#}(t_l), f^{\#} (t_r)\big), \sigma_i^A 
\big( f^{\#}(t_l'), f^{\#} (t_r')\big)\big) \leq m\,.
$$
Now  $f^{\#}(t)= \sigma_i^A \big(f^{\#}(t_l), f^{\#}(t_r)\big)$ , analogously for $f^{\#}(t')$. Thus
$$
m\geq d\big(f^{\#}(t), f^{\#}(t')\big)\,.
$$
This is the desired inequality in case $i=j$: we have  $\widehat d_X(t, t') =m$.

Let $i\ne j$. We use the following tree
$$ 
t^{\prime\prime}=
\xymatrix@C=3mm@R=12mm{
&   {\sigma_i} \ar@{-} [dl] \ar@{-}[dr]&\\
t'_l && t'_r}
$$
Since it differs from $t'$ only in the root labels, and $A$ satisfies $\sigma_1(x,y) =_{\var} \sigma_2(x,y)$, we have
$$
d\big(f^{\#}(t^{\prime\prime}), f^{\#}(t^\prime)\big)\leq \var\,.
$$
By the above case we know that
$$
m=\widehat d_X (t, t^{\prime\prime}) \geq d\big(f^{\#}(t) f^{\#}(t^{\prime\prime})\big)\,.
$$
Therefore
\begin{align*}
d(t, t') &= m+\var\\
&\geq d\big(f^{\#}(t), f^{\#}(t^{\prime\prime})\big) + d\big( f^{\#}(t^{\prime\prime}), f^{\#}(t^\prime)\big)\\
&\geq  d\big(f^{\#}(t), f^{\#}(t^{\prime})\big)\,.
\end{align*}
\end{proof}

\begin{coro}
The monad $T_\cv$ is given by $X\mapsto (T_\Sigma |X|, \widehat d_X)$. It takes a morphism $f\colon X\to Y$ to the morphism $T_\cv f$ assigning to a tree $t$ in $T_\Sigma |X|$ the tree  in $ T_\Sigma |Y|$ obtained by relabelling all the leaves from $x$ to $f(x)$.
\end{coro}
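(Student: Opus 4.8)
The plan is to extract both the object part and the morphism part of $T_\cv$ directly from the Lemma just proved, which identifies the free algebra $F_\cv X$ with the space $(T_\Sigma|X|,\widehat d_X)$ (the operations being tree-tupling) and its universal arrow $\eta_X$ with the leaf-inclusion $X\hookrightarrow T_\Sigma|X|$. Since $T_\cv=U_\cv F_\cv$, the value of the underlying functor of $T_\cv$ on an object $X$ is then exactly $(T_\Sigma|X|,\widehat d_X)$, so the only thing left to determine is the action on a morphism $f\colon X\to Y$.

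By definition of the adjunction $F_\cv\dashv U_\cv$, the arrow $F_\cv f$ is the unique $\cv$-homomorphism satisfying $F_\cv f\cdot\eta_X=\eta_Y\cdot f$; equivalently, under the Lemma's identifications, $T_\cv f$ is the unique nonexpanding $\Sigma$-homomorphism $(T_\Sigma|X|,\widehat d_X)\to(T_\Sigma|Y|,\widehat d_Y)$ whose value on each leaf $x\in|X|$ is $f(x)$. Such a homomorphism exists and is nonexpanding by item (4) in the proof of the Lemma, applied to the algebra $A=T_\cv Y$ together with the nonexpanding map $\eta_Y\cdot f\colon X\to T_\cv Y$. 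It then remains only to recognise it concretely: in universal algebra the unique $\Sigma$-homomorphism $T_\Sigma|X|\to T_\Sigma|Y|$ extending a map $g\colon|X|\to|Y|$ on generators is the relabelling map $T_\Sigma g$, which preserves the tree shape and all inner-node labels $\sigma_1,\sigma_2$ and replaces each leaf label $x$ by $g(x)$ (an immediate induction on the depth of the tree, using preservation of $\sigma_i$-tupling). Taking $g=|f|$ identifies $T_\cv f$ with precisely the leaf-relabelling described in the statement.

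I do not expect a genuine obstacle here; the whole content is carried by the Lemma, and the Corollary is essentially a translation of it into functorial language. The one point deserving an explicit line is that the relabelling map is indeed a morphism of $\Met$, i.e.\ nonexpanding: this follows by induction on $\max\{\Delta(t),\Delta(t')\}$ from the recursive clauses defining $\widehat d$, since relabelling does not affect the inner-node labels, so the two compared trees retain the same root labels at every stage and the ``$+\var$'' alternatives line up, while at the leaves nonexpansiveness is exactly that of $f$. Hence $\widehat d_X(t,t')\geq\widehat d_Y\big((T_\Sigma f)(t),(T_\Sigma f)(t')\big)$, which is all that is needed.
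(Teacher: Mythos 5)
Your proposal is correct and follows exactly the route the paper intends: the paper states this corollary without proof because it is the standard passage from the description of free algebras (the preceding Lemma) to the free-algebra monad $U_\cv F_\cv$, with the morphism part pinned down by the universal property of $\eta_X$ and the classical fact that the homomorphic extension of a map on generators of a term algebra is the relabelling map. Your extra inductive check that relabelling is nonexpanding with respect to $\widehat d$ is redundant (item (4) of the Lemma already gives it, as you note) but harmless.
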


\begin{prop} \label{P:contra} The functor $T_\cv$ is not strongly finitary.
\end{prop}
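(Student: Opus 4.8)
The plan is to apply Proposition \ref{P:comp} and to show that its clause c.\ fails for $T_\cv$. Clauses a.\ and b.\ come for free: $T_\cv$ is finitary by Corollary \ref{C:new}, and by the description of $T_\cv$ given just above (action on morphisms by relabelling of leaves) the morphism $T_\cv i_X$ is carried by the identity of $T_\Sigma|X|$, hence surjective. So what must be produced is a metric space $X$, a metric space $Y$, and a nonexpanding map $f\colon T_\cv|X|\to Y$ satisfying the condition \eqref{cond} of Proposition \ref{P:comp} but admitting no nonexpanding factorization through $T_\cv i_X$; since $T_\cv i_X$ is identity-carried, this last requirement says precisely that $f$, regarded as a map out of the underlying set $T_\Sigma|X|$, is not nonexpanding with respect to $\widehat d_X$.

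The idea behind the construction is that $\widehat d_X$ combines a leaf-mismatch (at a leaf $\ell$) and an operation-mismatch (at an internal node $\nu$) by a \emph{maximum} as soon as $\nu$ is off the root-to-$\ell$ path, since the penalty $\var$ only accumulates \emph{along} a path; whereas a map $f$ that respects \eqref{cond} together with nonexpansiveness for the discrete-leaf metric $\widehat d_{|X|}$ is forced to treat such a leaf-move and operation-move as \emph{independent} — hence additive — contributions. I will therefore take $X=\{a,a'\}$ with $d_X(a,a')=1$ (so $|X|$ is the two-point discrete space) and the pair of trees
$$ t=\sigma_1\big(a,\sigma_1(a,a)\big),\qquad t'=\sigma_1\big(a',\sigma_2(a,a)\big). $$
These have the same root label $\sigma_1$, so the operation mismatch sits only at the inner node, which is off the path to the left leaf; the recursion of Notation \ref{N:max} then yields $\widehat d_X\big(\sigma_1(a,a),\sigma_2(a,a)\big)=\var$ and $\widehat d_X(t,t')=\max\{1,\var\}=1$ (using $\var<1$). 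For $Y$ I will use the finite subspace $\{0,\var,1,1+\var\}$ of $\R$, and define $f$ on the (finite) set of trees of shape $\sigma_{j_0}(v_1,\sigma_{j_1}(v_2,v_3))$ with $v_i\in\{a,a'\}$, $j_i\in\{1,2\}$ by $f(s)=\bar v_1+\var\cdot[\,j_1=2\,]$, where $\bar a=0$, $\bar{a'}=1$, and $f(s)=0$ on every other tree — such trees being at $\widehat d_{|X|}$-distance $\infty$ from everything relevant, so their $f$-values play no role.

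The verification then splits into three short checks. (i) $f$ is nonexpanding for $\widehat d_{|X|}$: non-similar trees, and similar trees with a differing leaf, are at distance $\infty$; for two similar trees with the same leaves the recursion gives $\widehat d_{|X|}(s,s')=\var\big([\,j_0\ne j_0'\,]+[\,j_1\ne j_1'\,]\big)\ge\var[\,j_1\ne j_1'\,]=|f(s)-f(s')|$. (ii) $f$ satisfies \eqref{cond}: applying $T_\cv l_\delta$ and $T_\cv r_\delta$ to an element of $T_\cv\Delta_\delta X$ keeps operation labels fixed and only changes leaves, so the summand $\var\cdot[\,j_1=2\,]$ cancels and the two $f$-values differ by at most the relevant leaf distance, which is $\le\delta$. (iii) $|f(t)-f(t')|=|0-(1+\var)|=1+\var>1=\widehat d_X(t,t')$, so $f$ is not nonexpanding for $\widehat d_X$ and hence does not factor through $T_\cv i_X$. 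Thus clause c.\ of Proposition \ref{P:comp} fails and $T_\cv$ is not strongly finitary.

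The one genuinely delicate point is designing the triple $(X;t,t';f)$: the large leaf-mismatch must be placed in a subtree disjoint from the operation-mismatch, so that $\widehat d_X(t,t')$ is a maximum rather than a sum, while at the same time $f$ must read off \emph{both} quantities additively and yet remain nonexpanding for $\widehat d_{|X|}$ and satisfy \eqref{cond}. Once such an example is fixed, every step above is immediate from the recursive formula for $\widehat d_X$.
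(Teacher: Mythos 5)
Your proposal is correct, and it follows the same overall strategy as the paper: fix the two-point space $X$ of diameter $1$, exhibit a pair of trees in which a leaf-mismatch of size $1$ and an operation-mismatch of size $\var$ sit in \emph{disjoint} subtrees (so that $\widehat d_X$ records only their maximum, namely $1$), and then produce a map $f$ out of $T_\cv|X|$ that is forced by \eqref{cond} and by nonexpansiveness for the discrete-leaf metric to control the two mismatches only \emph{independently}, so that it may legitimately separate $t$ from $t'$ by the sum $1+\var$. Where you genuinely depart from the paper is in the choice of the codomain and of $f$: the paper takes $Y$ to be the set of trees itself equipped with the meet metric $d_X^\ast\wedge\widehat d_{|X|}$ and lets $f$ be the identity, which means the decisive lower bound $d(t,t')=\var+1$ has to be extracted from the infimum over chains in Construction \ref{C:meet} (the one delicate step of the paper's proof, argued by locating the last chain element whose left-most leaf is still labelled $a$). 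You instead use a four-point subspace of $\R$ and an explicit ``potential function'' $f(s)=\bar v_1+\var\cdot[\,j_1=2\,]$, reducing the three verifications — nonexpansiveness for $\widehat d_{|X|}$, condition \eqref{cond}, and the failure of nonexpansiveness for $\widehat d_X$ — to direct arithmetic from the recursion in Notation \ref{N:max}, with no infimum argument at all. The one point you dispatch a little quickly, but correctly, is that trees not of your distinguished shape are at $\widehat d_{|X|}$-distance $\infty$ from all trees of that shape (finite $\widehat d$-distance forces equal unlabelled tree shapes), so setting $f=0$ there is harmless; granting that, your argument is complete and, in my view, somewhat more elementary than the paper's.
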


\begin{proof}
%We apply Proposition \ref{P:comp}.

%(1) Let us recall that all metrics on a given set form a poset with the pointwise order. The meet
%$$
%d= d'\wedge d^{\prime\prime}
%$$
%of metrics $d'$, $d^{\prime\prime}$ is obtained from their pointwise minimum (assigning to $x$, $y$ the value $d_0=\min\{d'(x,y), d^{\prime\prime}(x,y)\}$ by the establishing the triangle inequality. More detailed: for all $x, y \in V$  define
%$$
%d(x,y) = \inf \sum_{i=1}^{n-1} d_0(s_i, s_{i+1})
%$$
%where the infimum ranges over all sequencesin $V$ as follows
%$$
%t=s_0, s_1, \dots , s_n = t'\,.
%$$
 We are going to present spaces $X$ and $Y$ and a nonexpanding map $f\colon T_\cv |X| \to Y$ satisfying \eqref{cond} of Proposition \ref{P:comp}, which does not factorize through $T_\cv i_X$. This proves our proposition. Let $X$ be the following space 
$$
X=\{a, b\}\,, \quad d(a,b)=1\,.
$$
Thus $T_\cv |X|$ is the space of all terms on $\{a,b\}$ with the metric $\widehat d_{|X|}$. The space $Y$ is the same  set  of terms with the meet of the
metrics $d_X^\ast$ (Example \ref{E:term}) and $\widehat d_{|X|}$ (Notation \ref{N:max}):
$$
Y=\big(T_\Sigma |X|, d\big)\,, \mbox{\ where\ } d= d^\ast_X \wedge \widehat d_{|X|}\,.
$$
The identity-carried map $f\colon T_\cv |X| \to Y$ is clearly nonexpanding. To verify \eqref{cond}, consider
a number $\delta >0$ and 
an arbitrary tree $u\in T_\cv (\Delta_\delta X)$. This  is a binary  tree with $k$ leaves labelled (from left to right)  by pairs $(x_i, x_i')$, $i<k$, with  $d_X(x_i, x_i')\leq 
\delta$. The tree $t=T_\cv l_\delta(u)$ is the same one, except that the $i$-th leaf label is $x_i$; analogously 
$t'= T_\cv r_\delta (u)$. The definition of $d^\ast_X$ yields
$$
d^\ast_X \big(T_\cv l_\delta (u), T_\cv r_\delta(u)\big) = d^\ast_X (t, t') =\max\limits_{i<k} d_X (x_i, x_i') \leq \delta\,.
$$
The condition \eqref{cond} states precisely this inequality, since $f$ is carried by identity.

We now prove that $f$ does not factorize through  $T_\cv i_X$. Since both $f$ and $T_\cv i_X$ are identity-carried,
 this means that $\widehat d_X (t, t') < d(t, t')$  for some trees. Indeed,  for the following trees
\begin{center}
$
t=
\xymatrix@C=3mm@R=12mm{
&  &  {\sigma_1} \ar@{-} [dl] \ar@{-} [dr]&&\\
& \sigma_2\ar@{-} [dl] \ar@{-} [dr] & & \sigma_1 \ar@{-} [dl] \ar@{-}[dr] &\\
a   & &a\ \ b&&b
}
$
\qquad 
$
t'=
\xymatrix@C=3mm@R=12mm{
&  &  {\sigma_1} \ar@{-} [dl] \ar@{-} [dr]&&\\
& \sigma_2\ar@{-} [dl] \ar@{-} [dr] & & \sigma_2 \ar@{-} [dl] \ar@{-}[dr] &\\
b   & &b\ \ b&&b
}
$
\end{center}
\noindent 
we verify that $\widehat d_X (t, t')=1$ and $d (t, t') =\var +1$.

The first equality follows from
$$
\widehat d_X (t_l, t_l')=1 \quad \mbox{and} \quad \widehat d_X (t_r, t_r') =\var <1\,,
$$
since we use the maximum for $\widehat d_X (t,t')$.

For the second equality observe first that
$$
\widehat d_{|X|} (t, t') =\infty = d^\ast_X (t, t')\,.
$$
(Since $d_{|X|}(a,b)=\infty$, we get $\widehat d_{|X|}(t, t')=\infty$. Since $t$ is not similar to $t'$, we have $d_X^\ast (t, t') =\infty$.)

%%%%%%%%%%%%%%%%%%%%%%%
The infimum defining $d= d_X^\ast \wedge \widehat d_{|X|}$ thus uses a sequence $t= s_0, s_1, \dots, s_n=t'$ of length $n>1$ (Construction \ref{C:meet}). One such sequence is $s_0, s_1, s_2$ where we put
$$
s_1=
\xymatrix@C=3mm@R=12mm{
&  &  {\sigma_1} \ar@{-} [dl] \ar@{-} [dr]&&\\
& \sigma_2\ar@{-} [dl] \ar@{-} [dr] & & \sigma_2 \ar@{-} [dl] \ar@{-}[dr] &\\
a   & &a\ \ b&&b
}
$$ 
The corresponding  sum of distances is $\var +1$ due to 
 $$
  \widehat d_{|X|}(t, s_1) =\var
 $$
 and 
 $$
  d^\ast_X (s_1, t')=1\,.
 $$
 For every other sequence the sum is at least $\var +1$: let $i$ be the largest index such that $s_i$ has label $a$ at the left-most leaf. Then $d(s_i, s_{i+1})\geq  d(a,b)=1$
 because $s_{i+1}$ has  label $b$ at the left-most leaf. Since  $n\geq 2$, and $\var$ is the smallest distance  between distinct trees, $\sum\limits_{j<n} d(s_j, s_{j+1} ) \geq \var +1$.
  This proves $d (t, t') =\var +1$.
 
 In other words, $f$ does not factorize through $T_\cv i_X$, thus $T_\cv$ is not a strongly finitary functor.
 \end{proof}

 \begin{coro}
 Strongly finitary endofunctors on $\Met$ are not closed under composition.
\end{coro}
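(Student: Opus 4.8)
The plan is to derive this from Proposition \ref{P:contra} together with the reduction of \cite{ADV} recalled in the Introduction. Suppose, towards a contradiction, that the strongly finitary endofunctors of $\Met$ were closed under composition. By definition a monad is strongly finitary precisely when its underlying endofunctor is, so this assumption entails that whenever $S$ and $T$ are strongly finitary monads the composite endofunctor $S\cdot T$ — and hence any monad carried by it — is again strongly finitary. In other words, strongly finitary monads would be closed under composition in the sense relevant to \cite{ADV}.

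Now I would invoke the result proved in \cite{ADV} and quoted in the Introduction: compositionality of strongly finitary monads forces the free-algebra monad $T_\cv$ of \emph{every} variety $\cv$ to be strongly finitary. The idea behind that reduction is that $T_\cv$ is obtained from the strongly finitary monad $T_\Sigma$ (Corollary \ref{C:free}) by imposing the defining quantitative equations, and this process is built from constructions preserving strong finitarity once composites do — essentially composition with suitable strongly finitary functors together with directed colimits, the latter preserving strong finitarity because $\Lan_K$ is an enriched left adjoint and directed colimits of endofunctors are computed pointwise. I do not reprove this here; I take it as given from \cite{ADV}.

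Combining the two: under the compositionality hypothesis, $T_\cv$ would be strongly finitary for the variety of Section 8, namely two binary operations that are $\var$-close. This directly contradicts Proposition \ref{P:contra}. Hence the hypothesis is untenable, and strongly finitary endofunctors on $\Met$ are not closed under composition. Concretely, the argument shows that \emph{some} composite of strongly finitary endofunctors occurring in the construction of that $T_\cv$ from $T_\Sigma$ must fail condition c of Proposition \ref{P:comp}, even though it need not be presentable in a simple closed form.

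I expect the main obstacle to be precisely the step I am leaving implicit: the \cite{ADV}-reduction turning compositionality into strong finitarity of every $T_\cv$. A self-contained alternative would be to exhibit two explicit strongly finitary endofunctors whose composite is not strongly finitary — for instance by presenting the monad of Section 8 (or another finitary, non-strongly-finitary monad) as such a composite — but checking condition c of Proposition \ref{P:comp} directly for such a composite is markedly less transparent than routing the argument through Proposition \ref{P:contra} and \cite{ADV}.
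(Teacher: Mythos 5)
Your argument is correct and follows essentially the same route as the paper: assume compositionality, conclude that every free-algebra monad $T_\cv$ would be strongly finitary, and contradict Proposition \ref{P:contra} via the variety of two $\var$-close binary operations. The only difference is the source of the reduction step: the paper derives ``compositionality implies every $T_\cv$ is strongly finitary'' from Theorem 43 of Bourke and Garner \cite{BG} (compositionality makes the finite discrete spaces a \emph{saturated} class of arities), whereas you cite the reduction attributed to \cite{ADV} in the Introduction --- a reference the paper itself flags as containing incomplete arguments, which is presumably why the author routes the published proof through \cite{BG} instead.
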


% Indeed, Theorem 4.36  presented in \cite{ADV} states  that from the compositionality of strongly finitary endfunctors it would follow that every quantitative monad is strongly finitary.
Indeed, suppose that a composite of strongly finitary endofunctors is always strongly finitary. Then the class of all finite discrete spaces  is saturated in the terminology of Bourke and Garner \cite{BG}. Their Theorem 43 then implies that $T_\cv$ is strongly finitary for every variety $\cv$.

% \section{Conclusions and Open Problems}

%We have characterized varieties of quantitative algebras as precisely the Eilenberg-Moore categories $\Met^{\T}$ where $\T$ is a semi-strongly finitary monad. This means  that $\T$  is a colimit of strongly finitary monads in the category $\Mnd_f(\Met)$ of enriched finitary monads.

%We have presented  a variety for which $\T$ is not a strongly finitary monad: the variety of algebras on two binary operations of distance $\varepsilon<1$. This also solves  in the negative  the open problem whether strongly finitary endofunctors  on $\Met$ are closed under composition.

%\begin{open}
%Which varieties $\cv$ of quantitative algebras yield strongly finitary monads $T_\cv$? Does this include all varieties presented by ordinary equations?
%\end{open}

%In the work  of Mardare et al, more general quantitative equations are considered: the $c$-basic equations, using  variables from metric spaces 
%of power smaller then $c$. A monadic characterization of varieties presented by 
%$\aleph_0$-basic equations is a highly  interesting problem.

\appendix
\section*{Appendix: Directed Colimits in $\Met$ and $\CMet$}

A  poset is directed if every finite subset has an upper bound. Colimits
of diagrams  with such domains are called directed colimits. The following proposition is a minor improvement of the proposition formulated in \cite{ADV} (where the proof is incomplete).
\vskip 3mm

\noindent
\textbf{A1\ Proposition} (Directed colimits in $\Met$).\label{P:d}
Let $D = (D_i)_{i\in I}$ be a directed diagram in $\Met$ with 
objects $(D_i, d_i)$ and 
connecting morphisms $f_{ij}\colon D_i \to D_j$ for $i\leq j$. A cocone $c_i \colon D_i \to C$ ($i\in I$) of $D$ is a colimit iff

{\rm (1)} It is collectively surjective: $C=\bigcup\limits_{i\in I} c_i [D_i]$.

{\rm (2)} The distance of elements $x, x'$ of $C$ is given by 
$$ 
d(x, x')=\inf_{j\geq i} d_j\big(f_{ij} (y), f_{ij}(y')\big)\,
$$
where $i\in I$ is an arbitrary index with elements $y, y'\in D_i$ such that $x=c_i(y)$ and $x'= c_i(y')$.

\vskip 2mm
\begin{proof}
(a) Sufficiency: suppose that conditions (1) and (2) hold. For every cocone $h_i \colon D_i \to X $ ($i\in I$) we are to find a morphism $h\colon C \to X$ with  $h_i = h\cdot c_i$ ($i\in I$). (Uniqueness
then follows from Item (1)). Define the value of $h$ in $x\in C$ as follows:
$$
h(x) = h_i(y) \quad \mbox{for any} \quad i\in I \quad \mbox{and} \quad y\in D_i \quad \mbox{with}\quad x=c_i(y)\,.
$$

\vskip 1mm

(a1) This is independent of the choice of $i$, since the poset $I$ is directed. We now verify independence of the choice of $y$. Suppose $x=c_i(y')$. Then we derive $c_i(y) = c_i(y')$ from Condition (2) applied to $x'=x$. We namely verify, for every $\var>0$, that
$$ d_i\big(c_i(y), c_i(y')\big) <\var\,.
$$
In fact, we know that the infimum of all $d_j\big(f_{ij}(y), f_{ij}(y')\big)$ is $d(x,x)=0$. Thus, some $j\geq i$ fulfils
$$
d_j\big(f_{ij}(y), f_{ij}(y')\big)<\var\,.
$$
Since  $c_i = c_j\cdot f_{ij}$, we get $d_i\big(c_i(y), c_i(y')\big) <\var$.

\vskip 1mm
(a2) The morphism $h$ is nonexpanding. Indeed, given $d(x, x')=\var$ in $C$, we prove that
$$
d\big(h(x), h(x')\big) <\delta +\var\quad \mbox{for each}\quad \delta>0\,.
$$
Since $I$ is directed, we have, by Condition (1), an index $i \in I$ and elements $y, y'\in D_i$ with $x=c_i(y)$ and $x'=c_i(y')$.
By Condition (2) there exists $j\geq i$ with
$$
d_j\big(f_{ij}(y), f_{ij}(y')\big)<\var+\delta\,,
$$
and we apply $c_i= c_j\cdot f_{ij}$, again, to get the desired inequality.

\vskip 1mm
(a3) The equality $h_i = h\cdot c_i$ is clear.

\vskip 2mm
(b) Necessity: suppose that the cocone $(c_i)$ is a colimit. We verify Conditions (1) and (2). The metric of $C$ is denoted by $d_C$.

\vskip 1mm
(b1) Denote by $m\colon C'\hookrightarrow C$ the subspace of $C$ on the union of all $c_i [D_i]$. Then we have nonexpanding maps $c_i' \colon D_i \to C'$ ($i\in I$) forming a cocone of $D$ with $c_i = m\cdot c_i'$. Let $h\colon C \to C'$ be the unique morphism with $c_i'= h\cdot c_i$ ($i\in I$). Then $m\cdot h\cdot c_i = c_i$  ($i\in I$) implies $m\cdot h=\id$, thus, $C'=C$.

\vskip 1mm
(b2) To verify Condition (2), we denote the infimum in it by $\bar d(x, x')$, and prove in (b3) that $\bar d$ is well defined and  forms a metric.
This concludes the proof: we derive that $d_C=\bar d$. Indeed, $d_C\leq \bar d$ follows from $c_i=c_j\cdot f_{ij}$:
\begin{align*}
\bar d (x, x')&= d_C\big( c_i(y), c_i(y')\big)\\
&= d_C\big(c_j\cdot f_{ij}(y), c_j\cdot  f_{ij}(y')\big)\\
&\leq d_j\big(f_{ij}(y), f_{ij}(y')\big)\,.
\end{align*}
To verify $d_C\geq \bar d$, it is sufficient to use that each $c_i$ is nonexpanding with respect to $\bar d$:
\begin {align*}
d_i(y, y') &= d_i\big(f_{ii}(y), f_{ii}(y')\big)\qquad f_{ii}=\id\\
&\geq  \inf_{j\geq i} d_j\big(f_{ij}(y), f_{ij}(y')\big)\\
&= d_C\big(c_i(y), c_i(y')\big)\,.
\end{align*}
Thus, we have a unique morphism $h\colon (C, d) \to (C, \bar d)$ with $h\cdot c_i = c_i$ ($i\in I$). By Condition (1), $h=\id$, and $d_C\geq \bar d$.

\vskip 1mm
(b3) We now prove the promised facts about $\bar d$.

(i) $\bar d(x, x')$ is independent of choice of $i$ and $y, y'\in D_i$. Since $I$ is directed, the independence of $i\in I$  is clear. We thus just need to prove that, given $z, z'\in D_i$ with $c_i(z)=x= c_i(y)$ and $c_i(z') = x'=c_i(z')$, then the two corresponding infima are equal. By symmetry, we only show that
$$
\inf_{j\geq i} d_j\big(f_{ij}(y), f_{ij}(y')\big)\leq \inf_{j\geq i} d_j\big(f_{ij}(z), f_{ij}(z')\big)\,.
$$
(The same running index $j$ can be used on both sides since $I$ is directed.) We again verify that for every $\var>0$ the inequality with the righ-hand side enlarged by $\var$ holds. From $c_i(z) = c_i(y)$,  Condition (2) yields an index $j'\geq i$ with
$$
d_{j'} \big(f_{ij'}(y), f_{ij'}(z)\big) <\var/2\,.
$$
Analogously for $z'$ and $y'$. Moreover, we can 
 assume $j'=j$, since the poset $I$ is directed. From the triangle inequality in $D_j$ we obtain
 \begin{align*}
&d_j \big(f_{ij}(y), f_{ij}(y')\big)\\
\leq&\  d_j\big(f_{ij}(y), f_{ij}(z)\big)+ d_j\big(f_{ij}(z), f_{ij}(z')\big)
+ d_j\big(f_{ij}(z'), f_{ij}(y')\big)\\
<&\ \var/2 + d_j \big(f_{ij}(z), f_{ij}(z')\big)+ \var/2
\end{align*}
as desired.

(ii) The function $\bar d$ is clearly symmetric, and it fulfils $\bar d(x,x)=0$. Since $\bar d\geq  d_C$,  for all $x\ne x'$ we have $\bar d(x, x')>0$.

It remains to prove the triangle inequality for $\bar d$. Given $x, x', x'' $ in $C$, we can  find $i\in I$ containing correponding elements $y, y', y''$ in  $D_i$. Since a directed infimum of sums (of reals) equals the sum of the corresponding infima, we get
\begin{align*}
\bar d(x, x') + \bar d(x', x'') &= \inf_{j\geq i} \big\{ d_j \big(f_{ij}(y), f_{ij}(y')+ d_j\big(f_{ij}(y'), f_{ij}(y'')\big)\big\}\\
&\geq \inf_{j\geq i} d_j\big(f_{ij}(y), f_{ij}(y')\big)\\
&= d(x, x'')\,.
\end{align*}
\end{proof}

\vskip 2mm
\noindent 
\textbf{A2\ Corollary} (Directed colimits in $\CMet$).\label{P:dir}
A cocone $c_i\colon D_i \to C$  ($i\in I$) of a directed diagram in $\CMet$ is a colimit iff

{\rm (1)} It is collectively dense: $C =\overline{ \bigcup\limits_{i\in I} c_i [D_i]}$.

{\rm (2)} The metric of $C$ is given by the formula (2) of Proposition  A1.

\begin{proof}

(a) Sufficiency. If Conditions (1) and (2) hold, then for $C' =\bigcup\limits_{i\in I} c_i [D_i]$, a dense subspace of $C$, we restrict the maps $c_i$ to nonexpanding maps $c_i' \colon D_i\to C'$. The resulting cocone (in $\Met$) is a colimit of $D$ in $\Met$ due to Proposition A1. By Remark \ref{R:Cauchy}, colimits in $\CMet$ are obtained by applying the Cauchy completion to the corresponding colimits in $\Met$. Since $C$ is complete, it is the Cauchy completion of $C'$. Thus, we conclude that $(c_i)$ is a colimit cocone in $\CMet$.

(b) Necessity. Given a colimit cone $c_i \colon D_i \to C$ ($i\in I$) in $\CMet$, and the corresponding colimit cocone
 $c_i' \colon D_i \to C'$ ($i\in I$) in $\Met$, then the canonical morphism from $C'$ to $C$ can be chosen to be an embedding of a dense subspace (Remark \ref{R:Cauchy}). Thus, $C= (C')^*.$ 

 From the fact that the cocone $(c'_i)$ satisfies (1) and (2) of Proposition A1, we conclude that  the above conditions (1) and (2)  hold for the cocone $(c_i)$.
\end{proof}

\vskip 2mm
\noindent 
\textbf{A3\ Corollary.}\label{C:finite} 
Every space $X$ in $\Met$ or $\CMet$ is the directed colimit of the diagram $D_X$ of all of its finite subspaces.

\vskip 2mm
That is, the  objects of $D_X$ are the finite subspaces, and morphisms $f\colon A\to B$ are the inclusion maps (whenever $A\subseteq B$). The colimit cocone consists of the inclusion maps into $X$. The verification  of the properties (1) and (2) above is easy.

\end{document}